\title{\large{\textbf{HIGH-DIMENSIONAL ASYMPTOTICS FOR PERCOLATION OF GAUSSIAN FREE FIELD LEVEL SETS}}}
\date{}
\numberwithin{equation}{section}
\newtheorem{theorem}{Theorem}[section]
\newtheorem{lemma}[theorem]{Lemma}
\newtheorem{proposition}[theorem]{Proposition}
\newtheorem{corollary}[theorem]{Corollary}
\theoremstyle{remark}
\newtheorem*{proof2}{Proof of Theorem \ref{T:ASYMPTOTICS_UB}}
\newtheorem*{proof3}{Proof of Lemma \ref{L:LOCAL_GLOBAL_q}}
\newtheorem*{proof4}{Proof of Lemma \ref{L:subt_comp_dom}}
\newtheorem*{proof5}{Proof of Lemma \ref{L:subst_comp_gwbound}}
\newtheorem*{proof6}{Proof of Lemma \ref{L:giant_event_H}}
\newtheorem*{proof7}{Proof of Theorem \ref{T:ASYMPTOTICS_LB}}
\newtheorem*{proof8}{Proof of Theorem \ref{T:ASYMPTOTICS_DENSITY}}
\newtheorem*{proof9}{Proof of Lemma \ref{L:MASTER_FORMULA_BD}}
\theoremstyle{definition}
\newtheorem{remark}[theorem]{Remark}
\begin{document}

\maketitle

\begin{center}
\vspace{-1cm}
Alexander Drewitz\footnote{\noindent Department of Mathematics, Columbia University, 2990 Broadway, New York City, NY 10027, USA. E-mail: drewitz@math.columbia.edu.} and Pierre-Fran\c cois Rodriguez\footnote{\noindent Departement Mathematik, ETH Z\"urich, R\"amistrasse 101, CH-8092 Z\"urich, Switzerland. E-mail: pierre.rodriguez@math.ethz.ch. This research was supported in part by the grant ERC-2009-AdG 245728-RWPERCRI.} 

\end{center}
\vspace{0.3cm}
\begin{abstract}
\centering
\begin{minipage}{0.8\textwidth}
We consider the Gaussian free field on $\mathbb{Z}^d$, $d \geq 3$, and prove that the critical density for  percolation of its level sets behaves like $1/d^{1 + o(1)}$ as $d$ tends to infinity. Our proof gives the principal asymptotic behavior of the corresponding critical level $h_*(d)$. Moreover, it shows that a related parameter $h_{**}(d) \geq h_*(d)$ introduced by Rodriguez and Sznitman in \cite{RS} is in fact asymptotically equivalent to $h_*(d)$.
\end{minipage}
\end{abstract}
\thispagestyle{empty}


\newpage
\mbox{}
\thispagestyle{empty}
\newpage

\section{Introduction}

When studying the statistical mechanics of random interfaces which typically arise between coexisting phases of a ($d+1$-dimensional) physical system in equilibrium, one often considers so-called \textit{effective} models, which aim at describing the $d$-dimensional surface \textit{itself}, free from its surroundings. Arguably the most notorious example in this class is the massless harmonic crystal, or Gaussian free field (precise definitions will follow, see \eqref{phi} below). A natural approach in trying to gain some insight into the geometry of this field is to inquire about its level sets, say, above a given height $h \in \mathbb{R}$. In case the underlying space is the cubic lattice $\mathbb{Z}^d$, with $d \geq 3$, and due to the presence of strong correlations (the susceptibility is infinite), this gives rise to an interesting percolation model, which was originally introduced by Lebowitz and Saleur in \cite{LS}, and has since then been investigated in \cite{BLM}, \cite{DRS}, \cite{Ga}, \cite{Ma}, \cite{RP} and \cite{RS}, see also \cite{AS}, \cite{MS} for related results.

Only recently has it been shown in \cite{RS} that the associated phase transition is non-trivial in \textit{all} dimensions $d \geq 3$ (partial results were already obtained in \cite{BLM} and \cite{Ga}). Our main focus in the present work is to examine the limiting behavior of certain critical parameters associated to this transition in high dimension. Of fundamental importance in this context is the heuristic principle by which this percolation model ought to fall in the ``domain of attraction'' of a corresponding model on the $(2d)$-regular tree. Our results indicate that this is indeed the case, and in fact, this paradigm permeates more or less explicitly many of the proofs below. Ideally, we would also like to compare these results with corresponding ones in the tree-case \textit{directly}. We hope to return to this point elsewhere.

A first step in the direction of high-dimensional asymptotics is given by Theorem 3.3 of \nolinebreak \cite{RS}, which asserts that the critical height $h_*(d)$ for percolation of Gaussian free field level sets, defined in \eqref{h_*} below, is strictly positive when $d$ is sufficiently large. In fact, a careful inspection of the proof of this theorem yields that $h_*(d) \to \infty$ as $d\to \infty$, which amounts to saying that the critical density for this percolation model converges to $0$ as $d$ tends to infinity. We will considerably refine this result by providing the leading  asymptotic behavior of the critical density, as well as principal asymptotics for $h_*(d)$ and a related critical parameter $h_{**}(d)$ (see \eqref{h_**} below) as $d$ becomes large. Our proof follows in its broad lines the general strategy underlying similar results in Bernoulli (see \cite{Go} and \cite{ABS}, Section 4) and interlacement (see \cite{SLB} and \cite{SUB}) percolation. However, the implementation of this program highly depends on the specific nature of the model, as it crucially relies on a precise understanding of its dependence structure in high dimension. In particular, in the present context, the random walk representation of the Gaussian free field, which will lead us to its ``perturbative''  expansion around a suitable \textit{independent} field, will play a pivotal role in allowing for a precise understanding of the \textit{local} connectivity of the level set (we will explain this in greater detail below, see the discussion around \eqref{EQ:INTRO_PERT}). Moreover, a severe technical obstruction is the absence of a BK-type inequality (companion to the long-range dependence). In the independent case, this inequality underlies the successful deployment of such elaborate tools as the lace expansion, see \cite{HS1} and \cite{Sl}, Chapter 9. In our set-up, as a partial substitute, we develop suitable decoupling inequalities, much in the spirit of \cite{R}, \cite{RS}, with the notable difference that they will need to work ``\textit{uniformly} well'' for all sufficiently large $d$. These inequalities will typically allow for a certain kind of (static) renormalization procedure, to which the dimension $d$ will be inextricably tied.

We now describe our results and refer to Section  \ref{NOTATION} for details. We consider the lattice \nolinebreak $\mathbb{Z}^d$, $d \geq 3$, endowed with the usual nearest-neighbor graph structure, and investigate the Gaussian free field on $\mathbb{Z}^d$, with canonical law $\mathbb{P}$ on $ \Omega = \mathbb{R}^{\mathbb{Z}^d}$ (equipped with the product $\sigma$-algebra) such that,
\begin{equation} \label{phi}
\begin{split}
&\text{under $\mathbb{P}$, the canonical field $\varphi$ = $(\varphi_x)_{x \in \mathbb{Z}^d}$ is a centered Gaussian} \\
&\text{field with covariance $\mathbb{E}[\varphi_x \varphi_y] = g (x,y)$}, \text{ for all } x,y \in \mathbb{Z}^d,
\end{split}
\end{equation}
where $g(\cdot, \cdot)$ denotes the Green function of simple random walk on $\mathbb{Z}^d$, see \eqref{GreenFunction}. Note that, for fixed $d$, this model exhibits rather strong correlations, (in particular, the susceptibility is infinite for all $d\geq 3$, cf. \eqref{1.15} below). However, as $d$ grows, $\varphi$ ``approaches'' an independent field, in a sense to be made precise below. 

For any \textit{level} $h \in \mathbb{R}$, we introduce the random subset of $\mathbb{Z}^d$
\begin{equation} \label{Ephi}
E_{\varphi}^{\geq h}  = \{ x \in \mathbb{Z}^d ; \; \varphi_x \geq h \},
\end{equation}
often referred to as the \textit{excursion set} (or \textit{level set}) of the field $\varphi$ above height $h$. In order to study its percolative properties, we set $\eta(h) = \mathbb{P}[0 \stackrel{\geq h}{\longleftrightarrow} \infty]$, the probability that the origin lies in an infinite cluster (i.e. connected component) of $E_{\varphi}^{\geq h}$. The function $\eta(\, \cdot \, )$ being decreasing, we define the critical parameter for level-set percolation as
\begin{equation}\label{h_*}
h_*(d) = \inf \{ h \in \mathbb{R} \, ; \, \eta(h) =0 \}
\end{equation}
(with the convention $\inf \emptyset = \infty$). Following $(0.6)$ of \cite{RS}, we also introduce a second critical point
\begin{equation} \label{h_**}
h_{**}(d) = \inf \big\{ h \in \mathbb{R} \, ; \,  \lim_{L\to \infty} \mathbb{P} [ B(0,L) \stackrel{\geq h}{\longleftrightarrow} S(0, 2L) ] = 0 \big\}, 
\end{equation}
where the event $\{ B(0,L) \stackrel{\geq h}{\longleftrightarrow} S(0, 2L) \}$ refers to the existence of a nearest-neighbor path in $E_\varphi^{\geq h}$ connecting $B(0,L)$, the ball of radius $L$ around $0$ in the $\ell^\infty$-norm, to $S(0, 2L)$, the $\ell^\infty$-sphere of radius $2L$ around $0$ (in fact \eqref{h_**} does not exactly coincide with (0.6) in \cite{RS}, which requires the relevant probability to decay at least polynomially in $L$; the two are in fact equivalent, and \eqref{h_**} can be further weakened, see \cite{RP}, Theorem 2.1). The definitions \eqref{h_*} and \eqref{h_**} immediately yield that $h_*(d) \leq h_{**}(d)$ for all $d \geq 3$. It is presently known that 
\begin{equation} \label{KNOWN_RESULTS}
 0 \leq h_{*}(d) \text{ and } h_{**}(d) < \infty, \text{ for all $d \geq 3$},
\end{equation}
which implies that percolation of $E_{\varphi}^{\geq h}$ exhibits a non-trivial phase transition (see \cite{BLM}, Corollary 2 for the former and \cite{RS}, Theorem 2.6 for the latter result in \eqref{KNOWN_RESULTS}; see also Theorem 3 in \cite{BLM} for a proof of $h_*(3) < \infty$). In particular, for all $h > h_*$, $E_{\varphi}^{\geq h}$ only contains finite clusters, and a (unique, see \cite{RS}, Remark 1.6) infinite cluster for all $h< h_*$. The parameter $h_{**}$ is an important quantity because it characterizes a \textit{strongly} subcritical regime. For all $h> h_{**}$ and $d \geq 3$, the probability $\mathbb{P}[0 \stackrel{\geq h }{\longleftrightarrow} S(0,L)]$ decays exponentially in $L$ as $L \to \infty$ (with logarithmic corrections when $d=3$), as follows from Theorem 2.1. in \cite{RP}. Moreover, as mentioned in the expository paragraph (see \cite{RS}, Theorem 3.3),
\begin{equation}\label{KNOWN_RESULTS2}
\text{ $h_*(d)$ is strictly positive when $d$ is large enough.}
\end{equation}
Our main goal in this paper is to prove the following asymptotic result concerning the critical density of this percolation model.
\begin{theorem} \label{T:ASYMPTOTICS_DENSITY}
\begin{equation} \label{EQ:ASYMPTOTICS_DENSITY}
\mathbb{P}[\varphi_0 \geq h_*(d)] =\frac{1}{d^{1+o(1)}}, \textnormal{ as $d \to \infty$}.
\end{equation}
\end{theorem}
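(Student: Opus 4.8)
The plan is to recognize \eqref{EQ:ASYMPTOTICS_DENSITY} as a two-sided asymptotic estimate for $h_*(d)$ in disguise, to reduce it to such an estimate by an elementary Gaussian computation, and then to obtain the estimate by combining a suitable \emph{upper} bound on $h_{**}(d)$ with a matching \emph{lower} bound on $h_*(d)$. By \eqref{phi}, $\varphi_0$ is a centered Gaussian with variance $\sigma_d^2 = g(0,0)$, and since the return probability of simple random walk on $\mathbb{Z}^d$ tends to $0$ as $d \to \infty$, one has $\sigma_d^2 \to 1$. The standard Gaussian tail asymptotics then give, for any $h = h(d) \to \infty$,
\begin{equation*}
-\log \mathbb{P}[\varphi_0 \geq h] \;=\; \frac{h^2}{2\sigma_d^2}\,\big(1+o(1)\big) \;=\; \frac{h^2}{2}\,\big(1+o(1)\big), \qquad d \to \infty,
\end{equation*}
so that \eqref{EQ:ASYMPTOTICS_DENSITY} is equivalent to the statement that $h_*(d) = \sqrt{2\log d}\,(1+o(1))$ as $d \to \infty$. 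Since $h_*(d) \leq h_{**}(d)$, it suffices to show: (i) for every $\varepsilon > 0$ and all large $d$, $E_{\varphi}^{\geq h}$ (see \eqref{Ephi}) percolates at level $h = (1-\varepsilon)\sqrt{2\log d}$, so that $h_*(d) \geq (1-\varepsilon)\sqrt{2\log d}$; and (ii) for every $\varepsilon > 0$ and all large $d$, $\lim_{L\to\infty}\mathbb{P}[B(0,L)\stackrel{\geq h}{\longleftrightarrow} S(0,2L)] = 0$ at level $h = (1+\varepsilon)\sqrt{2\log d}$, so that $h_{**}(d) \leq (1+\varepsilon)\sqrt{2\log d}$.

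For (ii), fix $\varepsilon > 0$ and put $h = (1+\varepsilon)\sqrt{2\log d}$; the single-site density is then $p_h := \mathbb{P}[\varphi_0 \geq h] = d^{-(1+\varepsilon)^2+o(1)}$, so that $2d\,p_h = 2\,d^{\,1-(1+\varepsilon)^2+o(1)} \to 0$. Under the guiding heuristic that level-set percolation lies in the domain of attraction of site percolation on the $(2d)$-regular tree, whose critical density is of order $1/(2d)$, this says that we sit comfortably in the subcritical regime. To make this rigorous, the plan is to use the random-walk representation of $\varphi$ to expand the field, inside a box whose side-length is a suitable power of $d$, around an \emph{independent} field plus a perturbative correction controlled by a negative power of $d$ (the expansion discussed in the introduction); the correction is small enough that the local connectivity of $E_{\varphi}^{\geq h}$ in such a box is stochastically dominated by a subcritical branching process. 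One then propagates this local subcriticality to all scales via a static renormalization built on decoupling inequalities in the spirit of \cite{R} and \cite{RS}, but designed to work \emph{uniformly in $d$}, which yields the decay in (ii); letting $\varepsilon \downarrow 0$ gives $h_{**}(d) \leq (1+o(1))\sqrt{2\log d}$.

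For (i), fix $\varepsilon > 0$ and put $h = (1-\varepsilon)\sqrt{2\log d}$; now $2d\,p_h = 2\,d^{\,1-(1-\varepsilon)^2+o(1)} \to \infty$, so the comparison with the $(2d)$-regular tree is supercritical, and the goal is to produce an infinite cluster in $E_{\varphi}^{\geq h}$ with positive $\mathbb{P}$-probability. Following the template of high-dimensional Bernoulli and interlacement percolation (\cite{Go}, \cite{ABS}, \cite{SLB}, \cite{SUB}), the approach is again to use the perturbative expansion around an independent field to show that, locally, $E_{\varphi}^{\geq h}$ dominates a supercritical percolation/branching structure, and then to run a multiscale renormalization whose parameters are tied to $d$ in order to patch these local pieces into an unbounded component, the missing BK-type inequality being replaced throughout by the uniform decoupling inequalities. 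Combining (i) and (ii) gives $h_*(d) = \sqrt{2\log d}\,(1+o(1))$, hence \eqref{EQ:ASYMPTOTICS_DENSITY}; since $h_*(d)$ and $h_{**}(d)$ are both squeezed between $(1-\varepsilon)\sqrt{2\log d}$ and $(1+\varepsilon)\sqrt{2\log d}$, one moreover obtains $h_{**}(d) \sim h_*(d)$, as claimed in the abstract.

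The reduction in the first paragraph is routine; the real obstacle is to make the comparison with an independent field quantitatively \emph{uniform in $d$}. Two calibrations are delicate. On the subcritical side, one must control the perturbative correction to $\varphi$ finely enough that, at level $(1+\varepsilon)\sqrt{2\log d}$, it cannot push the local cluster out of the subcritical regime permitted by the minuscule density budget $p_h \ll 1/(2d)$. On the supercritical side, the renormalization must be run at a density only \emph{polynomially} above criticality, so its geometric parameters and the decoupling inequalities feeding it have to be chosen in a $d$-dependent yet mutually compatible way, with essentially no slack for sprinkling. It is these two points, rather than the Gaussian bookkeeping, where the bulk of the work lies.
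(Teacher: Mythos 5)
Your proposal is correct and takes essentially the same approach as the paper: you reduce \eqref{EQ:ASYMPTOTICS_DENSITY} via the elementary Gaussian tail estimate \eqref{GRV_basic_estimate} (using $g(0)\to 1$) to the statement $h_*(d) = (1+o(1))\sqrt{2\log d}$, and then obtain this by squeezing $h_*$ between the lower bound of Theorem~\ref{T:ASYMPTOTICS_LB} and the upper bound of Theorem~\ref{T:ASYMPTOTICS_UB} on $h_{**}\geq h_*$ — which is exactly the paper's one-line derivation in the proof of Theorem~\ref{T:ASYMPTOTICS_DENSITY}. Your sketches of how (i) and (ii) are established also track the paper's Sections~\ref{S:UPPER_BOUND} and \ref{S:LOWER_BOUND} in broad outline, with only minor inaccuracies of emphasis (e.g., the Galton--Watson domination is used on the supercritical side in the paper, not the subcritical one, and the lower bound also crucially relies on hypercube isoperimetry).
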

\noindent In fact, Theorem \ref{T:ASYMPTOTICS_DENSITY} will be an easy consequence of the following two results regarding the principal asymptotics of the critical parameters $h_*$ and $h_{**}$ in high dimension. For future reference, we let
\begin{equation} \label{EQ:h_as}
h_{\text{as}} (d) = \sqrt{2 g(0) \log d},
\end{equation}
where $g(0)$ refers to the Green function at the origin, cf. \eqref{GreenFunction} below. We will show the following.
\begin{theorem} \label{T:ASYMPTOTICS_UB} (Upper bound) 
\begin{equation}
\limsup_{d \to \infty} h_{**}(d)/ h_{\textnormal{as}} (d) \leq 1. \label{EQ:ASYMPTOTICS_UB} 
\end{equation}
\end{theorem}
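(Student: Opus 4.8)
\emph{Proof idea.} The plan is to show that for every $\varepsilon>0$ there is $d_0(\varepsilon)$ such that, for all $d\geq d_0(\varepsilon)$, the level $h=h(d,\varepsilon):=(1+\varepsilon)\,h_{\textnormal{as}}(d)$ satisfies $\lim_{L\to\infty}\mathbb{P}[B(0,L)\stackrel{\geq h}{\longleftrightarrow}S(0,2L)]=0$. By \eqref{h_**} this gives $h_{**}(d)\leq(1+\varepsilon)h_{\textnormal{as}}(d)$ for $d\geq d_0(\varepsilon)$, hence $\limsup_{d\to\infty}h_{**}(d)/h_{\textnormal{as}}(d)\leq 1+\varepsilon$, and letting $\varepsilon\downarrow 0$ yields \eqref{EQ:ASYMPTOTICS_UB}. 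The guiding heuristic is the density estimate: since $\varphi_0\sim\mathcal{N}(0,g(0))$ and $h=(1+\varepsilon)\sqrt{2g(0)\log d}$, the standard Gaussian tail bound gives $\mathbb{P}[\varphi_0\geq h]=\mathbb{P}\big[\varphi_0/\sqrt{g(0)}\geq(1+\varepsilon)\sqrt{2\log d}\,\big]=d^{-(1+\varepsilon)^2+o(1)}$, which is $o(1/d)$ and thus, morally, well below the reciprocal $1/(2d)$ of the connective constant of $\mathbb{Z}^d$.

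To make this precise I would run a static renormalization along geometric scales $L_n=\ell^{\,n}L_0$, with $\ell$ a large fixed integer and $L_0=L_0(d,\varepsilon,\ell)$ a coarsest scale chosen below. Writing $q_n(h')$ for the probability of a suitable crossing event at scale $L_n$ and level $h'$, a crossing at scale $L_{n+1}$ forces two such crossings in well-separated sub-regions at scale $L_n$; feeding this into a decoupling inequality in the spirit of \cite{R}, \cite{RS}, \cite{RP} --- but established in a form whose constants are \emph{uniform over all large $d$} --- together with a summable sequence of sprinkling parameters $(\delta_n)_{n\geq 0}$ with $\sum_n\delta_n\leq\tfrac{\varepsilon}{2}h_{\textnormal{as}}(d)$, produces a recursion of the type $q_{n+1}(h)\leq C_n\,q_n(h-\delta_n)^2+\text{(small error)}$, with $C_n$ of polynomial size in $L_{n+1}$. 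A by now standard iteration then shows that $q_n(h)\to 0$ provided the seed quantity $q_0(h_0)$, at the slightly reduced level $h_0:=h-\sum_{n\geq 0}\delta_n\geq(1+\tfrac{\varepsilon}{2})h_{\textnormal{as}}(d)$, lies below a threshold of the form $\ell^{-cd}$.

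It remains to verify this seed estimate. A crossing at the coarsest scale requires an open nearest-neighbor path of length at least $cL_0$ inside a box of side $O(L_0)$, so a union bound gives $q_0(h_0)\leq (CL_0)^{d}\sum_{k\geq cL_0}(2d)^k\,\sup_{|\gamma|=k}\mathbb{P}[\varphi_x\geq h_0\ \text{for all}\ x\in\gamma]$. The crucial input is a \emph{local connectivity estimate}
\[\mathbb{P}\big[\varphi_x\geq h_0\ \text{for all}\ x\in\gamma\big]\ \leq\ \big(d^{-(1+\varepsilon/2)^2}\,(1+o_d(1))\big)^{|\gamma|}\]
for nearest-neighbor paths $\gamma$; granted this, $2d\cdot d^{-(1+\varepsilon/2)^2}=2d^{\,1-(1+\varepsilon/2)^2}$ is $<1$ and small for $d$ large, so taking $L_0$ to be a sufficiently large multiple of $d$ (depending on $\varepsilon,\ell$) lets the geometric sum beat the entropy factor $(CL_0)^d$ and yields $q_0(h_0)\leq\ell^{-cd}$, as required.

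The main obstacle is precisely this local connectivity estimate, since the strong \emph{positive} correlations of $\varphi$ --- witnessed by the divergence of $\sum_{x}g(0,x)$ for every $d\geq 3$ --- rule out any naive factorization and render even the comparison with an independent field delicate. I would attack it through the random walk representation of the Gaussian free field, via the ``perturbative'' splitting $\varphi=\varphi'+\varphi''$ in which $\varphi'$ is distributed as the Gaussian free field killed upon exiting a suitable box (so that, for large $d$, $\varphi'$ is nearly i.i.d.\ with per-site variance tending to $1$ in the bulk of the box) and $\varphi''$ is the complementary harmonic field (whose variance is negligible away from the boundary); iterating such splittings along $\gamma$, so that every vertex is treated in the bulk of some box, reduces the estimate --- after absorbing a small sprinkle into $h_0$ --- to the essentially trivial i.i.d.\ counterpart. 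The real work, and where the argument genuinely departs from its Bernoulli and interlacement analogues, is to carry out this expansion so that every error term, the size $L_0$ of the coarsest scale, and the constants in the decoupling inequalities are all controlled \emph{uniformly} over all sufficiently large $d$.
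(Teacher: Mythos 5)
Your overall strategy mirrors the paper's: a static renormalization with $d$-uniform decoupling and sprinkling, plus a strong seed estimate at the coarsest scale obtained via the perturbative random-walk representation of the field, and you correctly identify the central difficulty (controlling all constants uniformly over large $d$). The renormalization and decoupling set-up you sketch is close to the paper's Propositions \ref{P:DEC_INEQ}--\ref{P:DEC_INEQ_PROPAGATED}, though the paper takes $l_0\gtrsim \sqrt{d}+N$ rather than a fixed integer (this growth in $l_0$ is what makes the separation estimate \eqref{distK1_K2} work once the seed boxes have $\ell^1$-radius $NL_0$, hence $\ell^2$-diameter of order $\sqrt{d}L_0$).

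The genuine gap is the seed estimate, and more specifically the \emph{form} of the local connectivity bound you posit. You claim
$\mathbb{P}[\varphi_x\geq h_0 \text{ for all }x\in\gamma]\leq\bigl(d^{-(1+\varepsilon/2)^2}(1+o_d(1))\bigr)^{|\gamma|}$
uniformly over all nearest-neighbor paths $\gamma$ and all lengths $k=|\gamma|\geq cL_0$, and you then sum the entropy $(2d)^k$ against this. This pathwise per-site bound is strictly stronger than what the paper establishes, and the conditioning shifts can accumulate badly for long paths: a self-avoiding path that first sweeps out a large portion of $S_1(x,1)$ (visiting $e_1, e_1+e_2, e_2,\dots,e_{d}$) before reaching $x$ forces $P_{x}[H_{K}<\infty]=\Omega(1)$, so the conditional probability at $x$ is $\Omega(1)$ rather than $d^{-(1+\varepsilon/2)^2}$. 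Absorbing a bounded number of such ``bad'' sites into a $(1+o_d(1))^k$ correction is delicate, and the paper deliberately avoids this by never bounding the event along an entire path. Instead, Theorem \ref{T:LOCAL_CB} considers only $N_0$ well-separated snippets of length $\lfloor\ell d\rfloor$ each, with $\ell=\ell(\varepsilon,N_0)$ chosen \emph{small} (of order $N_0^{-1/2}$, cf. \eqref{EQ:local_conn_l}); the short length keeps the within-snippet error $\zeta_i$ in Lemma \ref{L:zeta_i_bound} controllable, while the $\ell^1$-separation $\geq 2c_0d$ between snippets (estimate \eqref{dist_pi_n}) makes the cross-snippet influence exponentially small in $d$ via \eqref{c_0}. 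That choice of $\ell$, and the balancing inequality \eqref{EQ:crucial_l} between error decay $\varepsilon^2/\ell(\ell+1)$ and combinatorial entropy $\propto \ell N_0$, is the technical core of the seed estimate; your union bound over arbitrarily long paths dispenses with it but in doing so needs the unproven (and, as the accumulation issue suggests, likely much harder) uniform pathwise bound.
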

\begin{theorem} \label{T:ASYMPTOTICS_LB} (Lower bound) 
\begin{equation}
\liminf_{d\to \infty} h_{*}(d)/ h_{\textnormal{as}} (d) \geq 1. \label{EQ:ASYMPTOTICS_LB}
\end{equation}
Moreover, for all $\varepsilon > 0$, there exists a finite positive constant $c(\varepsilon)$ such that for all $d \geq c(\varepsilon)$,
\begin{equation} \label{EQ:ASYMPTOTICS_LB_COR}
\mathbb{P}[E_\varphi^{\geq h_{\textnormal{as}}(1-\varepsilon)} \cap ( \mathbb{H} + \mathbb{Z}^2) \textnormal{ contains an infinite cluster}]=1,
\end{equation}
where $ \mathbb{H} \stackrel{\textnormal{def.}}{=} \{ 0,1\}^d$ and $\mathbb{Z}^2$ is viewed as the subset $\mathbb{Z}^2 \times \{ 0 \}^{d-2}$ of $\mathbb{Z}^d$.
\end{theorem}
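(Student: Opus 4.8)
Here is a plan for proving Theorem \ref{T:ASYMPTOTICS_LB}.

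The first move is to deduce \eqref{EQ:ASYMPTOTICS_LB} from \eqref{EQ:ASYMPTOTICS_LB_COR}. Since $E_\varphi^{\geq h} \supseteq E_\varphi^{\geq h}\cap(\mathbb{H}+\mathbb{Z}^2)$, if the right-hand side contains an infinite cluster $\mathbb{P}$-a.s.\ then so does $E_\varphi^{\geq h}$; but $\{E_\varphi^{\geq h}\text{ has an infinite cluster}\} \subseteq \bigcup_{x\in\mathbb{Z}^d}\{x \stackrel{\geq h}{\longleftrightarrow}\infty\}$, and by translation invariance each term has probability $\eta(h)$, so $\eta(h)>0$ and hence $h \leq h_*(d)$ by \eqref{h_*} and the monotonicity of $\eta$. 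Taking $h$ to be the level $h_{\textnormal{as}}(1-\varepsilon) = (1-\varepsilon)h_{\textnormal{as}}(d)$ of \eqref{EQ:ASYMPTOTICS_LB_COR} and $d\geq c(\varepsilon)$ gives $\liminf_d h_*(d)/h_{\textnormal{as}}(d) \geq 1-\varepsilon$, and $\varepsilon\downarrow 0$ yields \eqref{EQ:ASYMPTOTICS_LB}. So it remains to prove \eqref{EQ:ASYMPTOTICS_LB_COR}.

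Fix $\varepsilon\in(0,1)$, set $h = (1-\varepsilon)h_{\textnormal{as}}(d)$ and $u(d)=\mathbb{P}[\varphi_0\geq h]$. Since $\varphi_0\sim\mathcal N(0,g(0))$, the Gaussian tail estimate and \eqref{EQ:h_as} give $u(d) = d^{-(1-\varepsilon)^2+o(1)}$, so that, crucially, $d\,u(d)\to\infty$ (this is where $(1-\varepsilon)^2<1$ enters). View the ambient graph as $\mathbb{H}+\mathbb{Z}^2 = \mathbb{Z}^2\times\{0,1\}^{d-2}$, a stack of \emph{fibres} $\{z\}\times\{0,1\}^{d-2}$, $z\in\mathbb{Z}^2$, fibres over neighbouring $z$'s being joined coordinate by coordinate. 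I would then bring in the random-walk representation of $\varphi$ and its perturbative expansion around an independent Gaussian field: because $g(x,y)\leq c\,g(0)/d$ for $x\sim y$, with further decay in $|x-y|$, on finite portions of $\mathbb{H}+\mathbb{Z}^2$ the field is close to an i.i.d.\ $\mathcal N(0,g(0))$ field, and after a small deterministic lowering of the level (a ``sprinkling'', paid for out of the slack $\varepsilon$), together with the paper's decoupling inequalities made to hold uniformly in $d$, one can transfer the relevant connectivity estimates for $E_\varphi^{\geq h}\cap(\mathbb{H}+\mathbb{Z}^2)$ to those for independent Bernoulli site percolation of density $u'(d)=u(d)^{1+o(1)}$, still with $d\,u'(d)\to\infty$.

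For the latter model I would invoke the (by now classical) fact — or a direct first/second moment argument, available since $(d-2)u'(d)\to\infty$ puts us deep in the supercritical regime — that independent site percolation on the hypercube $\{0,1\}^{d-2}$ at density $u'(d)$ has, with probability $1-q(d)$ where $q(d)\to 0$, a \emph{unique} giant cluster $\mathcal C$ with $|\mathcal C|\geq c\,u'(d)\,2^{d-2}$, every other cluster being of negligible size. Transferring back, with probability $1-q(d)$ the fibre over $z$ carries such a giant $\mathcal C_z$ of $E_\varphi^{\geq h}$. Moreover, for neighbouring $z\sim z'$, a second-moment computation shows that the (nearly independent) events $\{(z,y)\in\mathcal C_z\}$ and $\{(z',y)\in\mathcal C_{z'}\}$ hold simultaneously for some $y\in\{0,1\}^{d-2}$ — their joint probability per $y$ being $\gtrsim u'(d)^2$ and $u'(d)^2 2^{d-2}\to\infty$ — and then $(z,y)\sim(z',y)$ both lie in $E_\varphi^{\geq h}$, joining $\mathcal C_z$ to $\mathcal C_{z'}$ inside $\mathbb{H}+\mathbb{Z}^2$. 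Declare an edge $\{z,z'\}$ of $\mathbb{Z}^2$ \emph{good} if both incident fibres have the unique-giant structure and these giants are joined in the slab; then $\mathbb{P}[\{z,z'\}\text{ good}]\geq 1-q(d)\to 1$, and good edges far apart in $\mathbb{Z}^2$ are nearly independent by the decoupling inequalities. Hence for $d$ large, $q(d)<1-p_c^{\textnormal{bond}}(\mathbb{Z}^2)=1/2$, so there is an infinite cluster of good edges $\mathbb{P}$-a.s.; since the unique giant $\mathcal C_z$ is shared by the goodness conditions of all good edges at $z$, the union of the $\mathcal C_z$ along such an infinite cluster is an infinite cluster of $E_\varphi^{\geq h}\cap(\mathbb{H}+\mathbb{Z}^2)$. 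Choosing $c(\varepsilon)$ large enough, this proves \eqref{EQ:ASYMPTOTICS_LB_COR}.

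The step I expect to be hardest is the transfer/comparison: controlling the near-independence of $\varphi$ — and the companion decoupling inequalities — \emph{uniformly in $d$} on $\mathbb{H}+\mathbb{Z}^2$, whose hypercube factor is not ``thin'' ($2^{d-2}$ vertices, degree $\sim d$), so that no crude union bound over the perturbative correction is available. Handling this requires the precise high-dimensional asymptotics of the Green function $g(\cdot,\cdot)$ and a renormalization scheme in which the relevant scales are tuned to the dimension $d$.
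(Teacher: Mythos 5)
Your high-level architecture resembles the paper's: view $\mathbb{H}+\mathbb{Z}^2$ as a family of hypercubes indexed by $\mathbb{Z}^2$, build a giant cluster in each, join adjacent giants, and percolate the resulting pattern in $\mathbb{Z}^2$; and the observation that $d\,\mathbb{P}[\varphi_0\geq h_{\text{as}}(1-\varepsilon)]\to\infty$ is indeed the right starting point. But two load-bearing steps are papered over, and they are precisely the steps where the paper invests its effort. First, the reduction ``$q(d)<1-p_c^{\text{bond}}(\mathbb{Z}^2)$, hence an a.s.\ infinite cluster of good edges'' is exactly what the paper explicitly states does \emph{not} follow from standard stochastic-domination arguments (cf.~\cite{Scho}), on account of the long-range correlations of $\varphi$; knowing that good events at large separation are nearly independent does not by itself yield a domination of the good/bad field by a supercritical Bernoulli field on $\mathbb{Z}^2$. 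The paper instead proves the finite-size criterion (Theorem~\ref{T:LOCAL_GLOBAL}) through a dedicated two-dimensional renormalization with scales tuned to $d$, a decoupling inequality with explicit sprinkling $\delta_n$, and a planar-duality contour argument; nothing short of a quantitative multiscale scheme produces the implication you assert.

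Second, and more fundamentally, you propose a wholesale transfer of $E_\varphi^{\geq h}\cap\mathbb{H}$ to i.i.d.\ Bernoulli site percolation on the hypercube at density $u'(d)=u(d)^{1+o(1)}$, then cite a unique-giant result and a second-moment joining. That transfer is never established, and indeed it is the obstacle you yourself flag as hardest: the perturbative representation \eqref{EQ:INTRO_PERT} accrues an error that cannot be controlled uniformly over the $2^d$ vertices of $\mathbb{H}$, so a union bound is useless and no domination by Bernoulli on all of $\mathbb{H}$ is available. The paper circumvents this. Lemma~\ref{L:subt_comp_dom} dominates $E_\varphi^{\geq h}\cap\mathbb{T}$ only along a deterministic tree $\mathbb{T}\subset\mathbb{H}$ of bounded depth $b(\varepsilon)$, where the return-probability bound \eqref{return_probas1} keeps the perturbative correction of order $1/d$ per step and the conditioning set small; this yields only \emph{substantial} components (of size polynomial in $d$), not a giant. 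The promotion to a giant, and the connection of neighbouring giants, is then done by sprinkling and isoperimetry (Theorem~\ref{T:giant_comp}, via~\cite{BL} and~\cite{SLB}, in the spirit of~\cite{AKS}) directly on the Gaussian field, conditioning on local configurations and using the $\mathcal{G}_U$ events to decouple a single edge. Your ``direct first/second moment argument'' and the second-moment pairing of $\{(z,y)\in\mathcal{C}_z\}$ with $\{(z',y)\in\mathcal{C}_{z'}\}$ bundle this entire construction (including the correlations between membership-in-giant events) into a gesture. In short, the scaffolding is right, but both the local-to-global reduction and the construction of the giant require genuine arguments that the proposal does not supply, and the paper actually follows a more circumspect route in both places.
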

\noindent(N.B.: the presence of the factor $\sqrt{g(0)}$ in the definition of $h_{\text{as}} (d) $ is for esthetic purposes only, since $g(0) \to 1$ as $d\to \infty$, cf. \eqref{g(0)} below).

Before describing our methods, we make a few remarks concerning these results and the heuristics lurking behind them. As alluded to above, a recurrent theme in the proofs below will be that, in high dimensions, the free field at \textit{small} scales (at which the geometry felt by the random walk is roughly tree-like) can be viewed as a \textit{perturbation} of an independent Gaussian field, and a considerable effort will go into understanding the precise effect of this perturbation on the connectivity of the level sets around the asymptotic value $h_{\text{as}}$ (see the comments below, and the proofs of Theorems \ref{T:LOCAL_CB} and \ref{P:SUBST_COMP}). This perturbative behavior was already hinted at in the proof of \eqref{KNOWN_RESULTS2} in \cite{RS}, which was based on a decomposition of the covariance $g(x,y)$ for $x, y$ belonging to a (lower-dimensional) subspace $\mathbb{Z}^{d'}$, with $d' \ll d$, into the sum of a dominant \textit{diagonal} part with entries close to $1$ (thus inducing a field of independent Gaussians) and a ``small noise'' (cf. Lemma 3.1. of \cite{RS} for a precise statement). Admitting this (local) resemblance of $\varphi$ to an independent field, it is reasonable to compare \eqref{EQ:ASYMPTOTICS_DENSITY} to $p_c^{\text{site}}(\mathbb{Z}^d)$, the critical parameter for Bernoulli site percolation on $\mathbb{Z}^d$, which is known to be asymptotically equal to $1/2d$ as $d \to \infty$, see \cite{ABS}, \cite{BH}, \cite{Go}, \cite{Kes}.

Regarding Theorems \ref{T:ASYMPTOTICS_UB} and \ref{T:ASYMPTOTICS_LB}, using $h_* \leq h_{**}$, \eqref{EQ:ASYMPTOTICS_UB} and \eqref{EQ:ASYMPTOTICS_LB} imply that
\begin{equation*}
h_{*}(d) \sim h_{**}(d) \ (\sim h_{\text{as}} (d)), \text{ as $d \to \infty$}
\end{equation*}
(we write $f(x)\sim g(x)$ as $x\to a$ if $\lim_{x \to a} f(x)/g(x)=1$). 
It is at present an important unresolved question whether both critical parameters are actually equal (in any dimension).

\bigskip

We now comment on the proofs. The proof of the upper bound follows a strategy inspired by that used in \cite{RS} to prove finiteness of $h_{**}(d)$, for arbitrary, but \textit{fixed} dimension $d$ (see also \cite{R}, \cite{SS}, \cite{S1}, \cite{S3}, \cite{SUB} for similar ideas in the context of random interlacements). In particular, we use a variant of the renormalization scheme developed therein. However, the present task requires a scheme which works ``uniformly in $d$'' as $d$ becomes large. Using careful estimates on the behavior of the Green function of simple random walk on a high-dimensional lattice developed in \cite{SLB, SUB}, we obtain a ``decoupling inequality,'' which enables us to propagate bounds on the relevant crossing events in $E_\varphi^{ \geq h}$ (cf. the definition \eqref{h_**} of $h_{**}$), where $h=h_\text{as}(1 + \varepsilon)$ for some $\varepsilon>0$, at small scale (the so-called seed estimates) to controls of such crossing probabilities at arbitrarily large scale in $E_\varphi^{ \geq h +\varepsilon}$ (this is in fact much more than we need). A significant part of the problem is to produce sufficiently sharp seed estimates, in order to initiate the renormalization, see Remark \ref{R:renorm}, 2) below (note that, in contrast to the proof of the finiteness of $h_{**}(d)$ in \cite{RS}, which allowed one to look for a corresponding regime at arbitrary large $h$, we are now constrained to remain in the vicinity of $h_{\text{as}}$).

Obtaining the desired bounds at small scales involves controlling the probability to see a crossing in $E_\varphi^{ \geq h}$ from a given point $x$ to the boundary of the $\ell^1$-ball centered at $x$ of radius $R=c(\varepsilon)d$, see \eqref{EQ:B_n,x} and Theorem \ref{T:LOCAL_CB} below (we emphasize that the use of the $\ell^1$-norm is essential here, as this distance controls the short-range behavior of $g(\cdot)$ in high dimensions, where the tree-like nature of the lattice manifests itself, cf. \eqref{return_probas1}). This is the first instance where the aforementioned (local) perturbative expansion comes into play. In rough terms, the domain Markov property for the free field $\varphi$ allows us to ``discover'' it along any given path (in the present case, one joining $x$ to $S_1(x,R)$) ``dynamically,'' starting from a suitable \textit{independent} Gaussian field $\psi$, and introducing the required dependence at each step. Specifically, if $K= \{x_1,\dots,x_n\} \subset \mathbb{Z}^d$, $n \geq 1$, denotes the trace of the path in question, we represent
\begin{equation} \label{EQ:INTRO_PERT}
\varphi_{x_k} \; \text{``$=$''} \;   \psi_{x_k} + \text{error}(\psi_{x_1}, \dots, \psi_{x_{k-1}}  ), \text{ for all $1 \leq k \leq n$},
\end{equation}
where the error term is a suitable linear combination of $\psi_{x_1}, \dots, \psi_{x_{k-1}}$, cf. Lemma \ref{L:conds_exps2} below for a precise statement. This procedure enables us to ``pass'' from $\varphi$ to the independent field $\psi$, provided we carefully keep track of the error we make in each step.

For the lower bound, Theorem \ref{T:ASYMPTOTICS_LB}, our method mimics in its broad lines the approach of \cite{Go} and \cite{ABS}, Section 4, to the corresponding problem in Bernoulli (bond and site) percolation on $\mathbb{Z}^d$, involving ideas of \cite{AKS} from hypercube percolation (see also \cite{Kes} for a completely different solution, and \cite{SLB} for a corresponding result in interlacement percolation, following a similar spirit). The proof essentially comprises two parts, which we briefly detail. 

In the first part, we show the following finite-size criterion. Suppose we partition the set $ \mathbb{H} + \mathbb{Z}^2$ into translates of the hypercube.  Roughly speaking, we show in Theorem \ref{T:LOCAL_GLOBAL} below that if $ E_{\varphi}^{\geq h}$, with $h = h_{\text{as}}(d)(1 - \varepsilon)$, $\varepsilon > 0$,
\begin{itemize}[noitemsep]
\item[i)] contains a ``giant'' connected component in each of the sets $\mathbb{H}$ and its four neighboring translates, and
\item[ii)] all these components are connected in $ E_{\varphi}^{\geq h}$ within the union of $\mathbb{H}$ and its four neighboring translates,
\end{itemize}
with sufficiently high probability, then $ E_{\varphi}^{\geq h- \varepsilon}$ percolates (the ``additive'' sprinkling in $h$ is more than enough for the sake of proving \eqref{EQ:ASYMPTOTICS_LB}). Due to the long-range dependence, this reduction step does not follow from standard stochastic domination arguments (see for example \cite{Scho}), and Theorem \ref{T:LOCAL_GLOBAL} is established by means of a (two-dimensional) renormalization argument.

In the second (and more difficult) part of the proof of Theorem \ref{T:ASYMPTOTICS_LB}, we show that this criterion actually holds. In the independent setting of  \cite{Go} and \cite{ABS}, the validity of such a criterion is essentially guaranteed by the analysis in \cite{AKS} of (independent) percolation in the hypercube. Our procedure essentially comprises two steps. First, drawing inspiration from \nolinebreak \cite{AKS}, cf. in particular Lemma 1 therein, we grow \textit{substantial} connected components in $E_{\varphi}^{\geq h} \cap \mathbb{H}$ (with $h = h_{\text{as}}(d)(1 - \varepsilon)$), which have cardinality growing polynomially in $d$. To achieve this, we embed into $\mathbb{H}$ a deterministic $r(\varepsilon,d)$-regular tree $\mathbb{T}$, rooted at $0$, with $r(\varepsilon, d)$ comparable to $d$ for every $\varepsilon > 0$, of depth depending on $\varepsilon$ only. Here again, the ``perturbative representation'' of \eqref{EQ:INTRO_PERT} crucially enters in allowing us to compare $E_{\varphi}^{\geq h_{\text{as}}(d) (1 - \varepsilon)} \cap \mathbb{T}$ to a certain (supercritical) Galton-Watson process on the same tree in order to derive a meaningful lower bound for the probability to see a substantial component at the origin. The precise statement is the object of Theorem \ref{P:SUBST_COMP}, and an easy consequence is that most vertices in $\mathbb{H}$ are in fact either neighboring or contained in such a substantial component, see Corollary \ref{C:number_subst_comps}.

The second step then consists of gluing all substantial components within the hypercube to a giant one (i.e. a connected component whose closure in $\mathbb{H}$ contains at least $(1-d^{-2})2^d$ points), and subsequently connecting neighboring giant components, which is achieved in Theorem \nolinebreak \ref{T:giant_comp} by two successive sprinkling operations. Quantifying that it is highly unlikely for the substantial components not to merge after sprinkling requires isoperimetric considerations in $\mathbb{H}$ (as were used in the proofs of \cite{AKS}, Theorem 1 and \cite{SLB}, Theorem 4.2). However, the (wealth of) edges which are hereby deduced to be pivotal for a giant component to emerge or not, are not necessarily ``well spread-out'' within $\mathbb{H}$ and might therefore influence each other rather strongly. A slightly careful bookkeeping of this mutual influence is required in order to show that it is too costly for many pivotal sites to remain ``closed'' (cf. the proof of Lemma \ref{L:giant_event_H}). This then completes the proof of the lower bound \eqref{EQ:ASYMPTOTICS_LB}.

\bigskip

Let us now describe the organization of this article. In Section \ref{NOTATION}, we introduce some notation and review a few known results concerning simple random walk on a high-dimensional lattice and the Gaussian free field. Section \ref{S:UPPER_BOUND} is devoted to the proof of Theorem \ref{T:ASYMPTOTICS_UB}. Subsection \nolinebreak \ref{S:UB_RS} introduces the renormalization scheme. Its main result is Proposition \ref{P:DEC_INEQ}, which entails the induction step (one-step renormalization). Subsection \ref{S:UB_LCB} contains the local estimates on the connectivity of $E_\varphi^{\geq h_{\text{as}}(1+ \varepsilon)}$, $\varepsilon > 0$. The main result, Theorem \nolinebreak \ref{T:LOCAL_CB}, is of independent interest, but it is needed crucially to establish the required seed estimate which enables us to trigger the renormalization scheme. Finally, in Subsection \ref{S:DENOUEMENT}, we put these two ingredients together to complete the proof of \eqref{EQ:ASYMPTOTICS_UB}. The proof of the lower bound \eqref{EQ:ASYMPTOTICS_LB} is the object of Section \nolinebreak \ref{S:LOWER_BOUND}. First, we show the abovementioned finite-size criterion, which requires another (simpler) renormalization scheme. The main result is Theorem \ref{T:LOCAL_GLOBAL}, which can be found in Subsection \ref{S:FINITE_SIZE_CRITERION}. Having established this reduction step, we prove that most points in the hypercube are either contained or neighboring a substantial component in the level set of interest. This is done in Subsection \nolinebreak \ref{S:SUBST_COMP}, see in particular Theorem \ref{P:SUBST_COMP} and Corollary \ref{C:number_subst_comps} therein. Finally, Theorem \ref{T:giant_comp} in Subsection \nolinebreak\ref{S:CONNECTING_SUBST_COMPS} connects these substantial components, thereby completing the proof of the criterion. The lower bound \eqref{EQ:ASYMPTOTICS_LB} then follows readily by collecting the pieces, and Subsection \nolinebreak \ref{S:CONNECTING_SUBST_COMPS} also contains the proof of Theorem \ref{T:ASYMPTOTICS_DENSITY}, which follows straightforwardly from \eqref{EQ:ASYMPTOTICS_UB} and \eqref{EQ:ASYMPTOTICS_LB}.

\bigskip

We conclude this introduction with a remark concerning our convention regarding constants: we denote by $c,c',\dots$ positive constants with values changing from place to place. Numbered constants $c_0,c_1,\dots$ and $c_0', c_1', \dots$ are defined at the place they first occur within the text and remain fixed from then on until the end of the article. All constants are numerical, and their dependence on any additional parameter, including, most importantly, the dimension $d$, will always appear in the notation. The only exceptions to this rule are the last two Sections \ref{S:SUBST_COMP} and \ref{S:CONNECTING_SUBST_COMPS}, in which constants may implicitly depend on a parameter $\varepsilon > 0$. The Reader will be reminded of this exception in due time.

\section{Notation and useful results} \label{NOTATION}

In this section, we introduce some notation to be used in the sequel, collect some important estimates related to simple random walk on a high-dimensional lattice, and review a few useful facts concerning the Gaussian free field.

We denote by $\mathbb{N}= \{0,1,2,\dots\}$ the set of natural numbers, and by $\mathbb{Z}$ the set of integers. We write $\mathbb{R}$ for the set of real numbers, abbreviate $x \wedge y = \min \{x,y\}$ and $x \lor y = \max\{ x,y\}$ for any two numbers $x,y \in \mathbb{R}$, denote by $\lfloor x \rfloor = \max \{ n \in \mathbb{N}  ; \, n \leq x \}$ the integer part of $x$, for any $x \geq 0$, and let $\lceil x \rceil = \min \{ n \in \mathbb{N} \; \, n \geq x \}$. We consider the lattice $\mathbb{Z}^d$, and tacitly assume that $d \geq 3$. Given a subset $K$ of $\mathbb{Z}^d$, $K^c = \mathbb{Z}^d \setminus K$ stands for the complement of $K$ in $\mathbb{Z}^d$, and $\vert K \vert$ for the cardinality of $K$. Moreover, in writing $K\subset \subset \mathbb{Z}^d$, we mean that $K$ is a finite subset of $\mathbb{Z}^d$. Finally, we denote by $K+K' = \{x+ y \, ; \, x \in K, y \in K' \}$ the Minkowski sum of arbitrary sets $K, K' \subset \mathbb{Z}^d$.

On $\mathbb{Z}^d$, we respectively denote by $\vert \cdot \vert_1$, $\vert \cdot \vert_2$ and $\vert \cdot \vert_\infty$ the $\ell^1$, Euclidean and $\ell^\infty$-norms. The three norms are equivalent, and satisfy the relations
\begin{equation} \label{norm_equivalence}
|\cdot|_2 \leq |\cdot|_1 \leq \sqrt{d} |\cdot|_2, \text{ and }  |\cdot|_\infty \leq |\cdot|_2 \leq \sqrt{d} |\cdot|_\infty, \text{ for all $d \, (\geq 1)$. }
\end{equation}
For any $x \in \mathbb{Z}^d$, $r \geq 0$, and $p=1,2, \infty$, we let $B_p(x,r) = \{ y \in \mathbb{Z}^d  ; \ \vert y-x \vert_p \leq r \}$ and $S_p(x,r) = \{ y \in \mathbb{Z}^d  ; \ \vert y-x \vert_p = r \}$ stand for the the (closed) $\ell^p$-ball and $\ell^p$-sphere of radius $r$ centered at $x$. We also note for later purposes the following bound on the cardinality of a $d$-dimensional $\ell^1$-sphere,
\begin{equation} \label{S_1_bound}
|S_1(0,n)| \leq e^{n+2d}, \text{ for all $n \geq 0$, $d \geq 3$,}
\end{equation}
which is easily computed by considering the generating function of $|S_1(0,n)|$, see for example \cite{SUB}, Lemma 3.2 (i) for a proof. Given two arbitrary sets $K, K' \subset \mathbb{Z}^d$, we define their $\ell^p$-distance as $d_p(K,K')= \inf \{ |x-y|_p  ;  \; x \in K, \; y \in K' \}$, for $p=1,2,\infty$, and simply write $d_p(x,K')$ when $K = \{ x \}$ is a singleton.  Moreover, we denote the interior ($\ell^1$-)boundary of $K$ as $\partial_{\text{int}} K = \{ x \in K ; \; \exists y \in K^c, \; \vert y-x \vert_1 =1 \}$, the outer boundary of $K$ as $\partial K = \partial_{\text{int}} (K^c)$, and write $\overline{K}= K \cup \partial_{\text{out}}K$ for the ($\ell^1$-)closure of $K$ and $\overline{K}^{K'}= \overline{K} \cap K'$ for the relative closure of $K$ in $K'$.

We now introduce the discrete-time simple random walk on $\mathbb{Z}^d$. We endow the lattice $\mathbb{Z}^d$ with the usual nearest-neighbor graph structure, and we will frequently use $x \sim y$ instead of $|x-y|_1=1$ to denote two neighboring vertices  $x,y \in \mathbb{Z}^d$. Moreover, $x,y \in \mathbb{Z}^d$ will be called  $*$-nearest neighbors if $|x-y|_\infty=1$. A $\mathbb{Z}^d$-valued nearest-neighbor path is a (finite or infinite) sequence $(x_n)_{n}$ of vertices in $ \mathbb{Z}^d$ satisfying $x_{n+1} \sim x_n$ for all $n \geq 0$. We define its length as the number of edges it traverses. When its length is infinite, we will often use the term \textit{trajectory} instead of \textit{path}. A $*$-path is defined accordingly.  Let $W$ denote the space of nearest-neighbor trajectories, and let $\mathcal{W}$, $(X_n)_{n \geq 0}$ and $(\theta_n)_{n\geq 0}$, stand for the canonical $\sigma$-algebra, the canonical process and the canonical shifts on $W$, respectively. We write $P_x$ for the canonical law of the walk starting at $x \in \mathbb{Z}^d$ and $E_x$ for the corresponding expectation. We denote by $g(\cdot, \cdot)$ the Green function
of the walk, i.e.
\begin{equation}\label{GreenFunction}
g(x,y) = \sum_{n \geq 0} P_x [X_n = y], \text{ for } x,y \in \mathbb{Z}^d,
\end{equation}
which is finite (since $d \geq3$) and symmetric. Moreover, $g(x,y)= g(x-y,0) \stackrel{\text{def.}}{=} g(x-y)$ due to translation invariance. We further recall that (see for example \cite{L}, Theorem 1.5.4)
\begin{equation}
g(x) \sim  c(d) |x|_2^{2-d}, \text{ as } |x|_2 \to \infty, \text{ for all $d \geq 3$}   \label{1.15}. 
\end{equation}
Given $K \subset \mathbb{Z}^d$, we denote the entrance time in $K$ by $H_K = \inf \{ n\geq 0 ; X_n \in K \}$ and the hitting time of $K$ by $\widetilde{H}_K = \inf \{ n\geq 1 ; X_n \in K \}$. This allows us to define the Green function $g_{K} (\cdot, \cdot)$ killed outside $K$ as
\begin{equation}\label{GreenFunctionSubK}
g_{K} (x,y) = \sum_{n \geq 0} P_x [X_n = y, \ n < H_{K^c}],  \text{ for } x,y \in \mathbb{Z}^d,
\end{equation}
which is symmetric and vanishes if $x \notin K$ or $y \notin K$. The relation between $g$ and $g_{K}$ for any $K \subset \mathbb{Z}^d$ is given by the following formula, the proof of which is a mere application of the strong Markov property (at time $H_{K^c}$),
\begin{equation} \label{G-GsubK}
g(x,y) = g_{K} (x, y) + E_x [ H_{K^c} < \infty , \, g(X_{H_{K^c}}, y) ], \text{ for } x,y \in \mathbb{Z}^d.
\end{equation}
We now turn to a few aspects of potential theory associated to simple random walk. For any finite subset $K$ of $\mathbb{Z}^d$, we write 
\begin{equation} \label{1.10}
e_{K} (x) = P_x [\widetilde{H}_K = \infty], \text{ $x \in K$},
\end{equation}
for the equilibrium measure (or escape probability) of $K$, and
\begin{equation}\label{1.11}
\text{cap}(K) = \sum_{x \in K} e_{K} (x)
\end{equation}
for its capacity. It immediately follows from the definitions \eqref{1.10} and \eqref{1.11} that the capacity is subadditive, i.e. 
\begin{equation} \label{1.12}
\text{cap}(K\cup K') \leq \text{cap}(K) + \text{cap}(K'), \text{ for all } K,K' \subset \subset \mathbb{Z}^d,
\end{equation}
and one also easily infers that it is monotonous, i.e. that
\begin{equation}\label{1.12bis}
\text{cap}(K) \leq \text{cap}(K'), \text{ for all } K \subseteq K' \subset \subset \mathbb{Z}^d.
\end{equation}
The latter follows e.g. by observing that $\text{cap} (K)= \lim_{L \to \infty}\sum_{y\in S_2(0,L)}P_y[\widetilde{H}_K < \widetilde{H}_{S_2(0,L)}]$, which follows from \eqref{1.11} by a straightforward reversibility argument (see for example \cite{L}, Proposition 2.2.1 (a) for more details). Moreover, the entrance probability in $K$ may be expressed in terms of $e_{K}(\cdot)$ as
\begin{equation} \label{1.13}
P_x [H_K < \infty] = \sum_{y \in K} g(x,y) \cdot e_{K}(y),
\end{equation}
which is a mere consequence of the simple Markov property (see for example \cite{Sp}, Theorem 25.1, p. 300). We collect some useful estimates on these quantities in high dimension, which will be used repeatedly in the sequel. We remind the Reader of our convention regarding constants at the end of the previous section.

\begin{lemma} $(d \geq 3)$
\begin{align} 
&g(0) = 1 + \frac{1}{2d} + o(d^{-1}), \text{ as $d\to \infty$}. \label{g(0)} \\
&g(x) \leq \bigg( \frac{c_0d}{|x|_1} \bigg)^{d/2-2}, \text{ for $d\geq 5$ and $x \in \mathbb{Z}^d \setminus \{ 0 \}$}. \label{c_0} \\ 
&\sup_{|x|_1=k} P_x[\widetilde{H}_{B_1(0,k)} < \infty] \leq \frac{c(k)}{d}, \text{ for $k \geq 0$}. \label{return_probas1}\\
&g(x) \leq \bigg( \frac{c \sqrt{d}}{ |x|_2} \bigg)^{d-2}, \text{ for } |x|_2 \geq d. \label{g1} \\ 
&\text{\textnormal{cap}}(B_2(0,L)) \leq \bigg(\frac{cL}{ \sqrt{d}} \bigg)^{d-2}, \text{ for } L \geq d. \label{g2} 
\end{align}
\end{lemma}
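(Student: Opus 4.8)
The plan is to prove each of the five estimates in order, relying on standard random walk asymptotics and simple transfer arguments between norms. The key tool in several steps is the equivalence \eqref{norm_equivalence} between the $\ell^1$, $\ell^2$, $\ell^\infty$ norms together with the polynomial decay \eqref{1.15} of the Green function.

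\textbf{Proof of \eqref{g(0)}.} I would expand $g(0) = \left(\int_{(-\pi,\pi)^d} \frac{d\theta}{(2\pi)^d}\, \frac{1}{1-\hat{p}(\theta)}\right)$ where $\hat{p}(\theta) = \frac{1}{d}\sum_{j=1}^d \cos\theta_j$. Writing $g(0) = \sum_{n\geq 0} P_0[X_n = 0]$, the $n=0$ term contributes $1$, the $n=1$ term vanishes, and the $n=2$ term is $P_0[X_2=0] = 1/(2d)$; one then checks that the tail $\sum_{n\geq 3}P_0[X_n=0]$ is $o(d^{-1})$, e.g.\ from the Fourier representation by bounding $\int (1-\hat p(\theta))^{-1} - 1 - \hat p(\theta)^2$ and noting that higher return probabilities require the walk to take at least $4$ steps with cancellations of order $d^{-2}$. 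This is essentially \cite{L} or a direct computation; I expect no difficulty here.

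\textbf{Proof of \eqref{c_0} and \eqref{g1}.} These are ``$\ell^1$'' versus ``$\ell^2$'' versions of a Green function upper bound already available (in the form used in \cite{SLB, SUB}). For \eqref{g1}, the idea is to iterate the last-exit decomposition or simply use that for $|x|_2 \geq d$ one has, via \eqref{1.15} and uniform control on the constant $c(d)$ in high dimension (which tends to a finite limit, again by the local CLT of \cite{L}), $g(x) \leq (c\sqrt d/|x|_2)^{d-2}$; more robustly one writes $g(x)$ as an integral of the heat kernel $\sum_n p_n(x)$ and uses the Gaussian bound $p_n(x)\leq c\, n^{-d/2} e^{-c|x|_2^2/n}$ valid for $n \geq |x|_\infty$, optimizing over $n$. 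For \eqref{c_0}, I would transfer \eqref{g1} using $|x|_2 \geq |x|_1/\sqrt d$: when $|x|_1$ is large enough relative to $d$ this gives directly $g(x) \leq (cd/|x|_1)^{d/2-2}$; for the remaining (short) range $|x|_1$ bounded one uses $g(x)\leq g(0) \leq 2$ and absorbs it into the constant $c_0$ by choosing $c_0$ large. The main subtlety — and the step I expect to be the real obstacle — is ensuring that \emph{all} constants are genuinely numerical (independent of $d$), which forces one to use the uniform-in-$d$ heat kernel bounds rather than the dimension-dependent asymptotic constant in \eqref{1.15}; this is exactly the care taken in \cite{SLB, SUB} and I would cite those estimates.

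\textbf{Proof of \eqref{return_probas1}.} Fix $k$ and $x$ with $|x|_1 = k$. By the strong Markov property, $P_x[\widetilde H_{B_1(0,k)} < \infty] \leq \sum_{y \in B_1(0,k)} \sum_{z\sim x} \frac{1}{2d} P_z[H_{B_1(0,k)\setminus\{x\}}<\infty]$ or, more simply, one bounds $P_x[\widetilde{H}_{B_1(0,k)} < \infty]$ by first forcing a step (cost $1$), then returning to the finite set $B_1(0,k)$, whose entrance probability from a neighbor is at most $\sum_{y\in B_1(0,k)} g(x',y) e_{B_1(0,k)}(y) \leq |B_1(0,k)| \cdot \max_y g(x'-y)$. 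Since $|B_1(0,k)| \leq c(k)$ depends only on $k$, and $\max_{y: |x'-y|_1\le 2k} g(x'-y) \leq g(0) \leq 2$ is $O(1)$ but not $O(1/d)$, one instead uses that to return one must travel $\ell^1$-distance roughly $k$, and for \emph{each} fixed intermediate vertex $g(\cdot) = O(1)$ except that the first return step back to the $O(k^d)$-sized ball costs a factor $1/(2d)$ for hitting any specific neighbor; summing the $\leq 2d$ neighbors and then the $c(k)$ ball-vertices, each reached with probability $\leq g \leq 2$, gives a bound $c(k)/d$ after extracting one factor $1/(2d)$ from the mandatory first step combined with the fact that with probability $1-O(1/d)$ the walk never comes back to distance $\leq k$ (this is precisely the tree-like / transience statement that the next-neighbor structure is $(2d-1)$-ary to leading order). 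I would phrase this via \eqref{1.13}: $P_{x'}[H_{B_1(0,k)}<\infty] = \sum_{y\in B_1(0,k)} g(x',y)e_{B_1(0,k)}(y) \leq \text{cap}(B_1(0,k)) \max_{y} g(x'-y)$, and then bound $\max_y g(x'-y)$ for $y$ in the ball and $x'$ a neighbor of a boundary point: such $y$ satisfies $|x'-y|_1 \leq 2k+1$, and for $|x'-y|_1 \geq 1$ one has $g(x'-y) = O(1/d)$ because a nonzero displacement forces at least two steps, giving $P[X_n = x'-y] \leq c/d$ uniformly — this is the clean way to get the $1/d$. I expect \eqref{return_probas1} to be the most delicate of the five; the honest route is to cite the analogous statement in \cite{SUB} or \cite{RS}.

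\textbf{Proof of \eqref{g2}.} For $L\geq d$, use \eqref{1.13} at $x$ with $|x|_2 \to\infty$ or the variational characterization $\text{cap}(B_2(0,L)) = \big(\inf_{\mu} \sum_{x,y} g(x,y)\mu(x)\mu(y)\big)^{-1}$ over probability measures $\mu$ on $B_2(0,L)$; choosing $\mu$ uniform on (a nice subset of) $B_2(0,L)$ and using the lower bound $g(x,y)\geq c|x-y|_2^{2-d}$ for the energy gives $\text{cap}(B_2(0,L)) \leq (cL/\sqrt d)^{d-2}$. Alternatively, and more in keeping with the rest of the paper: $\text{cap}(B_2(0,L)) = \sum_{y} e_{B_2(0,L)}(y) \leq |S_2(0,L)|$ is far too weak, so instead I would use $P_0[H_{B_2(0,L)}<\infty] = 1$ is useless; the right identity is $1 = \lim_{|x|_2\to\infty}\frac{P_x[H_{B_2(0,L)}<\infty]}{g(x)\,\text{cap}(B_2(0,L))/(\text{const})}$ — cleaner: from \eqref{1.13}, for $|x|_2$ large, $P_x[H_{B_2(0,L)}<\infty] \leq \text{cap}(B_2(0,L))\cdot \max_{y\in B_2(0,L)} g(x-y)$, and taking $x$ with $|x|_2 = 2L$ one has $\max_y g(x-y) \geq g(L\cdot e_1) \gtrsim (cL/\sqrt d)^{2-d}$ while $P_x[H_{B_2(0,L)}<\infty]\leq 1$, yielding $\text{cap}(B_2(0,L)) \leq (cL/\sqrt d)^{d-2}$ once one also has a matching \emph{lower} bound on $g$ at scale $L\geq d$, which follows from the local CLT uniformly in $d$ (again \cite{SLB, SUB}). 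I would present this last step as a short consequence of \eqref{g1} (giving the upper bound on $g$) combined with the corresponding uniform lower bound, and cite \cite{SUB}, from which \eqref{g1} and \eqref{g2} are taken essentially verbatim.

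In summary, \eqref{g(0)} is a direct return-probability computation; \eqref{c_0}, \eqref{g1}, \eqref{g2} are uniform-in-$d$ Green function and capacity bounds imported (with norm conversions via \eqref{norm_equivalence}) from \cite{SLB, SUB}; and \eqref{return_probas1} is the genuinely model-specific ``tree-like escape'' estimate, proved by extracting one factor $1/(2d)$ from the forced first step via the identity \eqref{1.13} and the observation that a nonzero lattice displacement costs at least one factor $O(1/d)$ in the $n$-step transition probability. The uniformity of all constants in $d$ is the recurring point requiring care throughout.
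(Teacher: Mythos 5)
The paper offers no proof of its own; all five estimates are imported verbatim by citation (\eqref{g(0)} from Montroll \cite{Mo}, \eqref{c_0} and \eqref{return_probas1} from \cite{SLB} Lemma~1.2, \eqref{g1} and \eqref{g2} from \cite{SUB} Lemma~1.1 and~(1.22)). Your eventual recourse to citing \cite{SLB,SUB}, and your derivation of \eqref{g(0)} as a two-step return computation plus $o(d^{-1})$ tail, aligns with this. Two caveats on the sketches that precede your citations: your proposal to deduce \eqref{c_0} from \eqref{g1} via $|x|_2\geq |x|_1/\sqrt d$ in fact yields exponent $d-2$, not $d/2-2$ (a stronger bound in the regime where it applies, so not literally the stated inequality); more importantly, \eqref{g1} requires $|x|_2\geq d$, which leaves the intermediate range $c_0 d< |x|_1 < d^{3/2}$ uncovered (there the target is nontrivial, since $(c_0d/|x|_1)^{d/2-2}<1$, yet $|x|_2$ can be well below $d$). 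So \eqref{c_0} genuinely needs its own argument — which is precisely what \cite{SLB} Lemma~1.2 supplies — rather than being a norm-conversion corollary of \eqref{g1}. Your intuition for \eqref{return_probas1} (extracting a $1/d$ factor from a forced nonzero displacement, using \eqref{1.13} and finiteness of $\mathrm{cap}(B_1(0,k))$) is the right heuristic and matches the argument in \cite{SLB}.
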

\begin{proof}
For \eqref{g(0)}, see \cite{Mo}, pp. 246--247; for \eqref{c_0} and \eqref{return_probas1}, see \cite{SLB}, Lemma 1.2; for \eqref{g1} and \eqref{g2}, see \cite{SUB}, Lemma 1.1 and $(1.22)$, respectively. 
\end{proof}

We now turn to the Gaussian free field on $\mathbb{Z}^d$, $d\geq 3$, as defined in \eqref{phi}, and introduce certain crossing events involving paths of high level. On the space $\{0,1 \}^{\mathbb{Z}^d}$ endowed with its canonical $\sigma$-algebra $\mathcal{A}$, let $\{ K \longleftrightarrow K' \}$ $(\in \mathcal{A})$, for $K,K' \subset \mathbb{Z}^d$, denote the event that there exists a nearest-neighbor path connecting $K$ and $K'$ along which the configuration has value \nolinebreak$1$. Letting $\Phi_h:\mathbb{R}^{\mathbb{Z}^d} \rightarrow \{0,1 \}^{\mathbb{Z}^d}$, $\varphi \mapsto (1\{\varphi_x  \geq h\})_{x \in \mathbb{Z}^d}$, for $h \in \mathbb{R}$, we introduce
\begin{equation}\label{general_crossing_events}
\{ K \stackrel{\geq h}{\longleftrightarrow} K' \} = \Phi_h^{-1} (\{ K \longleftrightarrow K' \}), \text{ for $K,K'\subset \mathbb{Z}^d$}
\end{equation}
(part of $\mathbb{R}^{\mathbb{Z}^d}$). In words, this is the event that $K$ and $K'$ are connected by a nearest-neighbor path of vertices in the level set $E_\varphi^{\geq h}$, cf. \eqref{Ephi}. Note that this event is increasing upon introducing on $\mathbb{R}^{\mathbb{Z}^d}$ the usual partial order (i.e. $f \leq f'$ if and only if $f_x \leq f'_x$ for all $x \in \mathbb{Z}^d$). Moreover, the probability $\mathbb{P}[K \stackrel{\geq h}{\longleftrightarrow} K']$ is a decreasing function of $h \in \mathbb{R}$. In general, we will use the notation
\begin{equation} \label{EQ:event_A_Ah}
A^h \stackrel{\text{def.}}{=} \Phi_h^{-1}(A), \text{ for all $A \in \mathcal{A}$ and $h \in \mathbb{R}$.}
\end{equation}
We will also need the notion of \textit{flipping} events. Let $Y_x$, $x \in \mathbb{Z}^d$, denote the canonical coordinates on $\{0,1 \}^{\mathbb{Z}^d}$. One defines the inversion map $\iota : \{0,1 \}^{\mathbb{Z}^d} \rightarrow \{0,1 \}^{\mathbb{Z}^d}$ such that $Y_x \circ \iota  = 1- Y_x$, for all $x \in \mathbb{Z}^d$. 
Given some event $A \subset \{0,1 \}^{\mathbb{Z}^d}$, let $\overline{A} = \iota^{-1}(A)= \iota(A)$, which will be referred to as \textit{flipped} event. Note that $\overline{A}$ is decreasing whenever $A$ is increasing. Moreover, observing that $(-\varphi_x)_{x \in \mathbb{Z}^d}$ has the same law as $(\varphi_x)_{x \in \mathbb{Z}^d}$ under $\mathbb{P}$, we obtain,  for all $A \in \mathcal{A}$ and $h \in \mathbb{R}$,
 \begin{equation} \label{di7}
\begin{split}
\mathbb{P} [A^h ] &= \mathbb{P} [ ( 1\{ \varphi_x \geq h \} )_{x \in \mathbb{Z}^d} \in A] = \mathbb{P} [ ( 1\{ - \varphi_x \geq h \} )_{x \in \mathbb{Z}^d} \in A] \\
&=  \mathbb{P} [ ( 1\{ \varphi_x \geq -h \} )_{x \in \mathbb{Z}^d} \in \overline{A}] = \mathbb{P} [\overline{A}^{-h} ]. 
\end{split}
\end{equation}

Next, we collect some classical results concerning the maximum of the Gaussian free field in a finite set. Let $\emptyset \neq K \subset \subset \mathbb{Z}^d$. First, note that for all $d\geq 3$,
\begin{equation} \label{EQ:max_phi_exp}
\mathbb{E}\big[  \text{max}_{_{ x\in K}} \varphi_x \big] \leq \sqrt{ 2 g(0)  \log|K|},
\end{equation}
see for example \cite{Ta}, Proposition 1.1.3 (see also Theorem 1.3.3 in \cite{AT} for a much more general result). Moreover, setting 
\begin{equation} \label{EQ:c_1}
c_1 = ( \sup_{d \geq 3} 2 g(0))^{1/2},
\end{equation}
which is finite due to \eqref{g(0)}, the Borell-Tsirelson-Ibragimov-Sudakov (BTIS) inequality (see for example \cite{AT}, Theorem 2.1.1), together with \eqref{EQ:max_phi_exp}, yields the tail estimate, for all $d\geq 3$,
\begin{equation} \label{EQ:BTIS}
\mathbb{P} \big[  \text{max}_{_{x \in K}} \varphi_x >   c_1 \alpha  \big] \leq  e^{-( \alpha - \sqrt{  \log|K|} )^2},  \text{ if }  \alpha > \sqrt{  \log|K|}.
\end{equation}
We also recall the following elementary tail estimate of the normal distribution. For $\xi \sim \mathcal{N}(0, \sigma^2)$ (see for example \cite{AT}, Ch. 2, p. 49),
\begin{equation}\label{GRV_basic_estimate}
\Big( \frac{1}{h} - \frac{1}{h^3}\Big) e^{- h^2 /2 } \leq \sqrt{2\pi} \cdot P[\xi> \sigma h] \leq \frac{1}{h} e^{- h^2 /2 }, \text{ for all $h>0$}.
\end{equation}

We proceed with a classical fact concerning conditional distributions for the Gaussian free field on $\mathbb{Z}^d$, the proof of which can be found in  \cite{RS} (see Lemma 1.2 therein).

\begin{lemma} \label{L:conds_exps} $( d \geq 3,$  $\emptyset \neq K \subset \subset \mathbb{Z}^d)$ 

\medskip
\noindent Let
\begin{equation} \label{EQ:p_x,y^K}
p_{x , y}^K = P_x [H_{K} < \infty , X_{H_{K}} = y], \text{ for $x \in \mathbb{Z}^d$, $y \in K$},
\end{equation}
and
\begin{equation} \label{mu}
\mu_x^K = E_x [H_{K} < \infty , \varphi_{X_{H_{K}}}] = \sum_{y \in K } p_{x, y}^K \cdot \varphi_y,  \text{ for $x \in \mathbb{Z}^d$,}
\end{equation}
which is $\sigma(\varphi_x ; \; x \in K)$-measurable, and define $(\widetilde{\varphi}_x^K)_{x \in \mathbb{Z}^d}$ by
\begin{equation} \label{mudecomp}
\varphi_x = \widetilde{\varphi}_x^K + \mu_x^K, \text{ for $x \in \mathbb{Z}^d.$}
\end{equation}
Then, under $\mathbb{P}$,
\begin{equation} \label{ind+cond_exps}
\begin{split}
&(\widetilde{\varphi}^K_x)_{x \in \mathbb{Z}^d}  \text{ is a centered Gaussian field, independent from } \\
&\text{$\sigma(\varphi_x ; \; x \in K)$, with covariances $\mathbb{E}[\widetilde{\varphi}_x^K \widetilde{\varphi}_y^K]  = g_{K^c}(x,y)$.}
\end{split}
\end{equation}
(In particular, $\widetilde{\varphi}_x^K = 0$, $\mathbb{P}$-a.s. whenever $x \in K$).
\end{lemma}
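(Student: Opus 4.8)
Since Lemma~\ref{L:conds_exps} is a statement purely about a Gaussian family, I would first reduce it to a moment computation. By \eqref{mu}--\eqref{mudecomp} the variable $\widetilde{\varphi}_x^K = \varphi_x - \mu_x^K = \varphi_x - \sum_{y\in K} p_{x,y}^K\,\varphi_y$ is a \emph{deterministic} (finite, since $|K|<\infty$) linear combination of the $\varphi_z$'s, the coefficients $p_{x,y}^K$ being hitting probabilities of the walk, cf.\ \eqref{EQ:p_x,y^K}. Hence the combined family $\big((\widetilde{\varphi}^K_x)_{x\in\mathbb{Z}^d},(\varphi_z)_{z\in\mathbb{Z}^d}\big)$ is jointly Gaussian and centered. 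It therefore suffices to (i) compute the covariances, and (ii) recall that for jointly Gaussian processes vanishing cross-covariance is equivalent to independence; integrability is not an issue here because $K$ is finite, so each $\mu_x^K$ is an honest finite linear combination of Gaussians.

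The one computation that carries the whole proof is that of $\mathbb{E}[\widetilde{\varphi}_x^K\varphi_z]$ for arbitrary $x,z\in\mathbb{Z}^d$. Using \eqref{mudecomp}, \eqref{phi} and \eqref{mu},
\[
\mathbb{E}[\widetilde{\varphi}_x^K\varphi_z]=g(x,z)-\sum_{y\in K}p_{x,y}^K\,g(y,z).
\]
Now I would invoke the first-entrance decomposition of the Green function: this is precisely \eqref{G-GsubK} with $K$ and $K^c$ interchanged, namely $g(x,z)=g_{K^c}(x,z)+E_x[H_K<\infty,\,g(X_{H_K},z)]=g_{K^c}(x,z)+\sum_{y\in K}p_{x,y}^K\,g(y,z)$, which follows from the strong Markov property at the entrance time $H_K$ (the walk contributes $g_{K^c}(x,z)$ visits to $z$ before entering $K$, and, conditionally on entering $K$ at $y$, a further $g(y,z)$ afterwards). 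Substituting, the right-hand side collapses and one obtains $\mathbb{E}[\widetilde{\varphi}_x^K\varphi_z]=g_{K^c}(x,z)$ for all $x,z$.

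From this identity everything drops out. For $z\in K$ we have $g_{K^c}(x,z)=0$ (the killed Green function vanishes off $K^c$), so $\widetilde{\varphi}_x^K$ is uncorrelated with every $\varphi_z$, $z\in K$, and hence, by joint Gaussianity, $(\widetilde{\varphi}_x^K)_{x\in\mathbb{Z}^d}$ is independent of $\sigma(\varphi_z;z\in K)$. Being a difference of two centered Gaussians, $\widetilde{\varphi}^K$ is itself centered. For its covariance, write $\mathbb{E}[\widetilde{\varphi}_x^K\widetilde{\varphi}_y^K]=\mathbb{E}[\widetilde{\varphi}_x^K\varphi_y]-\mathbb{E}[\widetilde{\varphi}_x^K\mu_y^K]$; the second term vanishes since $\mu_y^K$ is $\sigma(\varphi_z;z\in K)$-measurable while $\widetilde{\varphi}_x^K$ is centered and independent of that $\sigma$-algebra, and the first term is $g_{K^c}(x,y)$ by the displayed identity, giving $\mathbb{E}[\widetilde{\varphi}_x^K\widetilde{\varphi}_y^K]=g_{K^c}(x,y)$ as claimed. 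Finally, for $x\in K$ one has $H_K=0$ and $X_{H_K}=x$ under $P_x$, whence $\mu_x^K=\varphi_x$ and $\widetilde{\varphi}_x^K=0$ $\mathbb{P}$-a.s.\ (consistently, $\mathbb{E}[(\widetilde{\varphi}_x^K)^2]=g_{K^c}(x,x)=0$). I do not expect a genuine obstacle here: the argument is a routine Gaussian-conditioning computation, and the only step that calls for any care is correctly matching the first-entrance decomposition to the already-stated formula \eqref{G-GsubK}.
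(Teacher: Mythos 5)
Your argument is correct and is the standard Gaussian-conditioning proof: joint Gaussianity of $(\widetilde{\varphi}^K,\varphi)$ reduces everything to the covariance computation $\mathbb{E}[\widetilde{\varphi}_x^K\varphi_z]=g(x,z)-\sum_{y\in K}p_{x,y}^K\,g(y,z)=g_{K^c}(x,z)$, obtained from the first-entrance (strong Markov) decomposition of the Green function, and the independence then follows from the vanishing of $g_{K^c}(\cdot,z)$ for $z\in K$. The paper itself does not prove this lemma but cites \cite{RS}, Lemma~1.2, whose proof proceeds along exactly these lines, so your approach coincides with the referenced one.
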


Lemma \ref{L:conds_exps} yields the following choice of regular conditional distributions for $(\varphi_x)_{x \in \mathbb{Z}^d}$ conditioned on the variables $(\varphi_x)_{x \in K}$, which will prove very useful in several instances below. Namely, $\mathbb{P}$-a.s.,
\begin{equation} \label{phi_cond_exps}
\mathbb{P} \big[ (\varphi_x)_{x \in \mathbb{Z}^d} \in \cdot \  \big\vert (\varphi_x)_{x \in K} \big] = \widetilde{\mathbb{P}} \big[ (\widetilde{\varphi}_x^K + \mu_x^K)_{x \in \mathbb{Z}^d} \in \cdot \ \big], 
\end{equation}
where $\mu_x^K$, $x \in \mathbb{Z}^d$, is given by \eqref{mu}, and $(\widetilde{\varphi}_x^K)_{x\in \mathbb{Z}^d}$ is a centered Gaussian field under $\widetilde{\mathbb{P}}$ with covariance structure $g_{K^c}(\cdot,\cdot)$, independent of $\varphi_x$, $x\in K$.

The following is an immediate corollary of Lemma \ref{L:conds_exps}, which provides a way to construct $(\varphi_x)_{x \in K}$ inductively from a family of independent Gaussian variables. 

\begin{lemma} \label{L:conds_exps2} $(d \geq 3, \;  \emptyset \neq K \subset \subset \mathbb{Z}^d)$

\medskip
\noindent Let $x_1,x_2,\dots,x_{|K|}$ be an enumeration of the elements of $K$, and $K_n = \{x_i; \, 1 \leq i <n\}$, for $1 \leq  n \leq |K|$ (in particular, $K_1 = \emptyset$). Let $\psi_x$, $x \in K$, be an independent family of Gaussian random variables, with
\begin{equation}\label{L:conds_exps2_psi}
\psi_{x_n} \sim \mathcal{N} (0,g_{K_{n}^c}(x_{n},x_{n})), \text{ for $ 1\leq n \leq |K|$}, 
\end{equation}
and define recursively (with a slight abuse of notation)
\begin{equation} \label{L:conds_exps2_phi}
\begin{split}
&\varphi_{x_{1}}= \psi_{x_1} \\
&\varphi_{x_{n}} = \psi_{x_{n}} + \mu_{x_{n}}^{K_{n}}, \text{ for $ 1 < n \leq |K|$.} 
\end{split}
\end{equation}
Then $(\varphi_x)_{x\in K}$ has the law of Gaussian free field restricted to $K$.
\end{lemma}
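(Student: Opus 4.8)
The plan is to derive Lemma \ref{L:conds_exps2} directly from Lemma \ref{L:conds_exps} by an induction on the enumeration $x_1,\dots,x_{|K|}$, building the field one coordinate at a time and checking at each step that the newly constructed vector has the correct joint law, namely that of the free field restricted to $K_{n+1} = \{x_1,\dots,x_n\}$. The key observation that makes the recursion close is that, in the notation of Lemma \ref{L:conds_exps} applied with the finite set $K_n$ in place of $K$, the conditional law of $\varphi_{x_n}$ given $(\varphi_x)_{x\in K_n}$ is Gaussian, with mean $\mu_{x_n}^{K_n} = \sum_{y\in K_n} p_{x_n,y}^{K_n}\varphi_y$ (which is $\sigma(\varphi_x;x\in K_n)$-measurable) and variance $g_{K_n^c}(x_n,x_n)$, and that this conditional law is moreover \emph{independent of} $(\varphi_x)_{x\in K_n}$ in the sense that $\widetilde\varphi_{x_n}^{K_n}=\varphi_{x_n}-\mu_{x_n}^{K_n}$ is independent of $\sigma(\varphi_x;x\in K_n)$ — this is exactly the content of \eqref{ind+cond_exps}. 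Hence adding an independent $\mathcal N(0,g_{K_n^c}(x_n,x_n))$ variable $\psi_{x_n}$ to $\mu_{x_n}^{K_n}$ reproduces the correct conditional law of $\varphi_{x_n}$, and therefore the correct joint law on $K_{n+1}$.

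In more detail, I would argue as follows. Base case: for $n=1$, $K_1=\emptyset$, so $g_{K_1^c}(x_1,x_1)=g(x_1,x_1)=g(0)=\mathbb E[\varphi_{x_1}^2]$, and $\varphi_{x_1}=\psi_{x_1}\sim\mathcal N(0,g(0))$ has the law of the free field at the single site $x_1$. Inductive step: suppose $(\varphi_x)_{x\in K_n}$ as constructed in \eqref{L:conds_exps2_phi} has the law of the free field restricted to $K_n$, for some $1\le n<|K|$. Apply Lemma \ref{L:conds_exps} with the finite set $K_n$: by \eqref{mudecomp} we may write $\varphi_{x_n}=\widetilde\varphi_{x_n}^{K_n}+\mu_{x_n}^{K_n}$ where, by \eqref{ind+cond_exps}, $\widetilde\varphi_{x_n}^{K_n}$ is a centered Gaussian independent of $\sigma(\varphi_x;x\in K_n)$ with variance $g_{K_n^c}(x_n,x_n)$. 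Consequently, a regular conditional distribution of $(\varphi_x)_{x\in K_{n+1}}$ given $(\varphi_x)_{x\in K_n}$ is obtained by leaving the $K_n$-coordinates fixed and drawing $\varphi_{x_n}$ as $\mu_{x_n}^{K_n}$ plus an independent $\mathcal N(0,g_{K_n^c}(x_n,x_n))$ variable — which is precisely the prescription \eqref{L:conds_exps2_phi} with $\psi_{x_n}$ as in \eqref{L:conds_exps2_psi}, using that $\psi_{x_n}$ is independent of $\psi_{x_1},\dots,\psi_{x_{n-1}}$ and hence, by the induction hypothesis together with the measurability of the map $(\psi_{x_1},\dots,\psi_{x_{n-1}})\mapsto(\varphi_x)_{x\in K_n}$, independent of $(\varphi_x)_{x\in K_n}$. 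Since the constructed vector $(\varphi_x)_{x\in K_{n+1}}$ has (by the induction hypothesis) the correct marginal law on $K_n$ and the correct conditional law of the remaining coordinate given those, it has the law of the free field restricted to $K_{n+1}$. Iterating up to $n=|K|$ gives the claim.

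One small bookkeeping point worth making explicit is that in \eqref{L:conds_exps2_phi} the quantity $\mu_{x_n}^{K_n}$ is to be read as $\sum_{y\in K_n}p_{x_n,y}^{K_n}\varphi_y$ with the $\varphi_y$, $y\in K_n$, being the \emph{already constructed} variables from the previous steps; the "slight abuse of notation" flagged in the statement is exactly the identification of this recursively built object with the field of Lemma \ref{L:conds_exps}, and the induction hypothesis is what legitimizes it. I do not expect any genuine obstacle here: the lemma is a direct unwinding of the tower property for Gaussian conditioning, and the only thing to be careful about is to apply Lemma \ref{L:conds_exps} with the \emph{correct} finite set at each stage (the set $K_n$ of previously enumerated points, \emph{not} the full $K$) and to invoke the independence statement in \eqref{ind+cond_exps} rather than merely the conditional-mean formula, since it is the independence that turns "add the conditional mean plus conditional-variance noise" into "add $\mu_{x_n}^{K_n}$ plus an \emph{independent} Gaussian". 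If desired, one can phrase the whole argument in terms of \eqref{phi_cond_exps} restricted to the coordinates in $K_{n+1}$, which packages the conditional-distribution statement in exactly the form needed.
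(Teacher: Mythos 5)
Your proof is correct and matches the paper's own argument: both proceed by induction on $n$, noting at the inductive step that $(\varphi_x)_{x\in K_n}$ is a measurable (linear) function of $(\psi_{x_1},\dots,\psi_{x_{n-1}})$ and hence independent of $\psi_{x_n}$, and then invoking Lemma \ref{L:conds_exps} with $K=K_n$ to identify the conditional law of $\varphi_{x_n}$ given $(\varphi_x)_{x\in K_n}$. Your additional remark about why \eqref{ind+cond_exps} (the independence of $\widetilde\varphi^{K_n}$ from $\sigma(\varphi_x;x\in K_n)$, not merely the formula for the conditional mean) is the essential ingredient is exactly the right emphasis.
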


\begin{proof}
We proceed by induction over $n$. For $n=1$, \eqref{L:conds_exps2_psi} implies that $\varphi_{x_1}$ is indeed a centered Gaussian variable with $\mathbb{E}[\varphi_{x_{1}}^2] = \mathbb{E}[\psi_{x_{1}}^2] = g(0)$. Suppose now that $(\varphi_{x_1}, \dots, \varphi_{x_{n-1}} ) = (\varphi_x)_{x \in K_n}$ has the law of Gaussian free field restricted to $K_n$, for some $1 < n < |K|$. In particular, by \eqref{L:conds_exps2_phi}, $(\varphi_x)_{x \in K_n}$ is a (linear) map of $(\psi_x)_{x \in K_n}$, it is therefore independent of $\psi_{x_{n}}$. Thus, Lemma \ref{L:conds_exps} (with $K=K_{n}$) readily yields that $((\varphi_x)_{x \in K_n}, \varphi_{x_{n}})$, with $\varphi_{x_{n}}$ as defined in \eqref{L:conds_exps2_phi}, has the desired law.
\end{proof}

\section{Upper bound} \label{S:UPPER_BOUND}

In this section, we show Theorem \ref{T:ASYMPTOTICS_UB}. As explained in the Introduction, this includes setting up an appropriate renormalization scheme, which is done in Subsection \ref{S:UB_RS}, and deducing suitable recursive bounds for the probability of the relevant crossing events at different scales. Proposition \ref{P:DEC_INEQ} entails the induction step, which is then propagated inductively in Proposition \ref{P:DEC_INEQ_PROPAGATED} along any increasing sequence of levels $(h_n)_{n\geq 0}$ satisfying a mild growth condition (cf. Remark \ref{R:UB_FINAL}), provided a suitable bound for the probability of crossing events at the lowest scale holds. Obtaining the desired seed estimate requires a substantial amount of work (see also Remark \ref{R:renorm}, 2) below, which details this difficulty more quantitatively) and is the object of Subsection \ref{S:UB_LCB}. The main result is Theorem \ref{T:LOCAL_CB} therein. Finally, Subsection \ref{S:DENOUEMENT} brings together the two ingredients to complete the proof of the upper bound \eqref{EQ:ASYMPTOTICS_UB}. 

\subsection{Renormalization scheme} \label{S:UB_RS}
We start by developing a renormalization scheme, which, in its broad lines, is adapted from the one described in Section 2 of \cite{RS}. We start by introducing an integer parameter $N \geq 1$ and a geometrically increasing sequence of length scales
\begin{equation} \label{L_n}
L_{n} = l_0^n L_0, \text{ for all $n \geq 0$, satisfying $L_0 \geq d$, $l_0 \geq 20(\sqrt{d} + N)$},
\end{equation}
and corresponding renormalized lattices
\begin{equation} \label{LL_n}
\mathbb{L}_n = L_n \mathbb{Z}^d, \text{ so that $\mathbb{L}_{n+1} \subset \mathbb{L}_n \subset \dots \subset \mathbb{L}_0 \; (\subset \mathbb{Z}^d)$, for all $n \geq 0$.}
\end{equation}
We are interested in the crossing events (cf. \eqref{general_crossing_events} for notation)
\begin{equation} \label{eventA}
A_{n,x}^h= \{ B_{\infty}(x, L_n) \stackrel{\geq h}{\longleftrightarrow} \partial_{\text{int}} B_{n,x} \}, \text{ for $n \geq 0$, $x \in \mathbb{L}_n$ and $h \in \mathbb{R}$,} 
\end{equation}
where
\begin{equation} \label{EQ:B_n,x}
B_{n,x} = 
\begin{cases}
B_{\infty}(x, 3L_n), & \text{ if $n \geq 1$} \\
B_{\infty}(x, L_0) + B_1(0,NL_0), &\text{ if $n = 0$}
\end{cases}
, 
\end{equation}
for all $x \in \mathbb{L}_n$. The case $n=0$ requires special treatment due to the following competing interests. On the one hand, the seed estimate we will need to establish forces $B_{0,x}$, $x \in \mathbb{L}_0$, to be not too small (this is the reason for introducing the additional parameter $N$). On the other hand, $B_{0,x}$ should remain sufficiently ``invisible'' to a simple random walk started at $\ell^\infty$-distance of order $L_n$, for some $n \geq 1$, from $x$ (see the proof of Proposition \ref{P:DEC_INEQ} below).

Similarly to the approach taken in Section 2 of \cite{RS}, instead of looking directly at the whole crossing path above level $h$ needed for $A_{n,x}^h$ to occur, we will consider its projection onto $2^n$ small and ``well-separated'' sets at scale $n=0$ (translates of $B_{0,0}$). These sets will be indexed by the leaves of a binary tree of depth $n$. We now describe this ``graphical cantor set'' construction more precisely. We write $T^{(k)} = \{1,2 \}^k$ for all $k \geq 0$ (with the convention $\{1,2 \}^0 = \emptyset$), and $T_n = \bigcup_{0 \leq k \leq n}T^{(k)}$ for the canonical dyadic tree of depth $n$. For arbitrary $n \geq 0$, we call a map $\mathcal{T}: T_n \rightarrow \mathbb{Z}^d$ a \textit{proper embedding of $T_n$ in $\mathbb{Z}^d$ with root at $x \in \mathbb{L}_n$} if
\begin{equation} \label{embedding}
\begin{array}{ll}
\text{i)} & \mathcal{T}(\emptyset) =x, \\
\text{ii)} & \text{for all $ 0 \leq k < n$: if $m_1,m_2 \in T^{(k+1)}$ are the two descendants} \\
& \text{of $m \in T^{(k)}$, then $\mathcal{T}(m_1)  \in \mathbb{L}_{n-k-1} \cap S_{\infty}(\mathcal{T}(m),L_{n-k})$, and}  \\
& \text{$\mathcal{T}(m_2)  \in \mathbb{L}_{n-k-1} \cap S_\infty(\mathcal{T}(m), 2L_{n-k})$.}
\end{array}
\end{equation}
We denote by $\Lambda_{n,x}$ the set of proper embeddings of $T_n$ in $\mathbb{Z}^d$ with root at $x \in \mathbb{L}_n$, for $n \geq 0$. One easily infers that
\begin{equation} \label{Lambda_n,x}
|\Lambda_{n,x}| \leq ((c_2l_0)^{d-1})^2 \cdot ((c_2l_0)^{d-1})^{2^2} \cdots ((c_2l_0)^{d-1})^{2^n} \leq (c_2l_0)^{2(d-1) 2^n},
\end{equation}
for some constant $c_2 \geq 1$. Moreover, on account of \eqref{L_n}, \eqref{EQ:B_n,x} and \eqref{embedding}, if $\mathcal{T} \in \Lambda_{n,x}$ for some $n \geq 1$ and $x \in \mathbb{L}_n$, and if $m \in T^{(k)}$ for some $0 \leq k < n$ with descendants $m_1,m_2 \in T^{(k+1)}$, then
\begin{equation} \label{EQ:embedding_geometry}
B_{n - k-1, \mathcal{T}(m_i)} \subset B_{n- k, \mathcal{T}(m)}, \text{ for $i=1,2$}.
\end{equation}
By elementary geometric considerations, and using \eqref{EQ:embedding_geometry}, one then deduces that for all $n \geq 1$, $x \in \mathbb{L}_n$ and $h \in \mathbb{R}$,
\begin{equation*}
A_{n,x}^h \subseteq \bigcup_{\mathcal{T}\in \Lambda_{n,x}} A_{n-1,\mathcal{T}(1)}^h \cap A_{n-1,\mathcal{T}(2)}^h,
\end{equation*}
see Figure \ref{F:crossing_high_dim} below, and inductively that
\begin{equation} \label{cascading_events}
A_{n,x}^h \subseteq \bigcup_{\mathcal{T}\in \Lambda_{n,x}} A_{\mathcal{T}}^h, \text{ where } A_{\mathcal{T}}^h = \bigcap_{m \in T^{(n)}} A_{0,\mathcal{T}(m)}^h, \text{ for $n \geq 0$, $x \in \mathbb{L}_n$ and $h \in \mathbb{R}$}.
\end{equation}
Accordingly, we introduce the quantity
\begin{equation} \label{p_n}
p_n(h) = \sup_{\mathcal{T} \in \Lambda_{n,x}} \mathbb{P} 
[A_{\mathcal{T}}^h], \text{ for $n \geq 0$, $h \in \mathbb{R}$},
\end{equation}
which does not depend on $x \in \mathbb{L}_n$ due to translation invariance, and is a decreasing function of $h\in \mathbb{R}$. 
\begin{figure} [h!] 
\centering
\psfragscanon
\includegraphics[width=18cm]{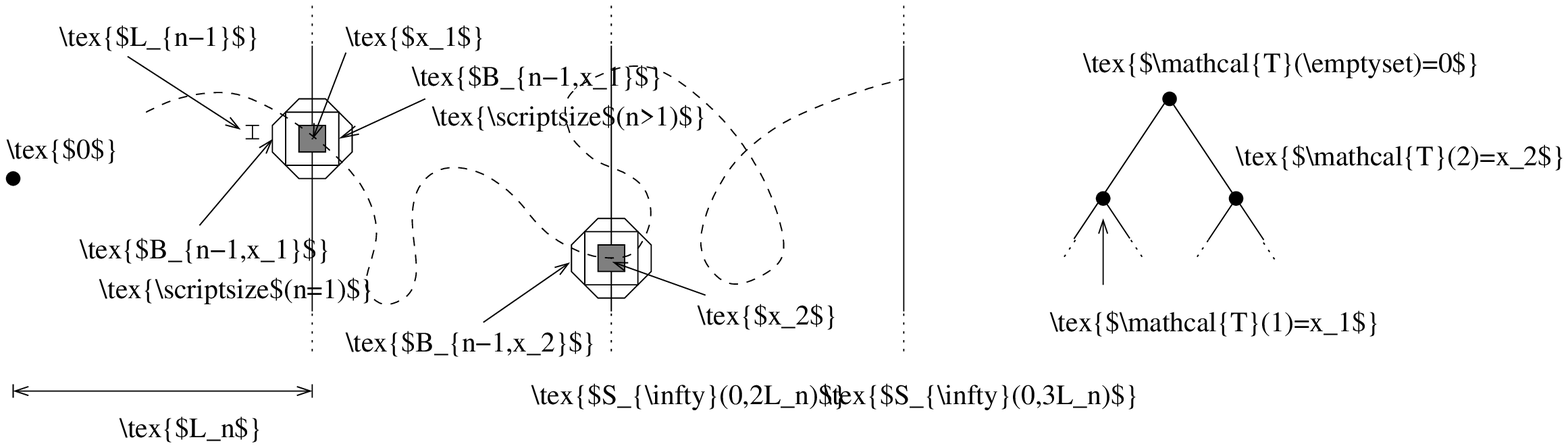}
\caption{Any nearest-neighbor path in $E_\varphi^{\geq h}$ (represented by the dashed line) connecting $B_{\infty}(0,L_n)$ to $S_\infty(0,3L_n)$, for some $n \geq 1$, must cross a box $B_{n-1,x_1}$, for some $x_1 \in S_\infty(0,L_n) \cap \mathbb{L}_{n-1}$, as well as $B_{n-1,x_2}$, for some $x_2 \in S_\infty(0,2L_n) \cap \mathbb{L}_{n-1}$, thus giving rise to the simultaneous occurrence of $A_{n-1,x_1}^h$ and $A_{n-1,x_2}^h$.}
\label{F:crossing_high_dim}
\end{figure}
We will later apply a union bound in \eqref{cascading_events} in order to bound $\mathbb{P}[A_{n,x}^h]$. Thus, estimates for $p_n(h)$ will have to be strong enough to overcome the combinatorial complexity $|\Lambda_{n,x}|$ coming from the number of trees one can choose. To begin with, the following proposition provides recursive bounds for $p_n(h_n)$, $n\geq 0$, along a suitable non-decreasing sequence $(h_n)_{n\geq 0}$. 

\begin{proposition} \label{P:DEC_INEQ} $(d\geq3$, $N \geq 1$, $L_0 \geq d$, $l_0 \geq 20 (\sqrt{d}+N))$

\medskip
\noindent There exists a constant $c_3$ such that, defining 
\begin{equation} \label{M}
m_n(d,L_0,N)=  \sqrt{ \log (2^n ((N+1) 3L_0)^d) },
\end{equation}
given any positive sequence $(\alpha_n)_{n \geq 0}$ satisfying
\begin{equation} \label{alpha_n_cond}
\alpha_n >  m_n(d,L_0,N),  \text{ for all } n \geq 0,
\end{equation}
and any increasing, real-valued sequence $(h_n)_{n \geq 0}$ satisfying
\begin{equation} \label{h_n_cond}
h_{n+1} \geq h_n +  \alpha_n \big(c_3 (\sqrt{d} + N) \big)^{d-2} \big(2 l_0^{-(d-2)} \big)^{n+1},  \text{ for all } n \geq 0,
\end{equation}
one has
\begin{equation} \label{dec_ineq_incr}
p_{n+1}(h_{n+1}) \leq p_n(h_n)^2 +   e^{-(\alpha_n -m_n(d,L_0,N))^2}, \ \text{for all } n \geq 0.
\end{equation}
\end{proposition}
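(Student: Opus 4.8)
The plan is to establish the recursive bound \eqref{dec_ineq_incr} by combining the cascading inclusion \eqref{cascading_events} at a single renormalization step with a decoupling estimate that allows the two ``halves'' of the crossing to be treated as approximately independent, at the cost of a small sprinkling in the level. Concretely, fix $n\geq 0$ and a proper embedding $\mathcal{T}\in\Lambda_{n+1,x}$. By \eqref{cascading_events} (or rather the one-step version of it preceding it), $A_{\mathcal{T}}^{h_{n+1}}$ is contained in $A_{\mathcal{T}_1}^{h_{n+1}}\cap A_{\mathcal{T}_2}^{h_{n+1}}$, where $\mathcal{T}_1,\mathcal{T}_2$ are the two sub-embeddings of depth $n$ rooted at $\mathcal{T}(1)\in S_\infty(x,L_{n+1})$ and $\mathcal{T}(2)\in S_\infty(x,2L_{n+1})$, each supported in the box $B_{n,\mathcal{T}(i)}$. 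These two boxes are ``well-separated'': using \eqref{EQ:B_n,x}, \eqref{L_n} (with $l_0\geq 20(\sqrt d+N)$) and the triangle inequality in $\ell^\infty$-norm, one checks that $d_\infty(B_{n,\mathcal{T}(1)},B_{n,\mathcal{T}(2)})$ is of order $L_{n+1}$, while each box has diameter of order $L_n=L_{n+1}/l_0$. The events $A_{\mathcal{T}_1}^{h}$ and $A_{\mathcal{T}_2}^{h}$ are increasing and, respectively, $\sigma(\varphi_z; z\in B_{n,\mathcal{T}(1)})$- and $\sigma(\varphi_z; z\in B_{n,\mathcal{T}(2)})$-measurable.

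The core of the argument is then a decoupling inequality of the following shape: for increasing events $A_1,A_2$ measurable with respect to the field on well-separated sets $U_1,U_2$, and for $\Delta>0$ comparable to the right-hand side of the sprinkling increment in \eqref{h_n_cond}, one has
\begin{equation*}
\mathbb{P}[A_1^{h+\Delta}\cap A_2^{h+\Delta}] \leq \mathbb{P}[A_1^{h}]\,\mathbb{P}[A_2^{h}] + (\text{error}).
\end{equation*}
The mechanism, following the strategy of \cite{RS}, is to use the domain Markov property (Lemma \ref{L:conds_exps}) to condition on $\varphi$ restricted to $U_1$: writing $\varphi_z=\widetilde\varphi_z^{U_1}+\mu_z^{U_1}$ for $z\in U_2$, the field $\widetilde\varphi^{U_1}$ is independent of $\sigma(\varphi_x;x\in U_1)$ and agrees with an independent copy of the free field's conditional law, while the ``harmonic correction'' $\mu_z^{U_1}=E_z[H_{U_1}<\infty,\varphi_{X_{H_{U_1}}}]$ is a small perturbation. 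Its smallness is quantified by bounding $\sup_{z\in U_2}P_z[H_{U_1}<\infty]$, which is controlled via \eqref{1.13}, the capacity bound \eqref{g2}, and the Green function decay \eqref{g1} at distance of order $L_{n+1}\geq l_0 d\gg d$: this produces the factor $(c_3(\sqrt d+N))^{d-2}(l_0^{-(d-2)})^{n+1}$ — a number much smaller than $1$ because $l_0\geq 20(\sqrt d+N)\geq 20\,c_3^{-1}(\sqrt d+N)$ for $c_3$ chosen appropriately. One then absorbs this correction into the sprinkling: on the event $\{\max_{z\in U_1}|\varphi_z|\leq c_1 m_n\}$, say (whose complement has probability at most $e^{-(\alpha_n-m_n)^2}$ by \eqref{EQ:BTIS} with $|U_1|\leq 2^n((N+1)3L_0)^d$ and $\alpha=\alpha_n$), the correction $\mu_z^{U_1}$ is uniformly bounded by $c_1 m_n \cdot \sup_z P_z[H_{U_1}<\infty]$, which is at most the increment $\Delta=h_{n+1}-h_n$ by \eqref{h_n_cond}; hence on this event, requiring $\varphi_z\geq h_{n+1}$ for $z$ on a path in $U_2$ forces $\widetilde\varphi_z^{U_1}\geq h_{n+1}-\Delta=h_n$, so $A_2^{h_{n+1}}$ implies the corresponding event $\widetilde A_2^{h_n}$ for the independent field $\widetilde\varphi^{U_1}$. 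Taking conditional expectation given $\varphi|_{U_1}$, using independence and $\mathbb{P}[\widetilde A_2^{h_n}]\leq p_n(h_n)$ (this is where translation invariance and the fact that $\widetilde\varphi^{U_1}$ stochastically dominates nothing larger than, but can be coupled below, the free field on $U_2$ enters — more precisely $g_{U_1^c}(z,z')\leq g(z,z')$ so the restricted field is dominated), then integrating over $A_1^{h_{n+1}}\subseteq A_1^{h_n}$ and the good event, gives $\mathbb{P}[A_{\mathcal{T}_1}^{h_{n+1}}\cap A_{\mathcal{T}_2}^{h_{n+1}}]\leq p_n(h_n)^2+e^{-(\alpha_n-m_n)^2}$.

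Finally, taking the supremum over $\mathcal{T}\in\Lambda_{n+1,x}$ and noting that the decomposition into sub-embeddings $\mathcal{T}_1,\mathcal{T}_2$ ranges within $\Lambda_{n,\mathcal{T}(1)}\times\Lambda_{n,\mathcal{T}(2)}$ so that each factor is bounded by $p_n(h_n)$, we obtain \eqref{dec_ineq_incr}. The main obstacle — and the place that requires genuine care rather than bookkeeping — is making the decoupling step \emph{uniform in $d$}: the error term $e^{-(\alpha_n-m_n)^2}$ must not degrade as $d\to\infty$, which forces the threshold $\alpha_n>m_n(d,L_0,N)$ in \eqref{alpha_n_cond} with the precise combinatorial factor $2^n((N+1)3L_0)^d$ counting the vertices in a scale-$0$ box, and forces the harmonic-correction bound to carry the explicit $d$-dependence $(c_3(\sqrt d+N))^{d-2}$ with a geometric gain $(2l_0^{-(d-2)})^{n+1}$ at each scale; getting the constant $c_3$ right so that this gain is genuinely contractive for \emph{all} large $d$ (given only $l_0\geq 20(\sqrt d+N)$) is the delicate point, and it relies essentially on the high-dimensional Green function and capacity estimates \eqref{g1}, \eqref{g2} rather than on any fixed-$d$ input.
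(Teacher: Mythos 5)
Your scaffolding matches the paper's: decomposing a depth-$(n+1)$ embedding into two sub-embeddings, conditioning on the field restricted to the first half, estimating the harmonic correction $\mu^{K_1}$ via \eqref{1.13}, \eqref{g1}, \eqref{g2}, absorbing it into the sprinkling increment on a high-probability event controlled by \eqref{EQ:BTIS}, and paying for the complement. But there is a genuine gap in the decoupling step itself. After you pass to $\widetilde\varphi^{K_1}$ on the good event, you obtain an event for the \emph{conditional} field, whose covariance is $g_{K_1^c}(\cdot,\cdot)$, and you then invoke ``$\mathbb{P}[\widetilde A_2^{h_n}]\leq p_n(h_n)$'' by arguing that $g_{K_1^c}\leq g$ entrywise implies the restricted field is stochastically dominated by the free field. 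This inference is false for centered Gaussian vectors: pointwise reduction of covariance does not give a monotone coupling, and for a crossing event (a union over paths of threshold intersections) Slepian-type comparison goes in the wrong direction if anything — raising covariances tends to \emph{lower} the probability that some path succeeds. Since $p_n(h_n)$ is by definition a free-field probability, while $\widetilde{\mathbb{P}}[\widetilde A_2^{h_n}]$ is computed under a different Gaussian law, the bridge you need is exactly what the paper supplies with the sign-flip device: on the good event one bounds $\mathbb{P}[A_{\mathcal{T}_2}^h\mid\varphi\vert_{K_1}]$ above by $\widetilde{\mathbb{P}}[(1\{\widetilde\varphi^{K_1}_\cdot-\mu^{K_1}_\cdot\geq h-\gamma\})\in A_{\mathcal{T}_2}]$, uses $\widetilde\varphi^{K_1}\stackrel{\text{law}}{=}-\widetilde\varphi^{K_1}$ to rewrite this as $\mathbb{P}[\overline A_{\mathcal{T}_2}^{\,\gamma-h}\mid\varphi\vert_{K_1}]$ — a conditional probability of a \emph{decreasing} flipped event under the \emph{original} measure — plugs it back into the unconditional form, and only then decouples $A_{\mathcal{T}_1}^h$ from $\overline A_{\mathcal{T}_2}^{\,\gamma-h}$ via FKG, finally using $\varphi\stackrel{\text{law}}{=}-\varphi$ to identify $\mathbb{P}[\overline A_{\mathcal{T}_2}^{\gamma-h}]=\mathbb{P}[A_{\mathcal{T}_2}^{h-\gamma}]$. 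Without this detour there is no valid comparison between the two Gaussian laws, so the concluding product bound does not follow.

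Two smaller points. First, you state that $A_{\mathcal{T}_i}^h$ is $\sigma(\varphi_z;z\in B_{n,\mathcal{T}(i)})$-measurable; it is in fact measurable with respect to the much smaller set $K_i=\bigcup_{m\in T^{(n)}}B_{0,\mathcal{T}_i(m)}$, a union of $2^n$ scale-$0$ leaf boxes, and this is not cosmetic: conditioning on the whole box $B_{n,\mathcal{T}(1)}$ would make $\mathrm{cap}(K_1)$ of order $(cL_n/\sqrt d)^{d-2}$ and $\log|K_1|$ of order $d\log L_n$, both growing with $n$ like $l_0^{n}$, destroying the geometric gain $(2l_0^{-(d-2)})^{n+1}$ and the $m_n$ in \eqref{M}. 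You do later quote $|U_1|\leq 2^n((N+1)3L_0)^d$, consistent with $U_1=K_1$, but the opening paragraph contradicts this. Second, your sprinkling factor $(c_3(\sqrt d+N))^{d-2}(l_0^{-(d-2)})^{n+1}$ drops the $2^{n+1}$ that appears in \eqref{h_n_cond}; the $2^n$ comes from $\mathrm{cap}(K_1)\leq 2^n\,\mathrm{cap}(B_2(0,(\sqrt d+N)L_0))$ via subadditivity \eqref{1.12}, and the extra $2$ from the $\pm$ trick replacing $\mu$ by $2\mu$.
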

The proof of Proposition \ref{P:DEC_INEQ} is similar to that of Proposition $2.2$ in \cite{RS} (see also Lemma 2.6 in \cite{DRS} and Proposition 4.1 in \cite{R}), with certain modifications. In particular, note that the constants appearing in these references all implicitly depend on $d$. In the present situation however, it is imperative to have a good control over the ``sprinkling'' condition for the sequence $(h_n)_{n\geq 0}$, cf. \eqref{h_n_cond}, in terms of $d$ as $d$ grows to infinity.

\begin{proof}
Let $\mathcal{T}$ be a proper embedding in $\Lambda_{n+1,0}$, for some $n \geq 0$. For $i=1,2$, we denote by $\mathcal{T}_i \in \Lambda_{n, \mathcal{T}(i)}$ the proper embedding of depth $n$ with root at $\mathcal{T}(i)$ ``induced'' by $\mathcal{T}$, i.e. determined by $\mathcal{T}_i(m) = \mathcal{T}(im)$, for all $m \in T_n$ (where $im \in T_{n+1}$ stands for the concatenation of $\{ i \} \in T^{(1)}$ and $m$), and define the sets
\begin{equation} \label{K_i_dec}
K_i= \bigcup_{m \in T^{(n)} } B_{0, \mathcal{T}_i(m)}, \text{ for $i=1,2$}.
\end{equation}
On account of \eqref{eventA} and \eqref{cascading_events}, we see that $A_{\mathcal{T}_i}^h \in \sigma(\varphi_x \, ; \, x\in K_i)$, for $i=1,2$. We introduce a parameter $\alpha >0$, set $\alpha' = c_1 \alpha$ (cf. \eqref{EQ:c_1} for the definition of $c_1$), and write, abbreviating $\text{max}_{_{K_1}} \varphi = \text{max}_{_{y\in K_1}} \varphi_y$,
\begin{equation} \label{P2.2.1}
\begin{split}
\mathbb{P}[A_{\mathcal{T}}^h]
&= \mathbb{P}[A_{\mathcal{T}_1}^h \cap A_{\mathcal{T}_2}^h] \\
&\leq \mathbb{P} [ A_{\mathcal{T}_1}^h \cap A_{\mathcal{T}_2}^h \cap \{ \text{max}_{_{K_1}} \varphi \leq \alpha' \} ]  +  \mathbb{P} [ \text{max}_{_{K_1}} \varphi > \alpha' ]  \\
& = \mathbb{E} [ 1_{A_{\mathcal{T}_1}^h} \cdot 1_{ \{ \text{max}_{_{K_1}} \varphi  \ \leq   \alpha' \} } \cdot \mathbb{P} [A_{\mathcal{T}_2}^h \, | \, (\varphi_{x})_{x\in K_1}] ] \ + \ \mathbb{P} [  \text{max}_{_{K_1}} \varphi > \alpha'  ]. 
\end{split}
\end{equation} 
By \eqref{phi_cond_exps}, the conditional probability appearing in the last line of \eqref{P2.2.1} can be rewritten as 
\begin{equation}\label{EQ:dec_ineq_cond_exp}
\mathbb{P} [A_{\mathcal{T}_2}^h \, | \, (\varphi_{x})_{x\in K_1}] = \widetilde{ \mathbb{P} } [ (1\{ \widetilde{\varphi}_x^{K_1} + \mu_x^{K_1} \geq h \})_{x\in \mathbb{Z}^d} \in A_{\mathcal{T}_2}], \text{ $\mathbb{P}$-a.s.},
\end{equation}
with $\mu_x^{K_1} = E_x [H_{K_1} < \infty , \varphi_{X_{H_{K_1}}}]$ and where $A_{\mathcal{T}_2} \in \mathcal{A}$, the canonical $\sigma$-algebra on $\{0,1 \}^{\mathbb{Z}^d}$, is such that $A_{\mathcal{T}_2}^h = \Phi_h^{-1}(A_{\mathcal{T}_2})$ (recall \eqref{EQ:event_A_Ah}). In particular, $A_{\mathcal{T}_2}$ is measurable with respect to the canonical coordinates in $K_2$. We will now estimate the random shift $\mu_x^{K_1} $, for $x \in K_2$, which will involve the bounds \eqref{g1}, \eqref{g2}. To this end, we first note that, by construction,
\begin{equation} \label{distK1_K2}
d_2(K_1,K_2) \geq   L_{n+1} / 2 \ (> d). 
\end{equation}
Indeed,
\begin{equation*}
\begin{array}{rcl}
\underset{y_i \in K_i, i=1,2}{\inf} |y_1-y_2|_2 \hspace{-1ex} &
\stackrel{ \eqref{embedding}, \eqref{EQ:embedding_geometry}}{\geq} & \hspace{-1ex} |\mathcal{T}(1)-\mathcal{T}(2)|_2 - \displaystyle\sum_{i=1,2} \displaystyle\sup_{y \in B_{n,\mathcal{T}(i)}} |y - \mathcal{T}(i)|_2 \\
&\stackrel{\eqref{norm_equivalence},\eqref{EQ:B_n,x}}{\geq} & \hspace{-1ex} L_{n+1} - 2( (\sqrt{d} +N)L_0 \vee \sqrt{d} \, 3L_n) \\
&\stackrel{\eqref{L_n}}{\geq} & \hspace{-1ex} (l_0 -  2(\sqrt{d}+N))L_n \wedge (l_0 - 6\sqrt{d}))L_n
\end{array}
\end{equation*}	
(in particular, we have used that $B_{0,x} \subset B_2(x, (\sqrt{d} + N)L_0)$ for all $x \in \mathbb{L}_0$ in the second line),
which, together with the constraints $L_0 \geq d$ and $l_0 \geq 20 (\sqrt{d}+N)$, immediately yields \eqref{distK1_K2}. Thus, for arbitrary $x \in K_2$ and $\alpha>0$, on the event $\{ \text{max}_{_{K_1}} \varphi \leq \alpha' \}$,
\begin{equation} \label{P2.2.4}
\begin{array}{rcl}
\mu_x^{K_1} \hspace{-1ex} &  \leq &  \hspace{-1ex} \alpha' \cdot P_x [H_{K_1} < \infty] \\
 \hspace{-1ex} & \stackrel{\eqref{1.13}}{\leq} & \hspace{-1ex}  \alpha' \cdot \text{cap}(K_1) \cdot \displaystyle\sup_{x \in K_2, \; y \in K_1} g(x,y)\\
 &\stackrel{\eqref{1.12}, \eqref{1.12bis}}{ \leq} & \hspace{-1ex} \alpha' 2^n \cdot \text{cap}(B_2(0,(\sqrt{d} + N)L_0))\cdot \displaystyle \sup_{x \in K_2, \; y \in K_1} g(x,y) \\
 & \stackrel{\eqref{g1}, \eqref{g2}}{\leq} & \hspace{-1ex} \alpha' 2^n (c(\sqrt{d} + N)L_0 / \sqrt{d})^{d-2}\cdot \big(c' \sqrt{d} /  l_0^{n+1} L_0 \big)^{d-2} \\
 &\leq &\hspace{-1ex} \alpha 2^n (c_3(\sqrt{d}+N))^{d-2} \cdot \big( l_0^{-(d-2)} \big)^{n+1} \stackrel{\text{def.}}{=} \gamma /2,
\end{array}
\end{equation}
where \eqref{g1} applies due to \eqref{distK1_K2} and \eqref{g2} because $(\sqrt{d} + N)L_0 > d$ by assumption, and where the last line in \eqref{P2.2.4} defines the constant $c_3$ appearing in the statement of Proposition \ref{P:DEC_INEQ} above. In particular, on the event $\{ \text{max}_{_{K_1}} \varphi \leq \alpha' \}$ and for any $x \in K_2$, the inequality $\widetilde{\varphi}_x^{K_1} + \mu_x^{K_1} \geq h$ implies 
\vspace{-0.8ex}
\begin{equation*}
\widetilde{\varphi}_x^{K_1} - \mu_x^{K_1} \geq h - 2 \mu_x^{K_1} \stackrel{\eqref{P2.2.4}}{\geq} h - \gamma.
\end{equation*}
Hence, on the event $\{ \text{max}_{_{K_1}} \varphi \leq \alpha' \}$, since $A_{\mathcal{T}_2}$ is increasing, \eqref{EQ:dec_ineq_cond_exp} yields
\begin{equation*} 
\begin{split}
\begin{array}{rcl}
\mathbb{P} [A_{\mathcal{T}_2}^h \, | \, (\varphi_{x})_{x\in K_1}]
& \leq & \hspace{-1ex} \widetilde{ \mathbb{P} } [ ( 1\{ \widetilde{\varphi}_x^{K_1} - \mu_x^{K_1} \geq h - \gamma \})_{x \in \mathbb{Z}^d} \in A_{\mathcal{T}_2} ] \\
& = & \hspace{-1ex} \widetilde{ \mathbb{P} } [ ( 1\{ -\widetilde{\varphi}_x^{K_1} - \mu_x^{K_1} \geq h - \gamma \})_{x \in \mathbb{Z}^d} \in A_{\mathcal{T}_2} ] \\
& = & \hspace{-1ex} \widetilde{ \mathbb{P} } [ ( 1\{ \widetilde{\varphi}_x^{K_1} + \mu_x^{K_1} <  \gamma - h\})_{x \in \mathbb{Z}^d} \in A_{\mathcal{T}_2} ] \\
& = & \hspace{-1ex} \widetilde{ \mathbb{P} } [ ( 1\{ \widetilde{\varphi}_x^{K_1} + \mu_x^{K_1} \geq \gamma - h \})_{x \in \mathbb{Z}^d} \in \overline{A}_{\mathcal{T}_2} ] \\
& \stackrel{\eqref{phi_cond_exps}}{=}  & \hspace{-1ex} \mathbb{P} [\overline{A}_{\mathcal{T}_2}^{ \gamma - h} \, | \, (\varphi_{x})_{x\in K_1} ],
\end{array}
\end{split}
\end{equation*}
where $\overline{A}_{\mathcal{T}_2}$ denotes the flipped event (recall the notation from above \eqref{di7}), and where we have used in the second line that $\widetilde{\varphi}^{K_1}$ and $-\widetilde{\varphi}^{K_1}$ have the same law under $\widetilde{\mathbb{P}}$, cf. \eqref{ind+cond_exps}, and that $\widetilde{\mathbb{P}}$ does not act on $\mu^{K_1}$. Inserting this bound into \eqref{P2.2.1}, we obtain
\begin{equation} \label{P2.2.7}
\begin{split}
\mathbb{P}[A_{\mathcal{T}}^h] &\leq \mathbb{P} [ A_{\mathcal{T}_1}^h \cap \overline{A}_{\mathcal{T}_2}^{ \gamma - h} ] +  \mathbb{P} [  \text{max}_{_{K_1}} \varphi > \alpha'] \\
&\leq \mathbb{P} [ A_{\mathcal{T}_1}^h] \cdot \mathbb{P} [A_{\mathcal{T}_2}^{ h - \gamma} ] +  \mathbb{P} [  \text{max}_{_{K_1}} \varphi > \alpha'],
\end{split}
\end{equation}
and the second step is due to \eqref{di7} and the FKG-inequality for the free field (see for example \cite{GHM}, Ch. 4), which applies and yields an upper bound, because the event $ A_{\mathcal{T}_1}^h$ is increasing and $ \overline{A}_{\mathcal{T}_2}^{ \gamma - h}$ is decreasing, cf. \eqref{eventA}, \eqref{cascading_events}.

It remains to bound the error term. Since $B_{0,0} \subset B_\infty(0,(N+1)L_0)$, we have by \eqref{K_i_dec} that $|K_1| \leq 2^n |B_\infty(0,(N+1)L_0)|< 2^n(3(N+1)L_0)^d$, and \eqref{EQ:BTIS}, \eqref{M} imply that (recall that $\alpha' = c_1 \alpha$)
\begin{equation*}
\mathbb{P} [  \text{max}_{_{K_1}} \varphi > \alpha'] \leq e^{-(\alpha- m_n(d,L_0,N))^2}, \text{ for all $\alpha$ satisfying \eqref{alpha_n_cond}}.
\end{equation*}
Substituting this into \eqref{P2.2.7} yields, for all $\alpha_n \stackrel{\text{def.}}{=} \alpha$ fulfilling \eqref{alpha_n_cond} and all $h' \geq h$,
\begin{equation*} 
\mathbb{P} [A_{\mathcal{T}}^{h'}] \ \leq \ \mathbb{P}[A_{\mathcal{T}}^h]  \leq \mathbb{P} [ A_{\mathcal{T}_1}^{h- \gamma} ] \cdot \mathbb{P} [ A_{\mathcal{T}_2}^{h - \gamma} ]  +  e^{-(\alpha_n - m_n(d,L_0,N))^2}.
\end{equation*}
The claim \eqref{dec_ineq_incr} now readily follows upon taking suprema over all $\mathcal{T} \in \Lambda_{n+1,0}$ on both sides, letting $h_n \stackrel{\text{def.}}{=} h - \gamma \in \mathbb{R}$ ($h$ was arbitrary), $h_{n+1}\stackrel{\text{def.}}{=}h'$, so that requiring $h_{n+1} = h' \geq h = h_n + \gamma$, by virtue of \eqref{P2.2.4}, is precisely the condition \eqref{h_n_cond}. This concludes the proof of Proposition \nolinebreak \ref{P:DEC_INEQ}.
\end{proof}

We will now propagate the bounds \eqref{dec_ineq_incr} inductively along a suitable sequence $(h_n)_{n \geq 0}$, and select to this end
\begin{equation} \label{alpha_n}
\alpha_n =  m_n(d,L_0,N) + 2^{(n+1)/2} \big(n^{1/2} + k_0^{1/2}\big), \text{ for } n \geq 0,
\end{equation}
for some parameter $k_0 > 0$ to be specified later, and with $m_n(d,L_0,N)$ as defined in \eqref{M}. In particular, note that \eqref{alpha_n} satisfies the condition \eqref{alpha_n_cond} for every choice of $k_0 >0$.

\begin{proposition} \label{P:DEC_INEQ_PROPAGATED} $(d\geq3$, $N \geq 1$, $L_0 \geq d$, $l_0 \geq 20 (\sqrt{d}+N))$

\medskip
\noindent  Assume $h_0 \in \mathbb{R}$ and $k_0 \geq (1-e^{-1})^{-1} \stackrel{\text{\textnormal{def.}}}{=}b$ are such that
\begin{equation} \label{p_0_cond}
p_0(h_0) \leq e^{-k_0}, 
\end{equation}
and let the sequence $(h_n)_{n \geq 0}$ satisfy \eqref{h_n_cond} with $(\alpha_n)_{n\geq0}$ as defined in \eqref{alpha_n}. Then,
\begin{equation} \label{p_n_bounds}
p_n(h_n) \leq e^{-(k_0 - b)2^n},  \text{ for all } n \geq 0. 
\end{equation}
\end{proposition}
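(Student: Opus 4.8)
The plan is to prove \eqref{p_n_bounds} by induction on $n$, feeding the one-step estimate \eqref{dec_ineq_incr} of Proposition \ref{P:DEC_INEQ} into itself. First I would note that Proposition \ref{P:DEC_INEQ} does apply with the present data: the choice \eqref{alpha_n} satisfies \eqref{alpha_n_cond} (as already observed after \eqref{alpha_n}), and by hypothesis $(h_n)_{n\geq 0}$ satisfies \eqref{h_n_cond} with exactly this sequence $(\alpha_n)_{n\geq 0}$, so \eqref{dec_ineq_incr} is available for every $n\geq 0$.

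The key algebraic simplification is to insert \eqref{alpha_n} into the error term of \eqref{dec_ineq_incr}. By \eqref{alpha_n} one has $\alpha_n - m_n(d,L_0,N) = 2^{(n+1)/2}\big(n^{1/2}+k_0^{1/2}\big)$, and since $(a+b)^2 \geq a^2+b^2$ for $a,b\geq 0$ this gives $\big(\alpha_n - m_n(d,L_0,N)\big)^2 = 2^{n+1}\big(n^{1/2}+k_0^{1/2}\big)^2 \geq 2^{n+1}(n+k_0)$, so that \eqref{dec_ineq_incr} reduces to the clean recursion
\begin{equation*}
p_{n+1}(h_{n+1}) \leq p_n(h_n)^2 + e^{-2^{n+1}(n+k_0)}, \quad \text{for all } n\geq 0.
\end{equation*}

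Next I would prove the slightly stronger bound $p_n(h_n)\leq e^{-a_n 2^n}$, where $a_0 = k_0$ and $a_{n+1} = a_n - 2^{-(n+1)}\log 2$; this $(a_n)$ is decreasing with $\inf_n a_n = k_0 - \log 2 \geq k_0 - b$ (using $\log 2 \leq b = (1-e^{-1})^{-1}$), so \eqref{p_n_bounds} follows at once. The base case $n=0$ is precisely the assumption \eqref{p_0_cond}. For the inductive step, assuming $p_n(h_n)\leq e^{-a_n 2^n}$, and using $a_n \leq a_0 = k_0 \leq n+k_0$ to bound the additive error by $e^{-2^{n+1}(n+k_0)} \leq e^{-2^{n+1}a_n}$, the displayed recursion yields
\begin{equation*}
p_{n+1}(h_{n+1}) \leq e^{-2^{n+1}a_n} + e^{-2^{n+1}a_n} = 2\,e^{-2^{n+1}a_n} = e^{-2^{n+1}\left(a_n - 2^{-(n+1)}\log 2\right)} = e^{-a_{n+1}2^{n+1}},
\end{equation*}
which closes the induction. (The hypothesis $k_0\geq b$ only serves to make $k_0-b\geq 0$, so that \eqref{p_n_bounds} is a genuine rather than vacuous bound; any fixed $b\geq \log 2$ suffices in the argument, so the stated value $(1-e^{-1})^{-1}$ is more than enough, the summable correction series $\sum_{n\geq 1}2^{-n}\log 2$ being what forces such a constant to appear at all.)

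I do not expect a real obstacle at this stage: given \eqref{dec_ineq_incr}, the proof is essentially bookkeeping, the only mildly delicate point being to choose the strengthened hypothesis $p_n(h_n)\leq e^{-a_n 2^n}$ with enough slack that squaring it reabsorbs the additive error at each step. The genuinely hard work lies elsewhere — in the decoupling argument behind \eqref{dec_ineq_incr} and, above all, in establishing the seed estimate \eqref{p_0_cond} via Theorem \ref{T:LOCAL_CB}.
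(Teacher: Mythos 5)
Your argument is correct, and it is essentially the argument the paper has in mind: the paper in fact omits its own proof of Proposition~\ref{P:DEC_INEQ_PROPAGATED}, referring the reader to the completely analogous Proposition~2.4 of \cite{RS}, and the structure there is precisely the induction you describe, where the error term in \eqref{dec_ineq_incr} is absorbed at each step by letting the exponent $a_n$ in $p_n(h_n)\leq e^{-a_n 2^n}$ creep downward along a summable sequence. One small remark worth noting: your bookkeeping choice $a_{n+1}=a_n-2^{-(n+1)}\log 2$ yields the total loss $\sum_{n\geq 1}2^{-n}\log 2=\log 2\approx 0.69$, which is strictly sharper than the stated $b=(1-e^{-1})^{-1}=\sum_{n\geq 0}e^{-n}\approx 1.58$; the value $b$ in the proposition reflects a coarser (but equally valid) estimate in which the per-step loss is bounded by $e^{-n}$ rather than $2^{-(n+1)}\log 2$, and since the factor $(k_0-b)$ can always be made large by choosing $k_0$, the distinction is immaterial for the downstream use in Subsection~\ref{S:DENOUEMENT}. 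In either case, the verification that Proposition~\ref{P:DEC_INEQ} is applicable — i.e.\ that \eqref{alpha_n} satisfies \eqref{alpha_n_cond} and $(h_n)$ satisfies \eqref{h_n_cond} — and the use of the elementary inequality $(\sqrt n+\sqrt{k_0})^2\geq n+k_0$ are exactly as you say, so there is no gap.
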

On account of Proposition \ref{P:DEC_INEQ} and the choice of $\alpha_n$, $n \geq 0$, in \eqref{alpha_n}, (compare this to Proposition 2.2 and (2.51) in \cite{RS}, respectively), the proof of Proposition \ref{P:DEC_INEQ_PROPAGATED} is completely analogous to that of Proposition 2.4 in \cite{RS}. We therefore omit it.

\begin{remark} \label{R:renorm}

 1) Even though we will not need this below, note that the conclusions of Propositions \ref{P:DEC_INEQ} and \ref{P:DEC_INEQ_PROPAGATED} continue to hold for arbitrary increasing (seed) events $A_{0,x} \subset \{0,1 \}^{\mathbb{Z}^d}$, for $x \in \mathbb{L}_0$, which are measurable with respect to the canonical coordinates in $B_{0,x}$ as defined in \eqref{EQ:B_n,x}, upon letting $A_{0,x}^h = \Phi_h^{-1}(A_{0,x})$, for all $x \in \mathbb{L}_0$ and $h \in \mathbb{R}$, defining events $A_{\mathcal{T}}^h$, with $\mathcal{T}  \in \Lambda_{n,x}$ for arbitrary $n \geq 0$ and $ x \in \mathbb{L}_n$, as in \eqref{cascading_events}, and taking an additional supremum over $x \in \mathbb{L}_n$ in the definition \eqref{p_n} of $p_n(h)$ (no translation invariance required). Moreover, by symmetry, if instead all events $A_{0,x}$, $x \in \mathbb{L}_0$, are decreasing, under the assumptions of Proposition \ref{P:DEC_INEQ}, the conclusion \eqref{dec_ineq_incr} holds for the sequence $(-h_n)_{n \geq 0}$ in place of $(h_n)_{n \geq 0}$.

\bigskip

\noindent 2) The condition \eqref{p_0_cond} is quite strong. Indeed, by \eqref{cascading_events} and \eqref{p_n}, we have, for all $h \in \mathbb{R}$, $n \geq 0$ and $x\in \mathbb{L}_n$,
\begin{equation*}
\mathbb{P}[A_{n,x}^h] = \mathbb{P}[B_{\infty}(x,L_n) \stackrel{\geq h}{\longleftrightarrow} \partial_{\text{int}}B_{n,x}] \leq |\Lambda_{n,x}| \cdot p_n(h).
\end{equation*}
If the bound \eqref{p_n_bounds} were to hold for some value of $k_0 \geq b$, it would yield, together with the estimate \eqref{Lambda_n,x} on the combinatorial complexity $|\Lambda_{n,x}|$, that $\mathbb{P}[A_{n,x}^h] \leq (c_2 l_0^{2(d-1)} / e^{k_0 - b})^{2^n}$. Thus $k_0$ will have to satisfy the requirement $e^{k_0} \geq c l_0^{2(d-1)}$ for a certain constant $c>0$ in order for this bound to be of any use. Substituting this into \eqref{p_0_cond} yields the condition $p_0(h_0) = \mathbb{P}[A_{0,0}^{h_0}] \leq c' l_0^{-2(d-1)}$. For the purpose of proving Theorem \ref{T:ASYMPTOTICS_UB}, $h_0$ will be at most $h_{\text{as}}(1 + \varepsilon)$, for some $\varepsilon >0$. We will therefore essentially have to prove that the crossing event $A_{0,0}^{h_{\text{as}}(1 + \varepsilon)}$ decays roughly like $\exp (-c d \log d)$ for all $d \geq c'(\varepsilon)$ (recall that $l_0$ is required to grow at least like $\sqrt{d}$, cf. \eqref{L_n}). 
\hfill $\square$
\end{remark}

\subsection{The local picture} \label{S:UB_LCB}

In this subsection, we provide an estimate regarding the local connectivity properties of the level set $E_\varphi^{\geq h_{\text{as}}(1+ \varepsilon)}$, for arbitrary $\varepsilon > 0$, in high dimension. Here, ``local'' means that we only investigate connectivity within an $\ell^1$-ball having a radius of order at most $d$ around a given point on the lattice. Specifically, we will consider the probability of a nearest-neighbor path in $E_\varphi^{\geq h_{\text{as}}(1+ \varepsilon)}$ connecting the center of such a ball to its boundary. The resulting bound, cf. Theorem \ref{T:LOCAL_CB}, is of independent interest, but it will be crucial in establishing the seed estimate \eqref{p_0_cond}, thus enabling us to launch the renormalization. We begin with the following simple lemma, which will be useful in several instances below.

\begin{lemma} \label{L:zeta_i_bound} $( d\geq 3)$ 

\medskip
\noindent There exists a constant $c>0$ such that, for all $\ell > 0$, $x \in \mathbb{Z}^d$, all sets $K, U$ satisfying $ U \subseteq K \subset \subset \mathbb{Z}^d \setminus \{ x \}$ and $|U| \leq \ell d$, and all $\varepsilon > 0$, 
\begin{equation} \label{zeta_i_bound}
\mathbb{P} \Big[ \sum_{y\in U} p_{x,y}^K \varphi_y > \varepsilon h_{\textnormal{as}} \Big] \leq \exp \Big\{- \frac{c \, \varepsilon^2}{\ell(\ell+1)} d \log d \Big\},
\end{equation}
with $p_{x,y}^K$ as defined in \eqref{EQ:p_x,y^K}.
\end{lemma}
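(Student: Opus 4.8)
The random variable $Z = \sum_{y \in U} p_{x,y}^K \varphi_y$ is a centered Gaussian, so the plan is to estimate its variance and then invoke the standard Gaussian tail bound \eqref{GRV_basic_estimate}. Writing $\sigma^2 = \mathbb{E}[Z^2] = \sum_{y,y' \in U} p_{x,y}^K p_{x,y'}^K g(y,y')$, the key point is that $\sum_{y \in K} p_{x,y}^K \leq P_x[H_K < \infty] \leq 1$, so the coefficients $(p_{x,y}^K)_{y \in U}$ form a subprobability vector on the set $U$; hence $\sigma^2 \leq \max_{y' \in U} \sum_{y \in U} p_{x,y}^K g(y,y')$, and it suffices to bound $\sum_{y \in U} g(y,y')$ uniformly over $y' \in U$. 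I would split this sum according to $\ell^1$-distance from $y'$: the single term $y = y'$ contributes $g(0) \leq c_1^2/2$, and for $y \neq y'$ I would use the bound \eqref{c_0}, $g(y-y') \leq (c_0 d / |y-y'|_1)^{d/2 - 2}$, valid for $d \geq 5$ (the finitely many cases $d = 3,4$ being absorbed into the constant).

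The decisive observation is that, because $|U| \leq \ell d$, the set $U \setminus \{y'\}$ fits inside an $\ell^1$-ball around $y'$ of radius $R = R(\ell, d)$ with $|B_1(y', R)| \geq \ell d$; using the volume growth $|B_1(0,R)| \geq \binom{R}{d} \gtrsim (R/d)^d$ (or more crudely $|B_1(0,R)| \geq (cR/d)^d$ for $R \geq d$), one gets that $R$ can be taken of order $C(\ell) d$. Consequently every $y \in U \setminus\{y'\}$ satisfies $|y - y'|_1 \leq C(\ell) d$, so $g(y - y') \leq (c_0 d / |y-y'|_1)^{d/2-2} \leq c_0^{d/2-2}$ only when $|y-y'|_1 \geq $ const$\cdot d$; to control the near terms I would instead bound $\sum_{y \in U \setminus \{y'\}} g(y-y')$ by grouping over spheres $S_1(y', k)$ and using $|S_1(y',k)| \leq |B_1(y',k)|$ together with the fact that at most $|U| \leq \ell d$ points contribute, so that $\sum_{y \in U, y \neq y'} g(y-y') \leq \sum_{k=1}^{C(\ell)d} |S_1(0,k)| \cdot (c_0 d/k)^{d/2-2}$, and this geometric-type sum is dominated by a constant depending only on $\ell$ (each term being at most $1$ for $k$ below $c_0 d$ and summable beyond, with the total number of terms $O(d)$). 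The cleaner route, which I expect to be what is needed, is: $\sum_{y \in U} g(y,y') \leq g(0) + (\text{number of } y \in U) \cdot \max_{y \neq y'} g(y-y')$ is too lossy, so instead bound it by the maximum over subsets of given size, i.e. $\sum_{y \in U}g(y-y') \leq \sup\{\sum_{y \in F} g(y - y') : y' \in F,\ |F| \leq \ell d + 1\}$, and this supremum is achieved by taking $F$ to be an $\ell^1$-ball, giving $\sum_{k \geq 0} \big(|B_1(0,k)| \wedge \ell d\big) \cdot (\text{increment of } g \text{ at radius } k)$; carrying this through yields $\sigma^2 \leq c\,\ell(\ell+1)$ — the quadratic dependence on $\ell$ entering because the worst case ball has radius growing like $\ell^{1/(d-1)} \cdot d$, hmm, actually the $\ell(\ell+1)$ suggests a cruder estimate $\sigma^2 \leq c\,\ell(\ell+1)$ obtained by just bounding $\sum_{y \in U} p_{x,y}^K g(y,y') \leq (\sum_y p_{x,y}^K)(\max_y g(y,y'))$ together with a separate handling of the diagonal, and I would reverse-engineer the exact combinatorics to land on this form.

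Granting $\sigma^2 \leq c\,\ell(\ell+1)$, the proof concludes quickly: by \eqref{GRV_basic_estimate} with $h = \varepsilon h_{\text{as}}/\sigma$,
\begin{equation*}
\mathbb{P}[Z > \varepsilon h_{\text{as}}] = P[\xi > \sigma h] \leq \tfrac{1}{\sqrt{2\pi}}\, e^{-h^2/2} = \tfrac{1}{\sqrt{2\pi}} \exp\Big\{ -\frac{\varepsilon^2 h_{\text{as}}^2}{2\sigma^2}\Big\},
\end{equation*}
and since $h_{\text{as}}^2 = 2 g(0) \log d \geq 2\log d$ while $\sigma^2 \leq c\,\ell(\ell+1)$, the exponent is at most $-\frac{\varepsilon^2 \log d}{c\,\ell(\ell+1)}$; this is not yet the claimed $d\log d$. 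To recover the extra factor $d$ one must use that the $p^K_{x,y}$ are not merely a subprobability vector but in fact each individual $p_{x,y}^K$ is small — of order $1/d$ — because from a point at $\ell^1$-distance $\geq 1$ the walk in $\mathbb{Z}^d$ hits a fixed finite target with probability $O(1/d)$, cf. \eqref{return_probas1}; more precisely $p_{x,y}^K \leq P_x[H_{\{y\}} < \infty] \leq g(x,y)/g(0) \leq c/d$ for $y \neq x$ when $x \notin K$, which forces $\sigma^2 = \sum_{y,y'} p_{x,y}^K p_{x,y'}^K g(y,y') \leq \frac{c}{d}\sum_y p_{x,y}^K \sum_{y'} g(y,y') \leq \frac{c}{d}\cdot c\,\ell(\ell+1) = \frac{c'\ell(\ell+1)}{d}$, whence $\varepsilon^2 h_{\text{as}}^2/(2\sigma^2) \geq c\,\varepsilon^2 d \log d / (\ell(\ell+1))$, which is exactly \eqref{zeta_i_bound}. \textbf{The main obstacle} I anticipate is the bookkeeping in the bound $\sum_{y \in U} g(y,y') \leq c\,\ell(\ell+1)$ uniformly over $y'$: one needs the worst configuration of $\leq \ell d$ lattice points to be (essentially) a ball, and then to sum $g$ over that ball using \eqref{c_0} and the $\ell^1$-sphere cardinality bound \eqref{S_1_bound}, keeping careful track that the answer is polynomial (indeed quadratic) in $\ell$ and dimension-independent — the combination $\ell(\ell+1)$ rather than $\ell$ being the tell-tale sign that a somewhat wasteful but robust estimate is used there.
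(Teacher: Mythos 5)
Your final argument is the paper's proof: pull $p_{x,y}^K \le c/d$ out of both indices (you derive this correctly via $p_{x,y}^K \le P_x[H_y<\infty]=g(x-y)/g(0)$ together with \eqref{return_probas1}), bound the diagonal by $g(0)=O(1)$ and the off-diagonal entries by $g(y-z)\le c/d$, and use $|U|\le \ell d$ to get $\mathrm{Var}(\zeta)\le c(\ell d)d^{-2}+c(\ell d)^2 d^{-3}=c\,\ell(\ell+1)/d$; the Gaussian tail bound then finishes. The first two paragraphs of your proposal are an unproductive detour: there is no need to organize $U$ into worst-case $\ell^1$-balls, invoke isoperimetry, or track volume growth, because the elementary pointwise bound $g(z)\le c/d$ for $z\ne 0$ (which, like your $p$-bound, reduces to \eqref{return_probas1}) replaces all of that in one line. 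Two minor points worth fixing: first, in the display at the end, after you pull one $p$-factor out as $c/d$ you are left with $\sum_y p^K_{x,y}\sum_{y'\in U}g(y,y')\le 1\cdot c(\ell+1)$, not $c\,\ell(\ell+1)$ — the $\ell(\ell+1)$ in the lemma arises from applying $p^K_{x,\cdot}\le c/d$ and $|U|\le\ell d$ symmetrically in both indices, which is how the paper does it, and either accounting lands inside the claimed bound; second, \eqref{GRV_basic_estimate} has a $1/h$ prefactor and is only a bound $\le e^{-h^2/2}$ when $h\ge 1$, whereas the paper's exponential Chebyshev estimate, $\mathbb{P}[\zeta>\varepsilon h_{\mathrm{as}}]\le\inf_{\lambda>0}e^{-\lambda\varepsilon h_{\mathrm{as}}+\lambda^2\mathrm{Var}(\zeta)/2}=\exp\{-\varepsilon^2 h_{\mathrm{as}}^2/2\mathrm{Var}(\zeta)\}$, gives the prefactor-free bound unconditionally, which is what the uniform-in-$(\ell,\varepsilon,d)$ statement of the lemma actually requires.
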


\begin{proof}
Let $\ell > 0$, $x \in \mathbb{Z}^d$, $\varepsilon > 0$, and the sets $K,U$ be fixed as to satisfy the above assumptions. We abbreviate $\zeta_x =  \sum_{y\in U} p_{x,y}^K \varphi_y$, which is a centered Gaussian variable. We compute, for arbitrary $\lambda > 0$,
\begin{equation} \label{var_bound1}
\begin{split}
\text{Var}( \lambda \zeta_x) = \mathbb{E}[(\lambda \zeta_x)^2] 
&=  \lambda^2 \sum_{y,z \in U} p_{x,y}^K \, p_{x,z}^K \, \mathbb{E}[\varphi_y \varphi_z] \\
&=  \lambda^2 \Big\{ g(0) \sum_{y \in U} (p_{x,y}^K)^2 + \sum_{\substack{ y,z \in U \\ y\neq z}} p_{x,y}^K \, p_{x,z}^K \, g(y - z) \Big\}.
\end{split}
\end{equation}
Observe that for all $y \in U$ and $d\geq 3$,
\begin{equation} \label{var_bound2}
p_{x,y}^K \stackrel{\eqref{EQ:p_x,y^K}}{=} P_{x}[H_{K} < \infty, X_{H_{K}}= y] \leq  P_{x}[H_y < \infty]   \leq c/d,
\end{equation}
where the last step follows from an elementary application of the strong Markov property at time $H_{S_1(y,1)}$ (recall that $x \notin U$) and \eqref{return_probas1}. Moreover, for all $z \in \mathbb{Z}^d \setminus \{ 0\}$ and $d \geq 3$, by the strong Markov property at time $H_z$,
\begin{equation}\label{var_bound3}
g(z)= P_0[H_z < \infty]\cdot g(0) \stackrel{\eqref{g(0)}, \eqref{var_bound2}} \leq c/d.
\end{equation}
Inserting the bounds \eqref{var_bound2}, \eqref{var_bound3} into \eqref{var_bound1} and using again that $g(0) = O(1)$ as $d\to \infty$ (cf. \eqref{g(0)}) gives
\begin{equation*}
\begin{split}
\text{Var}(\lambda \zeta_x) \leq c \lambda^2  ( d^{-2}|U| + d^{-3}|U|^2) \leq c \lambda^2 d^{-1} (\ell + \ell^2),
\end{split}
\end{equation*}
where the second step follows because $|U| \leq \ell d$ by assumption. Thus, by Markov's inequality, we obtain, for all $\lambda >0$ and $d\geq 3$,
\begin{equation*}
\mathbb{P} [ \zeta_x > \varepsilon h_{\text{as}}] \leq  e^{- \lambda \varepsilon h_{\text{as}}} \cdot e^{\text{Var}( \lambda \zeta_x)/2} \leq \exp \Big\{-  \frac{1}{2c \ell (\ell +1)} d (\varepsilon h_{\text{as}})^2\Big\},
\end{equation*}
where the last step follows by optimizing over $\lambda$, which occurs for the choice $\lambda =   \varepsilon d  h_{\text{as}} / c \ell (\ell + 1)$ (with $c$ the constant appearing in the bound for $\text{Var}(\lambda \zeta_x)$ above). On account of \eqref{EQ:h_as}, and since $g(0) \geq 1$, this implies \eqref{zeta_i_bound}, and thus completes the proof of Lemma \ref{L:zeta_i_bound}. 
\end{proof}

We proceed to the main result of this section. For convenience, we introduce the shorthand
\begin{equation} \label{EQ:h_epsilon}
h_{\text{as}}^{( \varepsilon)} = h_{\text{as}}(1+ \varepsilon), \text{ for $\varepsilon > 0$}.
\end{equation}

\begin{theorem} \label{T:LOCAL_CB} $(\varepsilon > 0)$

\medskip
\noindent There exist constants $c_4(\varepsilon) \geq 1$ and $c_5>0$ such that for all $N \geq c_4(\varepsilon)$ and $d \geq c(\varepsilon, N)$,
\begin{equation} \label{T:LOCAL_CB1}
\mathbb{P}[0 \stackrel{\geq h_{\textnormal{as}}^{(\varepsilon)}}{\longleftrightarrow} S_1(0, Nd)] \leq \exp \{- c_5 \, f(\varepsilon, N) \cdot d \log d \},
\end{equation}
where $f(\varepsilon, N)= \varepsilon^3 \sqrt{ N /(1+ \varepsilon)}$.
\end{theorem}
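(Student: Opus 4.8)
The plan is to discover the field $\varphi$ along a would-be crossing path using the inductive construction of Lemma~\ref{L:conds_exps2}, and to show that passing from $\varphi$ to the independent field $\psi$ costs essentially nothing, so that the event $\{0 \stackrel{\geq h_{\text{as}}^{(\varepsilon)}}{\longleftrightarrow} S_1(0,Nd)\}$ gets compared to a crossing event for an \emph{independent} Gaussian field with variance $g(0) \to 1$. For the latter, the probability that a fixed self-avoiding nearest-neighbor path of length $k$ lies in the level set $\{\psi \geq h\}$ is at most $\big(\mathbb{P}[\psi_0 \geq h]\big)^{\lceil k/2 \rceil}$ (alternate vertices are independent), and with $h = h_{\text{as}}^{(\varepsilon)} \approx \sqrt{2g(0)\log d}\,(1+\varepsilon)$ the bound \eqref{GRV_basic_estimate} gives $\mathbb{P}[\psi_0 \geq h] \leq d^{-(1+\varepsilon)^2 + o(1)}$; since any path from $0$ to $S_1(0,Nd)$ has length at least $Nd$, and the number of such self-avoiding paths of length $k$ is at most $(2d)^k$, a union bound over $k \geq Nd$ kills the entropy $(2d)^k$ against $d^{-(1+\varepsilon)^2 \lceil k/2\rceil}$ as soon as $\varepsilon$ is bounded below, yielding a bound of the shape $\exp\{-c\,\varepsilon\, Nd\log d\}$ for the independent field. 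This is already stronger than \eqref{T:LOCAL_CB1} (no $\varepsilon^3$ loss), so the entire loss in the stated bound comes from the perturbative comparison.

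The comparison step is the heart of the matter. Fix a self-avoiding path $\pi = (x_0 = 0, x_1, \dots, x_k)$ reaching $S_1(0,Nd)$, let $K = \{x_0,\dots,x_k\}$ and enumerate $K$ along the path. Using \eqref{L:conds_exps2_phi}, $\varphi_{x_n} = \psi_{x_n} + \mu_{x_n}^{K_n}$ where $\mu_{x_n}^{K_n} = \sum_{i<n} p_{x_n, x_i}^{K_n}\varphi_{x_i}$ is the harmonic average of the already-revealed values. The event $\{\pi \subseteq E_\varphi^{\geq h}\}$ forces $\psi_{x_n} \geq h - \mu_{x_n}^{K_n}$ for every $n$. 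I would split: either all the shifts satisfy $\mu_{x_n}^{K_n} \leq \varepsilon' h_{\text{as}}$ (with $\varepsilon'$ a fixed fraction of $\varepsilon$, to be optimized), in which case $\pi \subseteq E_\psi^{\geq h - \varepsilon' h_{\text{as}}} = E_\psi^{\geq h_{\text{as}}(1+\varepsilon-\varepsilon')}$, an independent-field crossing at a level still above $h_{\text{as}}$, handled by the previous paragraph; or some $\mu_{x_n}^{K_n}$ is large, which is the ``bad'' event to be estimated. Controlling the bad event is exactly where Lemma~\ref{L:zeta_i_bound} enters: the shift $\mu_{x_n}^{K_n} = \sum_{y \in K_n} p_{x_n,y}^{K_n}\varphi_y$ is of the form treated there with $U = K_n \subseteq K$, $|K_n| \leq k+1$, $x = x_n$. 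But \eqref{zeta_i_bound} deteriorates like $\ell^{-2}$ when $|U| \leq \ell d$, so it is useless for paths much longer than order $d$; hence one cannot allow the full path into $K$ at once. The key device must be to truncate: only the portion of the path within $\ell^1$-distance $O(d)$ of $x_n$ can contribute substantially to $\mu_{x_n}^{K_n}$, because the return/hitting probabilities $p_{x_n,y}^{K}$ for $y$ far from $x_n$ are extremely small (using \eqref{return_probas1}, \eqref{c_0}, \eqref{var_bound2} and the exponential-in-$\ell^1$-distance decay of $g$ in high dimension). So I would write $\mu_{x_n}^{K_n} = \mu^{\text{near}} + \mu^{\text{far}}$, bound $\mu^{\text{far}}$ deterministically (or with an easy first-moment/union bound over the few far vertices, using the fast decay) by $\tfrac{\varepsilon'}{2} h_{\text{as}}$ once $N \geq c_4(\varepsilon)$, and apply Lemma~\ref{L:zeta_i_bound} with $\ell = O(1)$ (say $\ell = c_4(\varepsilon)$) to $\mu^{\text{near}}$, getting $\mathbb{P}[\mu^{\text{near}}_{x_n} > \tfrac{\varepsilon'}{2}h_{\text{as}}] \leq \exp\{-c\varepsilon'^2 d\log d\}$ \emph{per vertex} $x_n$.

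Assembling: the bad event is a union over the $\leq k$ vertices of the path and over the $\leq (2d)^k \leq e^{k(1+\log 2 + \log d)}$ self-avoiding paths of length $k \geq Nd$ of the per-vertex bad events, each of probability $\exp\{-c\varepsilon'^2 d\log d\}$. For this to beat the entropy we need $c\varepsilon'^2 d\log d \gg 1 + \log d$, i.e. just $\varepsilon'^2 d \gg 1$, which holds for $d$ large; but we must also absorb the factor $k$ and, more importantly, the path-entropy $(2d)^k$. Here the point is that we should not take a union over all lengths blindly: if $\mu_{x_n}$ is large for some $n$, then in particular $\psi_{x_0}, \dots, \psi_{x_{n}}$ still had to be reasonably large (the path up to that point lies in a level set not too far below $h_{\text{as}}$), and we only pay entropy for the initial segment. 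Optimizing the split of $\varepsilon$ into $\varepsilon'$ (for the shift) and $\varepsilon - \varepsilon'$ (the residual level, which must stay $\geq 1$ after dividing by $h_{\text{as}}$ to keep the independent bound) against the exponent $\varepsilon'^2$ from Lemma~\ref{L:zeta_i_bound} and against $N$ from the length is what produces the exponent $f(\varepsilon,N) = \varepsilon^3\sqrt{N/(1+\varepsilon)}$: roughly, one takes $\varepsilon' \asymp \varepsilon$, loses one power of $\varepsilon$ from squaring versus the linear gain $\varepsilon N d \log d$ in the independent part, and the $\sqrt{N}$ reflects choosing the truncation radius $\asymp \sqrt{N}\,d$ or balancing the two regimes of path length. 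I expect the main obstacle to be precisely this bookkeeping — keeping the path-entropy $(2d)^k$ under control uniformly in $k$ while only invoking the $O(d)$-range version of Lemma~\ref{L:zeta_i_bound}, and tracking how the three competing small parameters combine — rather than any single hard estimate; all the analytic inputs (the Green-function bounds of the Lemma of Section~\ref{NOTATION}, \eqref{GRV_basic_estimate}, and Lemma~\ref{L:zeta_i_bound}) are already in place.
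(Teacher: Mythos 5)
Your overall strategy — discover $\varphi$ along a putative crossing via Lemma~\ref{L:conds_exps2}, pass to the independent field $\psi$, split the harmonic shift $\mu^{K_n}_{x_n}$ into ``near'' and ``far'' parts, and fetch the near part from Lemma~\ref{L:zeta_i_bound} while the far part dies by the exponential decay of $g$ — is exactly the philosophy of the paper's proof of Theorem~\ref{T:LOCAL_CB}. But the implementation you sketch has two real gaps that the paper resolves by a structurally different union bound.

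First, your near/far split is over the \emph{whole} self-avoiding path $K_n=\{x_0,\dots,x_{n-1}\}$. If ``near'' means $\ell^1$-distance $\leq Cd$ from $x_n$, then a long self-avoiding path can re-enter that neighborhood many times; since $|B_1(x_n,Cd)|$ grows exponentially in $d$, cf.~\eqref{S_1_bound}, the near set can have cardinality $\gg \ell d$, and Lemma~\ref{L:zeta_i_bound} with $\ell=O(1)$ simply does not apply. If instead ``near'' means the last $O(d)$ steps along the path, then the ``far'' part is no longer uniformly far in $\ell^1$-distance (early segments may loop back), and the exponential decay you invoke fails. Second, the entropy over lengths $k\ge Nd$: you correctly flag it as ``the main obstacle'' but the sketched fix (pay entropy only up to the first bad time) makes the bad-event contribution saturate at roughly $\exp\{-c\varepsilon^2 d\log d\}$, with no growth in $N$. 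Since the whole point of the theorem — see Remark~\ref{R:renorm} 2) — is that the exponent $f(\varepsilon,N)d\log d$ can be made as large as needed by choosing $N=N(\varepsilon)$, this would not suffice to seed the renormalization.

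The paper resolves both issues simultaneously by \emph{not} working with the whole path. It extracts $N_0\asymp N$ ``local traces'' $\pi_0,\dots,\pi_{N_0-1}$, each a self-avoiding segment of fixed short length $\lfloor \ell d\rfloor$ started at the successive hitting points of the spheres $S_1(0,nc_6 d)$, with $\ell\asymp \varepsilon/\sqrt{N_0(1+\varepsilon)}$. The union bound is only over these $N_0$ starting points and short segments — not over paths of all lengths — giving the controlled complexity $\mathbf{C}_d=\exp\{(1+\varepsilon)N_0\lfloor\ell d\rfloor\log d\}$. And because different segments start in concentric annuli spaced $c_6 d$ apart, one automatically has both $|U_i|\le \ell d$ for the same-segment (``near'') part, so that Lemma~\ref{L:zeta_i_bound} applies directly, and $d_1(\pi_n,\pi_m)\ge 2c_0 d$ for the cross-segment (``far'') part, so that the hitting probability is $\le e^{-cd}$ via \eqref{c_0}. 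The factor $\sqrt{N/(1+\varepsilon)}$ in $f(\varepsilon,N)$ is then not a truncation radius $\asymp \sqrt{N}d$; the segments get \emph{shorter} ($\asymp \varepsilon d/\sqrt{N}$) as $N$ grows, and it is the total budget $N_0\cdot \ell d\asymp \varepsilon\sqrt{N}d$ of independent Gaussian marginals, each contributing a factor $\approx d^{-(1+\varepsilon/2)^2}$ against the per-site $d^{1+\varepsilon}$ from $\mathbf{C}_d$, that produces the $\varepsilon^2\cdot\varepsilon\sqrt{N}$ exponent.
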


We briefly outline the proof of Theorem \ref{T:LOCAL_CB}, which essentially comprises three steps. First, instead of considering the whole path connecting the origin to $S_1(0, Nd)$ directly, we look at the ``local traces'' it leaves after first visiting each of $N_0 = c N$ (for suitable $c \in (0,1)$) concentric $\ell^1$-annuli around the origin having a width of order $d$ (similarly to what was done in \cite{SUB} in the context of interlacement percolation). We are led to consider the probability that $N_0$ ``well-separated'' paths in $E_\varphi^{\geq h_{\textnormal{as}}^{(\varepsilon)}}$ of length $ \lfloor \ell d \rfloor $ each, for some $0 < \ell \leq 1$, all occur simultaneously (which competes against a suitable combinatorial complexity). The parameter $\ell$ will have to be carefully chosen a posteriori. In a second step, we discover the field along a fixed collection of paths ``dynamically,'' using Lemma \ref{L:conds_exps2}, as alluded to in the Introduction. Finally, Lemma \ref{L:zeta_i_bound} will provide good controls on the error terms that arise.  

\begin{proof}
We define
\begin{equation} \label{bar_c_6}
c_6 = 2(\lceil c_0 \rceil + 1), \quad \text{(see \eqref{c_0} for the definition of $c_0$)}.
\end{equation}
Let $\varepsilon > 0$  be fixed and assume $N \geq c_6$ (stronger conditions on $N$ will follow). Instead of working with $N$ directly, it will be convenient to use
\begin{equation} \label{Nhat}
N_0 = \lfloor  N / c_6 \rfloor \ (\geq 1).
\end{equation}
We also introduce a parameter
\begin{equation} \label{eta}
0 < \ell = \ell(\varepsilon,N_0) \leq 1
\end{equation}
to be specified later. Given a vertex $x \in \mathbb{Z}^d$, we denote by $\Pi_\ell (x)$ the set of all self-avoiding nearest-neighbor paths of length $\lfloor \ell d \rfloor $ starting in $x$ (by convention, if $ \lfloor \ell d \rfloor = 0$, the set consists only of the vertex $x$ itself). By construction, any path connecting the origin to $S_1(0, Nd)$ intersects all spheres $S_1(0, n\, c_6 d)$, for $0 \leq n \leq N_0$. Moreover, any self-avoiding path $\pi \in \Pi_\ell(x)$, with $x \in S_1(0, n \,c_6d)$ for some $0 \leq n < N_0$ and $0<\ell \leq 1$, satisfies  $\text{range}(\pi) \subset B_1(0, Nd)$. Thus,
\begin{equation} \label{Peierls1}
\begin{split}
&\mathbb{P}[0 \stackrel{\geq h_{\text{as}}^{(\varepsilon)}}{\longleftrightarrow} S_1(0, Nd)] \\
&\qquad \leq \mathbb{P}\Big[\bigcup_{\substack{x_n \in S_1(0, n c_6 d) \\ 0 \leq n < N_0 }} \, \bigcup_{\substack{\pi_n \in \Pi_\ell(x_n) \\ 0 \leq n < N_0 }} \big\{ E_\varphi^{ \geq h_{\text{as}}^{(\varepsilon)}} \supseteq \text{range}(\pi_n), 0 \leq n < N_0 \big\}\Big] \\
&\qquad \leq \sum_{\substack{x_n \in S_1(0, nc_6 d) \\ 0 \leq n < N_0 }} \, \sum_{\substack{\pi_n \in \Pi_\ell(x_n) \\ 0 \leq n < N_0 }} \mathbb{P} \big[ E_\varphi^{ \geq h_{\text{as}}^{(\varepsilon)}} \supseteq \text{range}(\pi_n), 0 \leq n < N_0 \big],
\end{split}
\end{equation}
for all $N_0 \geq 1$ (i.e. $N\geq c_6$) and $0<\ell \leq 1$ (see also Figure \ref{F:local_trace} below). For later reference, we note that given two paths $\pi_n \in \Pi_\ell(x_n)$ and $\pi_m \in \Pi_\ell(x_m)$ for some $0< \ell \leq 1$, with $0 \leq n < m < N_0$, $x_{n} \in S_1(0, {n}c_6d)$ and $x_{m} \in S_1(0, {m}c_6d)$,
\begin{equation} \label{dist_pi_n}
d_1(\pi_n, \pi_m) \geq |x_{m}-x_{n}|_1 - 2 \ell d \geq c_6 d - 2d \stackrel{\eqref{bar_c_6}}{\geq} 2 c_0 d. 
\end{equation}
\begin{figure} [h!] 
\centering
\psfragscanon
\includegraphics[width=15cm]{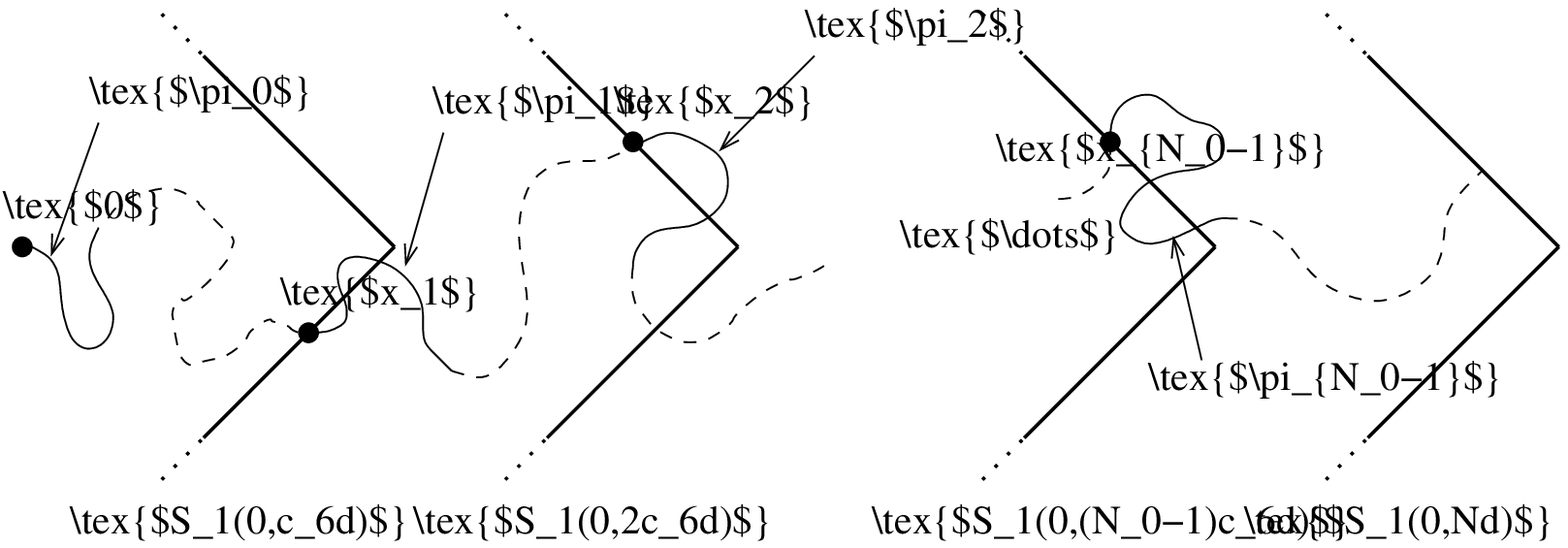}
\caption{The collection $(\pi_n)_{0\leq n <N_0}$ of ``local traces'' left by a self-avoiding path in $E_{\varphi}^{\geq h_{\text{as}}^{(\varepsilon)}}$ connecting the origin to $S_1(0,Nd)$, obtained by considering the first $\lfloor \ell d \rfloor$ steps after first hitting each of the spheres  $S_1(0, {n}c_6d)$, for $0\leq n < N_0$ (when viewed as starting from $0$). The paths $\pi_n$, $0\leq n < N_0$, are in fact ``well-separated,'' as captured by \eqref{dist_pi_n}.}
\label{F:local_trace}
\end{figure}
Since $|\Pi_\ell(x)| \leq (2d)^{\lfloor \ell d \rfloor}$ for all $d \geq 3$, $0< \ell \leq 1$ and  $x \in \mathbb{Z}^d$, and using the estimate \eqref{S_1_bound} on the cardinality of $\ell^1$-spheres, the number of terms appearing in the double sum of \eqref{Peierls1} is bounded by
\begin{equation*}
\begin{split}
\prod_{n=0}^{N_0-1}|S_1(0, nc_6d)|\cdot (2d)^{\lfloor \ell d \rfloor} &\leq \exp \Big\{  N_0 \lfloor \ell d \rfloor \log(2d) + d \sum_{n=0}^{N_0-1}(n c_6  +2)\Big\} \\
&= \exp \Big\{ N_0  \Big( \lfloor \ell d \rfloor \log (2d) + \frac{c_6 (N_0-1)d}{2} + 2d \Big) \Big\} \\ 
&\leq \exp \big\{ (1+ \varepsilon)  N_0 \lfloor \ell d \rfloor  \log d \big\} \\
&\stackrel{\text{def.}}{=} \mathbf{C}_d(\varepsilon, N_0 ,\ell)  
\end{split}
\end{equation*}
for all $N_0 \geq 1$, $0<  \ell \leq 1$ and $d \geq c( \varepsilon, N_0, \ell)$. Returning to \eqref{Peierls1}, this yields
\begin{equation}\label{Peierls2}
\begin{split}
&\mathbb{P}[0 \stackrel{\geq h_{\text{as}}^{(\varepsilon)}}{\longleftrightarrow} S_1(0, Nd)] \\
& \qquad \leq \mathbf{C}_d(\varepsilon, N_0 ,\ell) \cdot \sup_{\substack{x_n \in S_1(0, nc_6d) \\ \pi_n \in \Pi_\ell (x_n) \\ 0 \leq n < N_0}} \mathbb{P} [ E_\varphi^{ \geq h_{\text{as}}^{(\varepsilon)}} \supseteq \text{range}(\pi_n), 0 \leq n <N_0 ],
\end{split}
\end{equation}
for all $N_0 \geq 1$, $0<  \ell \leq 1$ and $d \geq c( \varepsilon, N_0, \ell)$. We now focus on the probability appearing on the right-hand side of \eqref{Peierls2}. Thus, we fix $N_0$ self-avoiding paths
\begin{equation*}
\begin{split}
&\pi_n = (x_{n,k})_{0\leq k \leq \lfloor \ell d \rfloor} \in \Pi_\ell (x_{n,0}), \text{ with $x_{n,0} \in S_1(0, nc_6d)$ for all $0\leq n < N_0$,} 
\end{split}
\end{equation*}
and define
\begin{equation} \label{EQ:local_conn_K}
K = \bigcup_{n=0}^{N_0-1}\text{range}(\pi_n), \text{ so that $|K| = N_0 \lceil \ell d \rceil $}.
\end{equation}
For notational convenience, we introduce the set of labels 
\begin{equation*}
I = \{0,\dots, N_0-1 \} \times \{ 0,\dots, \lfloor \ell d \rfloor \} \ni i = (i_1, i_2)
\end{equation*}
(where $i_k$, $k=1,2$, denote the coordinates of $i$, with values in $\{0,\dots, N_0-1 \}$ and $\{ 0,\dots, \lfloor \ell d \rfloor \}$, respectively), so that $\text{range} (\pi_n) = \{ x_i ; \, i \in I, i_1= n  \}$ for all $0\leq n < N_0$ and $K= \{ x_i ; \, i \in I \}$. We further denote by $\prec$ the lexicographic order on $I$. This induces an ordering of the points in $K$. Finally, we also set
\begin{equation}\label{K_i}
K_{i} = \{ x_j ; \, j \prec i \} \  (\subset K), \text{ for $i \in I$}.
\end{equation}
The event
\begin{equation} \label{event_START}
\{ E_\varphi^{ \geq h_{\text{as}}^{(\varepsilon)}} \supseteq \text{range}(\pi_n), 0 \leq n < N_0 \} = \bigcap_{i \in I} \{\varphi_{x_i} \geq h_{\text{as}}^{(\varepsilon)} \}
\end{equation}
is measurable with respect to the $\sigma$-algebra generated by $\varphi_x$, $x \in K$, and we will now construct the field $(\varphi_x)_{x\in K}$ using Lemma \ref{L:conds_exps2}, by adding one variable at a time, according to the ordering \nolinebreak $\prec$ (thus, $K_{i}$ as defined in \eqref{K_i} denotes the set of points at which the field has been discovered ``before time $i$''). Specifically, we proceed as follows. Let $\psi_i$, $i \in I$, be an independent family of Gaussian random variables, with
\begin{equation}\label{psi_i}
\psi_{i} \sim \mathcal{N} (0,g_{K_{ i} ^c}(x_i,x_i)), \text{ for all $i \in I$}
\end{equation}
(see \eqref{GreenFunctionSubK} for the definition of the killed Green function), and recall from \eqref{EQ:p_x,y^K} that $p_{x, y}^{K_i} = P_{x}[H_{K_{ i} } < \infty, X_{H_{K_{  i} }}= y]$, for all $x,y \in \mathbb{Z}^d$. We define recursively
\begin{equation} \label{phi_K}
\begin{split}
&\varphi_{x_i}= \psi_{i}, \text{ for $i = (0,0)$}, \\
&\varphi_{x_{i}} = \psi_i + \sum_{y \in K_i} p_{x_i, y}^{K_i} \cdot \varphi_{y},  \text{ for $i \in I \setminus \{ (0,0) \} $}.
\end{split}
\end{equation}
By Lemma \ref{L:conds_exps2}, the law of $(\varphi_{x_i})_{i \in I}$ as defined above is precisely that of the Gaussian free field restricted to $K$. By \eqref{dist_pi_n}, the main contribution to the sum on the right-hand side of \eqref{phi_K} will come from the points which belong to the same path as $x_i$ (i.e. to $\pi_{i_1}$). Accordingly, we introduce the sets
\begin{equation} \label{overlineK_i}
U_i \stackrel{\text{def.}}{=} \{ x_j ; \, j \prec i \text{ and } j_1 = i_1 \} \ (\subset K_i), \text{ for $i \in I$},
\end{equation}
and obtain the decomposition,
\begin{equation} \label{phi_K_2}
\varphi_{x_i}=  \psi_i \,  + \,  \zeta_i  \, +  \, \sum_{y \in K_i \setminus U_i} p_{x_i, y}^{K_i}  \varphi_{y}, \text{ with $\, \zeta_i =  
\sum_{y \in U_i} p_{x_i, y}^{K_i}  \varphi_{y}$, for all $i \in I$,}
\end{equation}
where the above sums are understood to vanish identically whenever the summation is over the empty set. Now, observe that for all $i \in I$, on the event $\{ \varphi_{x_i} \geq h_{\text{as}}^{(\varepsilon)} \} \cap \{\zeta_i \leq \varepsilon  h_{\text{as}} /4 \}$,
\begin{equation} \label{event_split_1}
\begin{split}
h_{\text{as}}(1+ \varepsilon) = h_{\text{as}}^{(\varepsilon)}
&\stackrel{\eqref{phi_K_2}}{\leq } \psi_i + \zeta_i +  \sum_{y \in K_i \setminus U_i} p_{x_i, y}^{K_i}  \varphi_{y} \\
&\stackrel{\eqref{EQ:p_x,y^K}}{\leq} \psi_i + \frac{\varepsilon}{4} h_{\text{as}} + 1_{\{i_1 > 0\}} P_{x_i} [H_{K_i \setminus U_i} < \infty] \cdot \sup_{y \in K_i \setminus U_i} \varphi_y.
\end{split}
\end{equation}
In order to bound the hitting probability $P_{x_i} [H_{K_i \setminus U_i} < \infty]$, we note that, by construction, cf. \eqref{dist_pi_n} and \eqref{overlineK_i},
\begin{equation} \label{dist_pi_n2}
d_1(  x_i , K_i \setminus U_i) \geq 2c_0 d, \text{ for all $i \in I$ with $i_1 > 0$}.
\end{equation}
Thus, we can find $c > 0$ such that
\begin{equation} \label{easy_hittingprobas}
\begin{split}
P_{x_i} [H_{K_i \setminus U_i} < \infty] 
&\stackrel{\eqref{1.13}}{\leq} \text{cap}(K_i \setminus U_i) \cdot \sup_{y \in K_i \setminus U_i} g(y-x_i) \\
&\stackrel{\eqref{c_0}}{\leq} |K_i \setminus U_i | \cdot 2^{-d/2 -2} \leq e^{-cd},
\end{split}
\end{equation}
for all $i \in I$ with $i_1 >0$ and all $d \geq c'(\varepsilon, N_0, \ell)$, where \eqref{c_0} applies because of \eqref{dist_pi_n2}, and we have bounded $|K_i \setminus U_i | \leq |K|  = N_0 \lceil \ell d \rceil$ using \eqref{1.10}, \eqref{1.11} and \eqref{EQ:local_conn_K}. Inserting \eqref{easy_hittingprobas} into \eqref{event_split_1} yields that there exists a suitable constant $c_7 >0$ such that for all $i \in I$ and $d \geq c(\varepsilon, N_0, \ell)$, on the event $\{ \varphi_{x_i} \geq h_{\text{as}}^{(\varepsilon)} \} \cap \{\zeta_i \leq \varepsilon  h_{\text{as}} /4 \}$,
\begin{equation*} 
\begin{split}
h_{\text{as}}(1+ \varepsilon) = h_{\text{as}}^{(\varepsilon)} \leq \varphi_{x_i} 
&\leq \psi_i + \frac{\varepsilon}{4} h_{\text{as}} + \frac{\varepsilon}{4} h_{\text{as}} \cdot e^{-c_7 d} \cdot \sup_{y \in K_i \setminus U_i} \varphi_y
\end{split}
\end{equation*}
(with the convention $\sup \emptyset = 0$, which occurs if $K_i \setminus U_i = \emptyset$, i.e. $i_1 =0$). Thus, for all $i \in I$ and $d \geq c(\varepsilon, N_0, \ell)$,
\begin{equation*}
\Big( \{ \varphi_{x_i} \geq h_{\text{as}}^{(\varepsilon)} \} \cap \{\zeta_i \leq \varepsilon  h_{\text{as}} /4 \} \cap \Big\{ \sup_{y \in K_i \setminus U_i} \varphi_y \leq  e^{c_7 d} \Big\} \Big) \ \subseteq \{ \psi_i \geq h_{\text{as}}^{(\varepsilon/2)} \},
\end{equation*}
and therefore
\begin{align*}
\{ \varphi_{x_i} \geq h_{\text{as}}^{(\varepsilon)} \} &\subseteq \Big(  \{ \psi_i \geq h_{\text{as}}^{(\varepsilon/2)} \} \cup \Big\{ \sup_{y \in K_i \setminus U_i} \varphi_y >  e^{c_7 d} \Big\} \cup \{\zeta_i > \varepsilon  h_{\text{as}} /4 \}  \Big) \\
&\subseteq \Big(\{ \psi_i \geq h_{\text{as}}^{(\varepsilon/2)} \} \cup \Big\{ \sup_{y \in K} \varphi_y >  e^{c_7 d} \Big\} \cup \bigcup_{j \in I }\{\zeta_j > \varepsilon  h_{\text{as}} /4 \}  \Big).
\end{align*}
Going back to our initial event in \eqref{event_START}, by a union bound, we obtain, for all $N_0 \geq 1$, $0< \ell \leq 1$ and $d \geq c(\varepsilon, N_0, \ell)$,
\begin{equation}\label{event_SPLITTING}
\begin{split}
&\mathbb{P} \big[ E_\varphi^{ \geq h_{\text{as}}^{(\varepsilon)}} \supseteq \text{range}(\pi_n), 0 \leq n < N_0 \big]  \\
&\qquad \leq  \prod_{i \in I } \mathbb{P}[\psi_i \geq h_{\text{as}}^{(\varepsilon/2)}] + \mathbb{P}\Big[ \sup_{y \in K} \varphi_y >  e^{c_7 d}\Big] + |I| \cdot \sup_{i \in I} \mathbb{P}[\zeta_i > \varepsilon  h_{\text{as}} /4] ,
\end{split}
\end{equation}
where we also used independence of the variables $\psi_i$, $i \in I$, see above \eqref{psi_i}. We consider each of the three terms in \eqref{event_SPLITTING} separately. Recall the definition of the combinatorial complexity $\mathbf{C}_d$ above \eqref{Peierls2}.
\begin{lemma} \label{L:MASTER_FORMULA_BD}
\begin{align}
&\prod_{i \in I } \mathbb{P}[\psi_i \geq h_{\text{as}}^{(\varepsilon/2)}] \leq  \mathbf{C}_d^{-1} (\varepsilon, N_0, \ell) \cdot e^{- c \varepsilon^2 N_0 \lceil \ell d \rceil \log d}, \text{ for $N_0 \geq 1$, $0< \ell \leq 1$, $d \geq c'$}.\label{EQ:local_conn_final3} \\
&\mathbb{P}\Big[ \sup_{y \in K} \varphi_y >  e^{d}\Big] \leq \mathbf{C}_d^{-2}(\varepsilon, N_0, \ell), \text{ for  $N_0 \geq 1$, $0 < \ell \leq 1$, $d \geq c(\varepsilon, N_0, \ell)$}. \label{EQ:local_conn_final2}
\end{align}
Moreover, for the choice of
\begin{equation} \label{EQ:local_conn_l}
\ell =  \ell(\varepsilon, N_0) = 1  \wedge \sqrt{\frac{c_8 \varepsilon^2}{4(1 + \varepsilon)N_0}}, 
\end{equation}
one has
\begin{equation} \label{EQ:local_conn_final1}
|I| \cdot  \sup_{i \in I} \mathbb{P}[\zeta_i > \varepsilon  h_{\text{as}} /4] \leq \mathbf{C}_d^{-2}(\varepsilon, N_0), \text{ for all $N_0 \geq 1$ and $d \geq c(\varepsilon, N_0)$}.
\end{equation}
\end{lemma}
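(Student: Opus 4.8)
The plan is to handle the three estimates separately; the first two are soft, and the third is the one that dictates the choice \eqref{EQ:local_conn_l} of $\ell$. For \eqref{EQ:local_conn_final3}, the key observation is that the variance $\sigma_i^2 := g_{K_i^c}(x_i,x_i)$ of each $\psi_i$ obeys $1 \le \sigma_i^2 \le g(0)$: the upper bound follows from \eqref{G-GsubK} applied with $K = K_i^c$ (the correction term being nonnegative), and the lower one because $x_i \notin K_i$, so that the $n=0$ term in \eqref{GreenFunctionSubK} already contributes $1$. Inserting $\sigma_i^2 \le g(0)$ and the level $h_{\textnormal{as}}^{(\varepsilon/2)} = (1+\tfrac{\varepsilon}{2})\sqrt{2g(0)\log d}$ into the Gaussian tail bound \eqref{GRV_basic_estimate} gives $\mathbb{P}[\psi_i \ge h_{\textnormal{as}}^{(\varepsilon/2)}] \le (2\pi)^{-1/2}\sigma_i\,(h_{\textnormal{as}}^{(\varepsilon/2)})^{-1}\,d^{-(1+\varepsilon/2)^2}$, and since $h_{\textnormal{as}} \ge \sqrt{2\log d}$ the prefactor is at most $1$ as soon as $d \ge 3$. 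Multiplying over the $|I| = N_0(\lfloor\ell d\rfloor+1)$ independent factors, \eqref{EQ:local_conn_final3} reduces to the elementary inequality $(1+\tfrac{\varepsilon}{2})^2(\lfloor\ell d\rfloor+1) \ge (1+\varepsilon)\lfloor\ell d\rfloor + c\,\varepsilon^2\lceil\ell d\rceil$, which holds with $c = \tfrac{1}{8}$ upon using $(1+\tfrac{\varepsilon}{2})^2 = 1+\varepsilon+\tfrac{\varepsilon^2}{4}$ and $\lceil\ell d\rceil \le \lfloor\ell d\rfloor+1$; no constraint on $d$ beyond a numerical one is needed.

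For \eqref{EQ:local_conn_final2} I would in fact prove the marginally more robust statement with $e^{c'd}$, $c'>0$ arbitrary but fixed, in place of $e^{d}$ (this also accommodates the truncation level $e^{c_7 d}$ that is actually used in the proof of Theorem \ref{T:LOCAL_CB}). Since $|K| = N_0\lceil\ell d\rceil \le N_0(d+1)$, one has, for $d \ge c(\varepsilon,N_0,\ell)$, that $\alpha := c_1^{-1}e^{c'd}$ satisfies $\alpha \ge 2\sqrt{\log|K|}$, so the BTIS bound \eqref{EQ:BTIS} applies and gives $\mathbb{P}[\sup_{y\in K}\varphi_y > e^{c'd}] \le e^{-(\alpha-\sqrt{\log|K|})^2} \le e^{-\alpha^2/4}$; this super-exponential (in $d$) decay trivially dominates $\mathbf{C}_d^{-2} \ge \exp\{-2(1+\varepsilon)N_0 d\log d\}$ for all large $d$.

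The heart of the matter is \eqref{EQ:local_conn_final1}, which is where Lemma \ref{L:zeta_i_bound} enters. Fix $i \in I$. By \eqref{overlineK_i}, $U_i$ is contained in $\textnormal{range}(\pi_{i_1})$, so $|U_i| = i_2 \le \lfloor\ell d\rfloor \le \ell d$, and moreover $U_i \subseteq K_i \subset\subset \mathbb{Z}^d\setminus\{x_i\}$. Applying Lemma \ref{L:zeta_i_bound} with $K = K_i$, $U = U_i$, $x = x_i$, and with the role of $\varepsilon$ there played by $\varepsilon/4$, we get $\mathbb{P}[\zeta_i > \varepsilon h_{\textnormal{as}}/4] \le \exp\{-c\,\varepsilon^2 d\log d/(16\,\ell(\ell+1))\}$, $c$ being the constant supplied by that lemma. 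Using $\ell \le 1$, hence $\ell(\ell+1) \le 2\ell$, the exponent is at least $c\,\varepsilon^2 d\log d/(32\ell)$; and with the choice \eqref{EQ:local_conn_l}, in which I take $c_8$ small enough relative to $c$ (concretely, any $c_8 \le c/32$ works, and likewise when the minimum in \eqref{EQ:local_conn_l} equals $1$), one checks $c\,\varepsilon^2/(32\ell) \ge 4(1+\varepsilon)N_0\ell$, so that $\sup_{i}\mathbb{P}[\zeta_i > \varepsilon h_{\textnormal{as}}/4] \le \exp\{-4(1+\varepsilon)N_0\ell\,d\log d\}$. Multiplying by $|I| \le N_0(d+1)$ and using $\lfloor\ell d\rfloor \le \ell d$, the left-hand side of \eqref{EQ:local_conn_final1} is then at most $\exp\{-2(1+\varepsilon)N_0\lfloor\ell d\rfloor\log d\} = \mathbf{C}_d^{-2}(\varepsilon,N_0)$ for all $d \ge c(\varepsilon,N_0)$, the polynomial correction $|I|$ being comfortably absorbed into the spare half of the exponent.

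The delicate point is \eqref{EQ:local_conn_final1}: the decay of $\mathbb{P}[\zeta_i > \varepsilon h_{\textnormal{as}}/4]$ furnished by Lemma \ref{L:zeta_i_bound} is of order $\exp\{-c\,\varepsilon^2 d\log d/\ell\}$, while the combinatorial factor $\mathbf{C}_d^2$ it must overcome equals $\exp\{2(1+\varepsilon)N_0\lfloor\ell d\rfloor\log d\}$; for $\ell$ of order $\varepsilon/\sqrt{(1+\varepsilon)N_0}$ both quantities are of order $\exp\{c\,\varepsilon\sqrt{(1+\varepsilon)N_0}\,d\log d\}$, so the gain comes only from a constant margin, which is exactly why $\ell$ must be taken of the precise form \eqref{EQ:local_conn_l} with $c_8$ sufficiently small. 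By contrast \eqref{EQ:local_conn_final3} rests on the strict gap $(1+\tfrac{\varepsilon}{2})^2 > 1+\varepsilon$ and \eqref{EQ:local_conn_final2} is a gross overestimate, so both are routine.
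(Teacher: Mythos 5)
Your proof is correct and follows essentially the same route as the paper's: the variance bound $\sigma_i^2\le g(0)$ plus \eqref{GRV_basic_estimate} for \eqref{EQ:local_conn_final3}, a crude Gaussian maximum bound for \eqref{EQ:local_conn_final2} (the paper uses \eqref{GRV_basic_estimate} with a union bound where you invoke BTIS, both trivial), and Lemma \ref{L:zeta_i_bound} applied with $K=K_i$, $U=U_i$ followed by the computation leading to \eqref{EQ:local_conn_l} for the crucial \eqref{EQ:local_conn_final1}. Your observation about the $e^{d}$ versus $e^{c_7 d}$ discrepancy between the lemma statement and its use in the proof of Theorem \ref{T:LOCAL_CB} is a fair catch, and your tidier formulation with $e^{c'd}$ does not change the argument.
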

\begin{proof9}
First, observe that
\begin{equation*}
\mathbb{E}[\psi_i^2] \stackrel{\eqref{psi_i}}{=} g_{K_i^c}(x_i,x_i) \stackrel{\eqref{G-GsubK}}{\leq} g(0) \stackrel{\eqref{EQ:c_1}}{<} c_1^2, \text{ for all $i \in I$ and $d \geq 3$}. 
\end{equation*}
Hence, the elementary tail estimate \eqref{GRV_basic_estimate} yields (note that $|I|= |K| = N_0 \lceil \ell d \rceil$, cf. \eqref{EQ:local_conn_K}),
\begin{equation*}
 \prod_{i \in I } \mathbb{P}[\psi_i \geq h_{\text{as}}(1 + \varepsilon/2 )] \leq \big(c_1 e^{- h_{\text{as}}^2(1+ \varepsilon/2)^2/2g(0) }\big)^{N_0 \lceil \ell d \rceil}, 
\end{equation*}
for all $N_0 \geq 1$, $0< \ell \leq 1$ and $d \geq c$, which, upon expanding $(1+ \varepsilon/2)^2$, immediately implies \eqref{EQ:local_conn_final3}. The estimate \eqref{EQ:local_conn_final2} is also a direct consequence of \eqref{GRV_basic_estimate} and a union bound (the ease with which this is obtained merely reflects the fact that for any $x_i$, $i = (i_1,i_2)\in I$, the shift produced by points lying on paths discovered ``before'' $\pi_{i_1}$ (if any), has little influence on $\varphi_{x_i}$, cf. \eqref{phi_K_2} and \eqref{easy_hittingprobas}).

It remains to show \eqref{EQ:local_conn_final1}. Observe that, by definition of $\zeta_i$, see \eqref{phi_K_2}, Lemma \ref{L:zeta_i_bound} with $K=K_i$ and $U=U_i$ (which satisfies $|U_i| \leq \ell d$, cf. \eqref{overlineK_i}), applies and yields (for a suitable $c_8>0$; this defines the constant appearing in \eqref{EQ:local_conn_l}),
\begin{equation*}
|I| \cdot \mathbb{P}[\zeta_i > \varepsilon  h_{\text{as}} /4] \leq \exp\{ -c_8 \varepsilon^2 d \log d / \ell (\ell +1)\}, 
\end{equation*}
for all $i \in I$, $N_0 \geq 1$, $0 < \ell \leq 1$ and $d \geq c (\varepsilon, N_0, \ell)$. Selecting $\ell $ as defined in \eqref{EQ:local_conn_l}, which, in particular, satisfies the requirements of \eqref{eta}, ensures that
\begin{equation} \label{EQ:crucial_l}
\frac{c_8 \varepsilon^2 d \log d}{\ell(\ell+1)} \geq 2 (1+ \varepsilon)\lfloor \ell d \rfloor N_0 \log d
\end{equation}
(the last term should be read as twice the exponent appearing in the combinatorial complexity $\mathbf{C}_d$), for all $N_0 \geq 1$ and $d\geq 3$, and \eqref{EQ:local_conn_final1} follows. \hfill $\square$
\end{proof9}

Substituting the bounds \eqref{EQ:local_conn_final3}, \eqref{EQ:local_conn_final2} and \eqref{EQ:local_conn_final1} into \eqref{event_SPLITTING}, we see that for the value of $\ell$ in \eqref{EQ:local_conn_l}, all $N_0 \geq 1$ and $d \geq c(\varepsilon, N_0)$,
\begin{equation*}
\mathbb{P} \big[ E_\varphi^{ \geq h_{\text{as}}^{(\varepsilon)}} \supseteq \text{range}(\pi_n), 0 \leq n < N_0 \big] \leq \mathbf{C}_d^{-1}(\varepsilon, N_0) \cdot e^{- c' \varepsilon^2 N_0  \ell d  \log d}
\end{equation*}
(the contribution from \eqref{EQ:local_conn_final3} is dominating). Finally, inserting this into \eqref{Peierls2}, noting that $\ell(\varepsilon, N_0) < 1$ whenever $N_0 \geq c_9 (\varepsilon)$ for some constant $c_9(\varepsilon) \geq 1$, cf. \eqref{EQ:local_conn_l}, and on account of \eqref{Nhat}, we  readily obtain \eqref{T:LOCAL_CB1}, with $c_4(\varepsilon) \stackrel{\text{def.}}{=} c_6 \cdot c_9(\varepsilon)$, and for a suitable value of $c_5>0$. This completes the proof of Theorem \ref{T:LOCAL_CB}.
\end{proof}

\begin{remark} The only feature of the function $f(\varepsilon, N)$ which will be of importance below is that $\lim_{N\to \infty} f(\varepsilon, N) = \infty$ for every $\varepsilon > 0$. For the sole purpose of proving Theorem \nolinebreak\ref{T:ASYMPTOTICS_UB}, any other function $f(\varepsilon, N)$ with this property would have sufficed. Moreover, while the term $\ell(\ell+1)$ appearing in the bound \eqref{zeta_i_bound} looks innocent when $\ell \in (0,1]$, it is crucially used in \eqref{EQ:crucial_l} to ensure that the resulting function $f(\varepsilon, \cdot)$ indeed grows at least like a positive power of $N$ (in fact one could even have obtained $f (\varepsilon, \cdot)= \Theta(N^{2/3})$ from \eqref{EQ:crucial_l}). \hfill $\square$
\end{remark}

\subsection{D\'enouement} \label{S:DENOUEMENT}

Finally, we complete the proof of Theorem \ref{T:ASYMPTOTICS_UB}. This will involve the renormalization scheme introduced in Section \ref{S:UB_RS}. As noted in Remark \ref{R:renorm}, 2), the seed estimate \eqref{p_0_cond} needed to initiate the renormalization is rather strong, and Theorem \ref{T:LOCAL_CB} will be of crucial use to establish it.
  
\begin{proof2}
Let $\varepsilon > 0$ be fixed. In order to establish \eqref{EQ:ASYMPTOTICS_UB}, we have to show that
\begin{equation} \label{T:UB_GOAL}
h_{**}(d) \leq h_{\text{as}}(d)(1 + \varepsilon), \text{ for all $d \geq c(\varepsilon)$}.
\end{equation}
To begin with, we select the parameters $L_0$, $l_0$ and $N$ in the renormalization scheme from Section \ref{S:UB_RS}, cf. \eqref{L_n}. We let
\begin{equation} \label{T:UB_parameters}
L_0 = l_0 = d, \quad N = N(\varepsilon)=  \bigg \lceil c_4(\varepsilon)  \vee  \frac{2^5(2+ \varepsilon)6^2}{(c_5 \varepsilon^3)^2} \bigg \rceil  
\end{equation}
(recall the statement of Theorem \ref{T:LOCAL_CB} for the definition of $c_4(\varepsilon)$, $c_5$), so that the constraint $l_0 \geq20( \sqrt{d} + N(\varepsilon))$ in \eqref{L_n} holds for all $d \geq c(\varepsilon)$, and choose $k_0$ appearing in \eqref{alpha_n} as
\begin{equation} \label{T:UB_parameters2}
k_0= b+ \log(2(c_2 l_0)^{2(d-1)})\stackrel{\eqref{T:UB_parameters}}{=} b+ \log(2(c_2 d)^{2(d-1)})
\end{equation}
(see \eqref{Lambda_n,x} for the definition of $c_2$). Lastly, we define the increasing sequence $(h_n)_{n\geq 0}$ recursively as 
\begin{equation} \label{T:UB_h}
\begin{split}
& h_0 = h_{\text{as}}(1 + \varepsilon/2) \ (= h_{\text{as}}^{(\varepsilon/2)})
, \\
& h_{n+1} = h_n +  \alpha_n \big(c_3 (\sqrt{d}+N)\big)^{d-2} \big(2 l_0^{-(d-2)} \big)^{n+1},  \text{ for all $n \geq 0$},
\end{split}
\end{equation}
with $\alpha_n$ given by \eqref{alpha_n}. In particular, $(h_n)_{n\geq0}$ satisfies the ``sprinkling'' condition \eqref{h_n_cond}. To see that the choices in \eqref{T:UB_parameters}, \eqref{T:UB_parameters2} and \eqref{T:UB_h} are judicious, we first check that
\begin{equation} \label{T:UB_h_infty}
h_\infty = \lim_{n \to \infty} h_n \leq h_{\text{as}}(1 + \varepsilon), \text{ for all $d \geq c(\varepsilon)$}.
\end{equation}
Indeed, it follows from \eqref{alpha_n}, using \eqref{M}, \eqref{T:UB_parameters} and \eqref{T:UB_parameters2}, that for all $d \geq c(\varepsilon)$ and $n\geq 0$,
\begin{equation} \label{T:UB_alpha_n}
\begin{split}
\alpha_n &  \, = \,  \big( \log(2^n((N+1)3d)^d) \big)^{1/2} +  2^{(n+1)/2} \big(n^{1/2} + \big( b+ \log(2(c_2 d)^{2(d-1)})\big)^{1/2} \big) \\
& \, \leq \, c'(\varepsilon) (d\log d)^{1/2} \,  2^{n+1}
\end{split}
\end{equation}
(the dependence on $\varepsilon$ is due to $N$). Hence,
\begin{equation} \label{EQ:h_infty_bound}
\begin{array}{rcl}
h_\infty & \hspace{-1ex} \stackrel{\eqref{T:UB_h_infty}, \eqref{T:UB_h}}{=} & \hspace{-1ex}  \displaystyle h_0 + \sum_{n\geq 0}  \alpha_n \big(c_3 (\sqrt{d}+N)\big)^{d-2} \big(2 l_0^{-(d-2)} \big)^{n+1} \\
& \hspace{-1ex}  \stackrel{ \eqref{T:UB_parameters}, \eqref{T:UB_alpha_n}}{\leq} & \hspace{-1ex}  \displaystyle h_0 + c(\varepsilon) \, (d\log d)^{1/2} \cdot  (c'(\varepsilon) d^{1/2})^{-(d-2)} \sum_{n\geq 0} \big(4d^{-(d-2)} \big)^{n} \\
& \hspace{-1ex}  \leq &\hspace{-1ex}  h_0 + \varepsilon/2,  
\end{array}
\end{equation}
for all $d \geq c''(\varepsilon)$. This is more than enough to deduce \eqref{T:UB_h_infty} (we comment on this in Remark \ref{R:UB_FINAL} below).

In order to complete the proof of Theorem \ref{T:ASYMPTOTICS_UB}, we apply Proposition \ref{P:DEC_INEQ_PROPAGATED}. Except for the condition \eqref{p_0_cond}, all its requirements  are clearly satisfied by our choices of parameters in \eqref{T:UB_parameters}, \eqref{T:UB_parameters2} and \eqref{T:UB_h}, whenever $d \geq c(\varepsilon)$. To deduce the necessary seed estimate \eqref{p_0_cond}, we use the connectivity bound from Theorem \ref{T:LOCAL_CB}. Observing that the choice of $N=N(\varepsilon)$ in \eqref{T:UB_parameters} guarantees that $c_5 \, f(\varepsilon/2, N) \geq 6$, for $f$ and $c_5$ as appearing in \eqref{T:LOCAL_CB1}, it follows that
\begin{equation*} 
\begin{array}{rcl}
p_0(h_0) & \stackrel{\eqref{p_n}, \, \eqref{eventA}}{=} & \mathbb{P} [ B_\infty(0,L_0)  \stackrel{\geq h_0}{\longleftrightarrow} \partial_{\text{int}} B_{0,0}] \\
& \stackrel{\eqref{EQ:B_n,x}, \, \eqref{T:UB_h}}{\leq}& 2d (2L_0+1)^{d-1} \cdot \mathbb{P} [ 0 \stackrel{\geq h_{\text{as}}^{(\varepsilon/2)}}{\longleftrightarrow} S_1(0, N L_0)] \\
& \stackrel{\eqref{T:LOCAL_CB1}, \, \eqref{T:UB_parameters}}{\leq} & e^{d\log3d - 6 d\log d} \leq d^{-4d},
\end{array}
\end{equation*}
for all $d \geq c'(\varepsilon)$. 
On the other hand, \eqref{T:UB_parameters2} impies that $e^{-k_0} \geq d^{-3d}$ for all $d \geq c$, hence the condition $p_0(h_0) \leq e^{-k_0}$ in \eqref{p_0_cond} holds for all$d \geq c'(\varepsilon)$. We may thus apply Proposition \nolinebreak \ref{P:DEC_INEQ_PROPAGATED}, thereby obtaining
\begin{equation} \label{p_n_bound_final}
p_n(h_n) \stackrel{\eqref{p_n_bounds}}{\leq} e^{-(k_0-b)2^n} \stackrel{\eqref{T:UB_parameters2}}{\leq} \big(2(c_2 d)^{2(d-1)}\big)^{-2^n}, \text{for all $n \geq 0$ and $d \geq c(\varepsilon)$},
\end{equation}
with $(h_n)_{n \geq 0}$ as defined in \eqref{T:UB_h}. Hence, for all $n \geq 1$ and $d \geq c(\varepsilon)$, (N.B.: the first of the following chain of inequalities is obtained by covering $B_\infty(0,2L_n)$ with essentially disjoint $\ell^\infty$-boxes of radius $L_n$, and performing a union bound)
\begin{equation*}
\begin{split}
&\mathbb{P}[B_\infty(0,2L_n)  \stackrel{\geq h_{\text{as}}^{(\varepsilon)}}{\longleftrightarrow} S_\infty(0,4L_n)] \leq 2^d \, \mathbb{P}[B_\infty(0,L_n)  \stackrel{\geq h_{\text{as}}^{(\varepsilon)}}{\longleftrightarrow} S_\infty(0,3L_n)] \\
&\begin{array}{rcl}
\qquad & \stackrel{\eqref{eventA}, \eqref{EQ:B_n,x}}{=} & \hspace{-1ex} 2^d \, \mathbb{P}[A_{n,0}^{h_{\text{as}}^{(\varepsilon)}}] \stackrel{\eqref{T:UB_h_infty}}{\leq}  2^d \, \mathbb{P}[A_{n,0}^{h_n}] \stackrel{\eqref{cascading_events}, \eqref{p_n}}{\leq}  2^d |\Lambda_{n,0}| \cdot p_n(h_n) \\
\qquad & \stackrel{\eqref{Lambda_n,x},\eqref{p_n_bound_final}}{\leq} & \hspace{-1ex}  2^d  \big((c_2 d)^{2(d-1)}\big)^{2^n} \cdot \big((2 c_2 d)^{2(d-1)}\big)^{-2^n} \leq 2^{-2(d-1)(2^n-1)} \xrightarrow{n \to \infty} 0.
\end{array}
\end{split}
\end{equation*}
In particular, this implies that $\liminf_{L \to \infty} \mathbb{P}[B_\infty(0,L)  \stackrel{\geq h_{\text{as}}^{(\varepsilon)}}{\longleftrightarrow} S_\infty(0,2L)]=0$, for all $d \geq c(\varepsilon)$. Recalling the definition \eqref{h_**} of the critical level $h_{**}$, this yields \eqref{T:UB_GOAL}, and thus completes the proof of Theorem \ref{T:ASYMPTOTICS_UB}. \hfill $\square$
\end{proof2}

\begin{remark} \label{R:UB_FINAL}
Our renormalization scheme is somewhat asymmetrical. On the one hand, the sprinkling condition \eqref{h_n_cond} for the sequence $(h_n)_{n\geq 0}$ turns out to be very mild, as it only costs an ``additive'' $\varepsilon$, cf. \eqref{EQ:h_infty_bound} (obtaining $h_\infty \leq h_0(1+ \varepsilon)$, for all $d\geq c(\varepsilon)$ would have sufficed for the purpose of proving Theorem \ref{T:ASYMPTOTICS_UB}). On the other hand, the scheme relies on the strong seed estimate \eqref{p_0_cond}, and establishing it is what prevents us from obtaining a more precise result than \eqref{EQ:ASYMPTOTICS_UB}. \hfill $\square$
\end{remark}

\section{Lower bound} \label{S:LOWER_BOUND}

We proceed to show the lower bound, Theorem \ref{T:ASYMPTOTICS_LB}. As described in the Introduction, the proof comprises two main steps. The first one, which is the subject of Subsection  \ref{S:FINITE_SIZE_CRITERION}, reduces the problem of constructing an infinite cluster to a local statement. The assertion is roughly the following (see Theorem \ref{T:LOCAL_GLOBAL} below): given $\delta > 0$, if the level $h$ is such that with high probability, the set $E_\varphi^{\geq h + \delta}$ possesses a ubiquitous component in each of the $d$-dimensional hypercubes $2x + \{0,1 \}^d$, for $x\in \mathbb{Z}^2$ ($\subset \mathbb{Z}^d$) and $|x|_1 \leq 1$, which are all connected, then $E_\varphi^{\geq h}$ percolates whenever $d$ is sufficiently large (depending on $\delta$). 

In order to prove the lower bound \eqref{EQ:ASYMPTOTICS_LB}, it then suffices to verify that this criterion holds when $h = h_{\text{as}}(1 - \varepsilon)$, for arbitrary $\varepsilon > 0$. This step is split again into two parts, to which Subsections \ref{S:SUBST_COMP} and \ref{S:CONNECTING_SUBST_COMPS} are respectively devoted. First, we construct a wealth of substantial components in the hypercube, with cardinality growing polynomially in $d$, and show that most vertices in $\{0,1 \}^d$ are either contained in or neighboring such a component. The main result is entailed in Corollary \ref{C:number_subst_comps} below. The second part consists of patching together these substantial components to form a ``giant'' one, and to then connect the latter to the ubiquitous components contained in the neighboring translates of $\{ 0,1 \}^{\mathbb{Z}^d}$. This is achieved in Theorem \nolinebreak \ref{T:giant_comp}. All ingredients are put together at the end of Subsection \ref{S:CONNECTING_SUBST_COMPS} to complete the proof of \eqref{EQ:ASYMPTOTICS_LB}.

\subsection{Local ubiquity and connectivity are sufficient} \label{S:FINITE_SIZE_CRITERION}

We begin by establishing the finite-size criterion that guarantees percolation, Theorem \nolinebreak \ref{T:LOCAL_GLOBAL} below. To cope with the long-range interactions, we use a renormalization argument on $\mathbb{Z}^2$, which bears some resemblance to the one developed in Section \ref{S:UB_RS} above, together with a standard duality argument.

We view $\mathbb{Z}^2$ as a subset of $\mathbb{Z}^d$ by identifying $x=(x_1,x_2) \in \mathbb{Z}^2$ with $(x_1,x_2,0,\dots, 0) \in \mathbb{Z}^d$, and write $B^{(2)}_p(x,r) = B_p(x,r) \cap \mathbb{Z}^2$, with $x \in \mathbb{Z}^2$, $r \geq 0$, $p=1,2,\infty$, for two-dimensional balls. Similarly, we let $S^{(2)}_p(x,r) = \partial_{\text{int}} B^{(2)}_p(x,r)$. Moreover, we denote the $d$-dimensional hypercube and its translates in $\mathbb{Z}^2$ by
\begin{equation}\label{D:hypercube}
\mathbb{H}_x = 2x + \{0,1 \}^d, \text{ for all $x \in \mathbb{Z}^2$,}  
\end{equation}
and abbreviate $\mathbb{H} \stackrel{\text{def.}}{=} \mathbb{H}_0$, so that $\mathbb{H}_x$, $x \in \mathbb{Z}^2$, forms a partition of $\mathbb{H} + \mathbb{Z}^2$. The following definition is essentially borrowed from \cite{SUB}, Section 2 (this will in particular enable us to reinject certain isoperimetric controls which follow from the results in \cite{BL} (see Lemma 4.3 in \cite{SUB}) directly at a later stage). For arbitrary $h \in \mathbb{R}$ and $x \in \mathbb{Z}^2$, we introduce the (local) event
\begin{align} \label{EQ:goodEventDef}
\begin{split}
 {G}_x^h
= \big\{ &\omega \in \Omega ; 
\text{ for all $x' \in B^{(2)}_1(x,1)$, the set $\mathbb{H}_{x'} \cap E_\varphi^{\geq h}(\omega)$ contains a connected}\\
&\text{component $C_{x'}$ with the property that $\vert \overline{C}_{x'}^{\mathbb{H}_{x'}} \vert \geq (1-d^{-2})\vert \mathbb{H}_{x'}\vert$, and the}\\
&\text{sets $C_{x'}$, $x' \in B^{(2)}_1(x,1)$, are connected within $ E_{\varphi}^{\geq h} \cap \textstyle \bigcup_{x' \in B^{(2)}_1(x,1)}  \mathbb{H}_{x'}$} \big\}
\end{split}
\end{align}
(recall that $\overline{C}_{x'}^{\mathbb{H}_{x'}}$ denotes the $\ell^1$-closure of $C_{x'}$ in $\mathbb{H}_{x'}$). A vertex $x \in \mathbb{Z}^2$ will be called $h$-good if $ G_x^h$ occurs, for arbitrary $h \in \mathbb{R}$, and $h$-bad otherwise. Note that for all $d \geq 3$, each of the sets $C_{x'}$ in \eqref{EQ:goodEventDef} is necessarily unique in $\mathbb{H}_{x'}$: indeed, if $C \subset \mathbb{H}$ satisfies $\vert\overline{C}^{\mathbb{H}} \vert \geq (1-d^{-2})\vert \mathbb{H}\vert$, then in fact $|C|\geq |\overline{C}^{\mathbb{H}}| - |\partial C \cap \mathbb{H}| \geq (1-d^{-2})\vert \mathbb{H}\vert - d|C|$ (each vertex has $d$ neighbors in $\mathbb{H}$), and solving for $|C|$ yields $|C|\geq (d-1) d^{-2}\vert \mathbb{H}\vert \geq 2 d^{-2}\vert \mathbb{H}\vert$. But any other set $D\subset \mathbb{H}$ not connected to $C$ must be contained in $\mathbb{H}\setminus \overline{C}^{\mathbb{H}}$ and thus satisfy $|D|\leq d^{-2}|\mathbb{H}|$, hence $C$ is unique.
Accordingly, we will henceforth refer to any set $C_{x'}$ appearing in \eqref{EQ:goodEventDef} as \textit{the} giant component of $\mathbb{H}_{x'} \cap E_\varphi^{\geq h}$. It is then plain from \eqref{EQ:goodEventDef} that for all $d \geq 3$ and $h \in \mathbb{R}$,
\begin{equation}\label{EQ:good_sites}
\{ 0 \leftrightarrow \infty \text{ in } \{y \in \mathbb{Z}^2; \; y \text{ is $h$-good} \} \} \subseteq \{ 0 \leftrightarrow \infty \text{ in  } E_\varphi^{\geq h} \cap (\mathbb{H}+ \mathbb{Z}^2) \},
\end{equation}
i.e. percolation of $h$-good sites in $\mathbb{Z}^2$ implies percolation of $E_\varphi^{\geq h}$ (in $\mathbb{H}+ \mathbb{Z}^2$). To see this, observe that if two neighboring vertices $x,x+e \in \mathbb{Z}^2$ (with $e$ a unit vector in $\mathbb{Z}^2$) are both $h$-good, then the corresponding giant clusters $C_x \subset \mathbb{H}_x \cap E_\varphi^{\geq h}$ and $C_{x+2e} \subset \mathbb{H}_{x+2e} \cap E_\varphi^{\geq h}$ (cf. \eqref{D:hypercube}) are connected to the \textit{same} (by uniqueness) giant cluster in $\mathbb{H}_{x+e} \cap E_\varphi^{\geq h}$, thus $C_x$ and $C_{x+2e}$ belong to the same cluster of $E_\varphi^{\geq h}$. It then follows inductively that an infinite nearest-neighbor path of $h$-good vertices in $\mathbb{Z}^2$ implies the existence of an infinite cluster in $E_\varphi^{\geq h} \cap (\mathbb{H}+ \mathbb{Z}^2)$.

We are now ready to proceed to the main result of this section, which has a similar flavor as Theorem \nolinebreak 2.2 of \cite{SLB}.

\begin{theorem} \label{T:LOCAL_GLOBAL} $(\varepsilon > 0 , \, a = 1/10)$

\medskip
\noindent Given any sequence $(h(d))_{d \ge 3}$ such that
\begin{equation} \label{EQ:local_global}
\limsup_{d \to \infty} d^{2 + 3a}  \, \mathbb{P} [( G_0^{h(d) + \varepsilon} )^c ] = 0,
\end{equation}
one has
\begin{equation} \label{EQ:local_global2}
h_*(d) \ge h(d), \text{ for all $d\geq c(\varepsilon)$.}
\end{equation}
\end{theorem}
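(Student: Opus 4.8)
The plan is to show that, under the hypothesis \eqref{EQ:local_global}, the process of $h'$-good vertices on $\mathbb{Z}^2$ percolates for $h' = h(d)+\varepsilon$ and all large $d$, since by \eqref{EQ:good_sites} this immediately forces $E_\varphi^{\ge h'}$ to percolate in $\mathbb{H}+\mathbb{Z}^2$, hence $\eta(h')>0$ and $h_*(d)\ge h' \ge h(d)$. (The extra $\varepsilon$ of sprinkling in \eqref{EQ:local_global} is a convenience matching the ``additive'' spirit of the paper; one could even work directly at $h(d)$.) The obstruction, as stressed before the statement, is that $\{x \text{ is } h'\text{-good}\}$ is a long-range dependent field on $\mathbb{Z}^2$, so one cannot invoke Liggett--Schonmann--Stacey-type stochastic domination. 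Instead I would run a two-dimensional static renormalization in the same spirit as Subsection~\ref{S:UB_RS}: on dyadic scales $L_n=l_0^n$ in $\mathbb{Z}^2$ declare a box at level $n$ ``bad'' if it contains two well-separated bad sub-boxes at level $n-1$, controlling the recursion by a decoupling inequality for the free field (the events ``a given level-$n$ box contains a bad level-$0$ site'' are measurable with respect to $\varphi$ restricted to an $\ell^1$-neighbourhood of that box of width $O(L_n)$, and the Green-function bounds \eqref{g1}, \eqref{g2} together with the conditional-law decomposition of Lemma~\ref{L:conds_exps} give the required factorization up to a small sprinkling, exactly as in Proposition~\ref{P:DEC_INEQ}). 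The seed estimate for this scheme is precisely \eqref{EQ:local_global}: the probability that a fixed site is $h'$-bad is $o(d^{-(2+3a)})$, which with $a=1/10$ beats the combinatorial complexity (on $\mathbb{Z}^2$ the number of trees at level $n$ is $(cl_0)^{c\cdot 2^n}$, and $l_0$ only needs to grow polynomially in $d$) once $d$ is large.

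In more detail, the key steps in order are: (i) fix $l_0 = l_0(d)$ polynomial in $d$ (e.g.\ $l_0 = d^{C}$ for a suitable numerical $C$ tied to $a$) and set up renormalized lattices $L_n\mathbb{Z}^2$, with crossing/occupancy events $D_{n,x}$ = ``the level-$n$ box at $x$ contains an $h'$-bad site''; show the cascading inclusion $D_{n,x}\subseteq\bigcup_{\mathcal T} D_{n-1,\mathcal T(1)}\cap D_{n-1,\mathcal T(2)}$ over embeddings of a binary tree, with the two sub-boxes at mutual $\ell^\infty$-distance $\gtrsim L_n$; (ii) prove the one-step decoupling $q_{n+1}\le q_n^2 + (\text{Gaussian error})$ for $q_n = \sup_x \mathbb{P}[D^{\mathcal T}_{n,x}]$ using \eqref{phi_cond_exps}, the flipping identity \eqref{di7}, FKG, and the fact that the shift $\mu^{K_1}_x$ for $x$ in the far sub-box is exponentially small by \eqref{g1}--\eqref{g2} since the relevant sets are $\ell^2$-separated by $\gg d$; (iii) check that $\mathbb{P}[D_{0,0}] \le |B_\infty^{(2)}(0,L_0)\cap\mathbb{Z}^2|\cdot\mathbb{P}[(G^{h'}_y)^c] = O(l_0^2)\cdot o(d^{-(2+3a)})$, which by the choice of $l_0$ is smaller than the inverse combinatorial complexity, so the recursion drives $q_n\to 0$ doubly-exponentially (this is where the margin $3a$ in the exponent is consumed and why $a=1/10$ must be small but positive); (iv) conclude by the standard planar-duality argument that, with $q_n\to0$, almost surely no infinite ``bad'' $*$-cluster surrounds the origin in $\mathbb{Z}^2$, hence an infinite nearest-neighbour path of $h'$-good sites exists, giving percolation of $E_\varphi^{\ge h'}$ via \eqref{EQ:good_sites}.

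I expect the main obstacle to be step (ii): making the decoupling work \emph{uniformly in $d$} while allowing $l_0$ to grow with $d$. One must ensure that the $\ell^2$-distance between the two sub-boxes at level $n$ (which lives in the two-dimensional slice $\mathbb{Z}^2\subset\mathbb{Z}^d$, so is only of order $L_n$, not $\sqrt d\,L_n$) still exceeds $d$ — this is automatic once $L_0 = l_0 \ge d$ — and that the capacity/Green-function product in the analogue of \eqref{P2.2.4} is summable against the growth of $|K_1|\le 2^n l_0^2$; the bound \eqref{g1} with exponent $d-2$ makes each factor $(c\sqrt d/L_{n+1})^{d-2}$, which crushes everything for $d$ large and $L_{n+1}\ge d$. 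A secondary subtlety, already flagged in the paper's discussion of $h_{**}$, is that the event $G^{h'}_x$ is neither increasing nor decreasing (it asks for a giant component whose \emph{complement} is small), so the renormalization must be phrased in terms of its complement $(G^{h'}_x)^c$ and one applies FKG to the increasing pieces after using \eqref{di7} to trade a decreasing part for an increasing one at the flipped level — the sprinkling $h' = h(d)+\varepsilon$ versus $h(d)$ gives exactly the room to absorb the Gaussian error terms accumulated over all scales, just as in \eqref{EQ:h_infty_bound}.
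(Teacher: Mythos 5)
Your overall strategy coincides with the paper's: reduce \eqref{EQ:local_global2} to percolation of $h(d)$-good sites via \eqref{EQ:good_sites}, run a two-dimensional decoupling renormalization on $\mathbb{Z}^2$ with a base scale $L_0$ of order $d$, and close via planar duality. However, there is a concrete gap in how you set up the renormalization events. You define $D_{n,x}$ as the occupancy event ``the level-$n$ box at $x$ contains an $h'$-bad site,'' and then assert the cascading inclusion $D_{n,x}\subseteq\bigcup_{\mathcal T} D_{n-1,\mathcal T(1)}\cap D_{n-1,\mathcal T(2)}$ over embeddings with two well-separated children. That inclusion is simply false for occupancy events: a single bad site placed anywhere in the level-$n$ box only witnesses $D_{n-1,y}$ for \emph{one} sub-box, and nothing forces a second, far-away bad sub-box. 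The same failure invalidates the duality step at the end: $q_n\to 0$ for occupancy events tells you nothing about the existence (or absence) of $*$-circuits of bad sites surrounding the origin, because circuits are crossings, not mere occupancy. To make both steps work one must take, as the paper does in \eqref{EQ:local_global_events}, the \emph{crossing} event $D_{n,x}^h = \{B^{(2)}_\infty(x,L_n) \text{ is connected to } S^{(2)}_\infty(x,3L_n) \text{ by a }*\text{-path of }h\text{-bad vertices}\}$; only then does a bad $*$-path across scale $n$ produce two well-separated bad $*$-crossings at scale $n-1$, and only then does $q_n\to 0$ close the duality argument.

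Two minor further points. First, your parenthetical claim that $G_x^{h'}$ is ``neither increasing nor decreasing'' is incorrect: adding sites to the level set can only grow or merge giant components and can only preserve connections between them, so $G_x^h$ is increasing in $\varphi$ (equivalently, $(G_x^h)^c$ and hence $D_{n,x}^h$ are decreasing), which is exactly what makes the flipping trick \eqref{di7} and FKG applicable. Second, you propose purely geometric scales $L_n=l_0^n$ with $l_0$ a fixed power of $d$, whereas the paper takes super-geometric scales $L_{n+1}=l_n L_n$, $l_n\approx L_n^a$. Your choice is likely viable — the sprinkling budget $\sum_n \delta_n$ with $\delta_n \sim L_n^2 2^d L_{n+1}^{-(d/2-2)}$ is still summable once $L_0\ge d$ — but you would need to pin $L_0 = \Theta(d)$ (not an arbitrary power of $d$) so that the seed estimate $L_0^{2+O(a)}\mathbb{P}[(G_0^{h(d)+\varepsilon})^c]\to 0$ is exactly what hypothesis \eqref{EQ:local_global} provides; a generic $l_0=d^C$ would require a correspondingly stronger hypothesis.
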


\begin{proof}
By \eqref{EQ:good_sites} and the definition of $h_*$ in \eqref{h_*}, in order to prove \eqref{EQ:local_global2}, it suffices to show that
\begin{equation}  \label{EQ:local_global3}
\mathbb{P}[0 \leftrightarrow \infty \text{ in } \{y \in \mathbb{Z}^2; \; y \text{ is $h(d)$-good} \}] > 0, \text{ for all $d\geq c(\varepsilon)$,}
\end{equation}
for this implies $\mathbb{P}[0 \stackrel{\geq h(d)}{\longleftrightarrow} \infty]>0$ and hence $h(d) \leq h_*(d)$, for all $d\geq c(\varepsilon)$. The proof of \eqref{EQ:local_global3} involves a renormalization scheme, which we now describe. We introduce the integer parameter
\begin{equation} \label{EQ:local_global_scales1}
 L_0 \geq 2,
\end{equation}
and define an increasing sequence of length scales $(L_n)_{n \geq 0}$ recursively as
\begin{equation} \label{EQ:local_global_scales2}
L_{n+1} = l_n L_n, \text{ $l_n = 20 \lceil L_n^a \rceil$, for $n \geq 0$} 
\end{equation}
(this should not be confused with the geometric sequence of length scales appearing in Section \ref{S:UB_RS}), together with corresponding renormalized lattices
\begin{equation} \label{EQ:local_global_lattice}
\mathbb{L}_n^{(2)} = L_n \mathbb{Z}^2 \ (\subset \mathbb{Z}^d), \text{ for $n \geq 0$}.
\end{equation}
We also introduce, for arbitrary $n \geq 0$ and $x \in \mathbb{L}_n^{(2)}$, the (bad) events
\begin{equation} \label{EQ:local_global_events}
\begin{split}
D_{n,x}^h = \{
&\text{$B^{(2)}_\infty (x,L_n)$ is connected to $S^{(2)}_\infty (x,3L_n)$} \\
&\text{by a $*$-path of $h$-bad vertices (in $\mathbb{Z}^2$)}\}
\end{split}
\end{equation}
(recall that $y\in \mathbb{Z}^2$ is called $h$-bad if $( G_y^h )^c$ occurs). By \eqref{EQ:goodEventDef}, the event $D_{n,x}^h$ is decreasing (in $\varphi$). Moreover, by translation invariance, the function
\begin{equation}\label{EQ:local_global_q}
q_n(h) = \mathbb{P}[D_{n,x}^h], \text{ for $n \geq 0$, $x\in \mathbb{L}^{(2)}_n$, $h \in \mathbb{R}$},
\end{equation}
is well-defined (i.e. independent of $x$) and non-decreasing in $h$, for every $n \geq 0$. The key to establishing \eqref{EQ:local_global3} will be to show that, if $L_0$ is a suitable (increasing) function of $d$, and \eqref{EQ:local_global} holds for some $\varepsilon > 0$, the probability $q_n(h(d))$ decays sufficiently rapidly to $0$ as $n \to \infty$, for all $d\geq c(\varepsilon)$, see \eqref{EQ:local_global_q_bound_final} below. Together with a straightforward (planar) duality argument, which will be detailed below \eqref{EQ:local_global_q_bound_final}, this will then yield \eqref{EQ:local_global3}.

To obtain good estimates for $q_n(h(d))$, $n \geq 0$, we first develop ``recursive bounds'' relating the functions $q_{n+1}(\cdot)$ and $q_{n}(\cdot)$, for arbitrary $n \geq 0$ (similar in spirit to what was done in Proposition \ref{P:DEC_INEQ} above, but simpler). To this end, we let $n \geq 0$ and $x \in \mathbb{L}^{(2)}_{n+1}$ be fixed, and introduce the sets
\begin{equation}\label{EQ:local_global_K_i}
\begin{split}
&S_i = \mathbb{L}^{(2)}_n \cap S_\infty^{(2)}(x, i \, L_{n+1}), \text{ for $i=1,2$}.
\end{split}
\end{equation}
By geometric considerations similar to those leading to \eqref{cascading_events}, see also Figure \ref{F:crossing_high_dim}, and on account of \eqref{EQ:local_global_scales2}, we deduce that
\begin{equation} \label{EQ:local_global_est1}
q_{n+1}(h) \leq \mathbb{P} \Big[ \bigcup_{\substack{x_i \in S_i \\ i=1,2}} D_{n,x_1}^h \cap D_{n,x_2}^h \Big] \leq c_0' l_n^2 \sup_{\substack{x_i \in S_i \\ i=1,2}} \mathbb{P}[D_{n,x_1}^h \cap D_{n,x_2}^h],
\end{equation}
for a suitable constant $c_0' \geq 1$ and all $h \in \mathbb{R}$. We consider the probability on the right-hand side of \eqref{EQ:local_global_est1}, for fixed $x_i \in S_i$, $i=1,2$ and $h \in \mathbb{R}$. Abbreviating 
\begin{equation} \label{EQ:local_global_K_i}
K_i = \bigcup_{ y \in B_\infty^{(2)}(x_i,3L_n)}  \bigcup_{z \in B_1^{(2)}(y,1)}\mathbb{H}_z
\end{equation}
and observing that $D_{n,x_i}^h \in \sigma(\varphi_y ; \, y \in K_i)$, for $i=1,2$, due to \eqref{EQ:goodEventDef} and \eqref{EQ:local_global_events}, we write
\begin{equation} \label{EQ:local_global_est2}
\begin{split}
\mathbb{P}[D_{n,x_1}^h \cap D_{n,x_2}^h] \leq 
&\ \mathbb{P}[D_{n,x_1}^h, \, \inf_{y \in K_1} \varphi_y \geq - \beta_n , \, \mathbb{P}[D_{n,x_2}^h \, | \, (\varphi_y)_{y \in K_1}]] \\
&+ \mathbb{P} [ \inf_{y \in K_1} \varphi_y < - \beta_n]
\end{split}
\end{equation}
for some cut-off value $\beta_n > 0$ to be selected below. By \eqref{phi_cond_exps}, 
\begin{equation*}
\mathbb{P}[D_{n,x_2}^h \, | \, (\varphi_y)_{y \in K_1}]= \widetilde{\mathbb{P}}[(1\{ \widetilde{\varphi}_x^{K_1} + \mu_x^{K_1} \geq h \})_{x \in \mathbb{Z}^d} \in D_{n,x_2}], \text{ $\mathbb{P}$-a.s.} 
\end{equation*}
(recall the notation from \eqref{EQ:event_A_Ah}), and we focus on bounding the random shift $ \mu_x^{K_1}$, for $x \in K_2$. By construction, cf. \eqref{EQ:local_global_scales1}, \eqref{EQ:local_global_scales2} and \eqref{EQ:local_global_K_i},
\begin{equation*}
d_1(K_1,K_2) \geq d_\infty(K_1,K_2) \geq  L_{n+1} - 6L_n - 6 \geq c_1' L_{n+1},
\end{equation*}
for a suitable constant $c_1' > 0$. Hence, setting
\begin{equation} \label{EQ:local_global_L_0}
L_0 =  \lceil (c_0 / c_1') \, d \,  \rceil \quad \text{(see \eqref{c_0} for the definition of $c_0$)},
\end{equation}
we obtain, on the event $\{\inf_{y \in K_1} \varphi_y \geq - \beta_n \}$, for all $x \in K_2$ and $d \geq 5$,
\begin{equation} \label{EQ:local_global_delta_def}
\begin{split}
- \mu_x 
&\stackrel{\eqref{mu}}{\leq} \beta_n P_x[H_{K_1} < \infty] \stackrel{\eqref{1.13}}{\leq} \beta_n \, \text{cap}(K_1) \sup_{y \in K_1} g(y-x) \\
&\stackrel{\eqref{c_0}}{\leq} \beta_n |K_1| \Big(\frac{c_0 d}{ c_1' L_{n+1}}\Big)^{d/2-2} \stackrel{\eqref{EQ:local_global_L_0}}{\leq} c \beta_n L_n^2 2^d l_n^{-(d/2-2)} \stackrel{\text{def.}}{=} \frac{\delta_n}{2},
\end{split}
\end{equation}
where in the second line, we have used the crude estimate $\text{cap}(K_1) \leq |K_1|$ (see \eqref{1.10}, \eqref{1.11}), the bound $|K_1| \leq c L_n^2 2^d$, which follows immediately from \eqref{EQ:local_global_K_i}, and the fact that $L_{n+1} \geq l_n L_0  \geq  l_n c_0 d / c_1'$ for all $n \geq 0$, due to \eqref{EQ:local_global_scales2} and \eqref{EQ:local_global_L_0}. We henceforth tacitly assume that $ d \geq 5$. By \eqref{EQ:local_global_delta_def}, on $\{\inf_{y \in K_1} \varphi_y \geq - \beta_n \}$, the inequality $\widetilde{\varphi}_x^{K_1} + \mu_x^{K_1} \leq h$ implies that $\widetilde{\varphi}_x^{K_1} - \mu_x^{K_1} \leq h + \delta_n$, for arbitrary $x \in K_2$, and therefore
\begin{equation}\label{EQ:local_global_est3}
\begin{split}
\mathbb{P}[D_{n,x_2}^h \,  | \,  (\varphi_x)_{x \in K_1}] 
&\leq \widetilde{\mathbb{P}}[(1\{ \widetilde{\varphi}_x^{K_1} - \mu_x^{K_1} \geq h + \delta_n \})_{x \in \mathbb{Z}^d} \in D_{n,x_2}] \\
&= \widetilde{\mathbb{P}}[(1\{ - \widetilde{\varphi}_x^{K_1} - \mu_x^{K_1} \geq h + \delta_n \})_{x \in \mathbb{Z}^d} \in D_{n,x_2}]\\
&= \mathbb{P}[\overline{D}_{n,x_2}^{-(h + \delta_n)} \,  | \,  (\varphi_x)_{x \in K_1}],
\end{split}
\end{equation}
on the event $\{\inf_{y \in K_1} \varphi_y \geq - \beta_n \}$, where we have used that $D_{n,x_2}$ is decreasing in the first line, and the symmetry of $ \widetilde{\varphi}^{K_1}$ in the second line (recall also \eqref{di7}). Inserting \eqref{EQ:local_global_est3} into \eqref{EQ:local_global_est2}, applying the FKG-inequality and \eqref{di7}, we obtain
\begin{equation*}
\mathbb{P}[D_{n,x_1}^h \cap D_{n,x_2}^h] \leq \mathbb{P}[D_{n,x_1}^{h} ] \cdot \mathbb{P} [D_{n,x_2}^{h+ \delta_n}] +   \mathbb{P}[ \inf_{x \in K_1} \varphi_x < - \beta_n].
\end{equation*}
Finally, substituting this into \eqref{EQ:local_global_est1}, taking suprema over $x_i \in S_i$, for $i=1,2$, and using symmetry yields, in view of \eqref{EQ:local_global_q},
\begin{equation}\label{EQ:local_global_est4}
q_{n+1}(h) \leq c_0'l_n^2 \big( q_n(h + \delta_n)^2 + \epsilon_n \big), \text{ with } \epsilon_n=   \mathbb{P}[ \sup_{x \in K_1} \varphi_x > \beta_n],
\end{equation}
for all $n \geq 0$, $h \in \mathbb{R}$ and $\beta_n > 0$. This yields the desired recursive bounds. In order to propagate them inductively, we select 
\begin{equation}\label{EQ:local_global_beta_n}
\beta_n =  c_1 \big( \sqrt{\log (2c_0' l_n^2l_{n+1}^3)} + \sqrt{\log|K_1|}\big), \text{ for all $n \geq 0$},
\end{equation}
with $c_1$ as defined in \eqref{EQ:BTIS} and $c_0'$ in \eqref{EQ:local_global_est1}. The key estimate comes in the following result.
\begin{lemma} \label{L:LOCAL_GLOBAL_q} $(h \in \mathbb{R}, \, \varepsilon >0, \text{ and } L_0, \, (l_n)_{n\geq 0}, \, (\beta_n)_{n\geq 0} \text{ as in \eqref{EQ:local_global_L_0},  \eqref{EQ:local_global_scales2}, \eqref{EQ:local_global_beta_n}, respectively})$

\medskip
\noindent There exists a constant $c_2'(\varepsilon) \geq 3$ such that, if 
\begin{equation}\label{EQ:local_global_qbound_cond}
q_0(h + \varepsilon) \leq l_0^{-3}, \text{ for all $d\geq c_2'(\varepsilon)$}
\end{equation}
holds, then
\begin{equation} \label{EQ:local_global_qbound}
q_n(h) \leq l_n^{-3}, \text{ for all $d\geq c_2'(\varepsilon)$ and $n \geq 0$}.
\end{equation}
\end{lemma}
\begin{proof3}
By \eqref{EQ:local_global_scales1} and \eqref{EQ:local_global_scales2}, we have $l_n^2 l_{n+1}^3 \leq cl_n^2 L_{n+1}^{3a} = cl_n^{2+3a}L_n^{3a} \leq c'L_n^{a(5+3a)}$, for all $n \geq 0$. Substituting this into \eqref{EQ:local_global_beta_n} and using that $|K_1| \leq cL_n^2 2^d$, we obtain the bound $\beta_n \leq c((\log L_n)^{1/2} + d^{1/2} )$, for all $d \geq 3$ and $n \geq 0$. Inserting this into the definition of $\delta_n$, see \eqref{EQ:local_global_delta_def}, yields
\begin{equation*}
\begin{split}
\sum_{n \geq 0} \delta_n 
&\leq c 2^d \sum_{n\geq 0} L_n^2 l_n^{-(d/2-2)} ((\log L_n)^{1/2} + d^{1/2} ) \\
&\leq c' 3^d \sum_{n\geq 0} L_n^{-a(d/2 -2 - 3/a)} \\
&\leq c' \frac{3^d}{L_0^{c''d}} \sum_{n\geq 0} (L_0^{-ac'' d})^n
\end{split}
\end{equation*}
for all $d \geq c$, where we have used $L_n \stackrel{\eqref{EQ:local_global_scales2}}{\geq} L_0^{(1+a)^n} \geq L_0^{1+ na}$ in the last line. Due to the choice of $L_0$ in \eqref{EQ:local_global_L_0}, given $\varepsilon >0,$ it follows that
\begin{equation}\label{EQ:local_global_delta_bound}
\sum_{n \geq 0} \delta_n \leq \varepsilon,
\end{equation}
for all $d\geq c(\varepsilon)$. Moreover (for reasons that will become clear shortly) we observe that
\begin{equation} \label{EQ:local_global_c_2_choose}
 2c_0' l_n^{-4}l_{n+1}^3 \leq c' l_n^{-4} (l_n L_n)^{3a} \leq c'' L_n^{-a + 3a^2} \leq 1, \text{ for all $n \geq 0$},
\end{equation}
whenever $ d\geq c$, due to the choice of $L_0$ in \eqref{EQ:local_global_L_0} and $a$ in Theorem \ref{T:LOCAL_GLOBAL} (this is why $a$ should not be chosen too large). Given $\varepsilon > 0$, we define $c_2'(\varepsilon)$ appearing in the statement of Lemma \ref{L:LOCAL_GLOBAL_q} in a way that \eqref{EQ:local_global_delta_bound} and \eqref{EQ:local_global_c_2_choose} simultaneously hold whenever $d \geq c_2'(\varepsilon)$.

We now prove \eqref{EQ:local_global_qbound} by induction over $n$. Let $h \in \mathbb{R}$, $\varepsilon > 0$ and $d \geq c_2'(\varepsilon)$ be fixed. In view of \eqref{EQ:local_global_delta_bound}, and because the function $q_{n}(\cdot)$ is non-decreasing for all $n \geq 0$, it suffices to show that
\begin{equation}\label{EQ:local_global_q_est1}
q_{n}\Big(h + \varepsilon - \sum_{i=0}^{n-1} \delta_i \Big) \leq l_{n}^{-3}, \text{ for all $n \geq0$}
\end{equation}
(with the convention that the sum equals $0$ when $n=0$). By assumption, cf. \eqref{EQ:local_global_qbound_cond}, we have that \eqref{EQ:local_global_q_est1} holds for $n=0$. Assume now it holds for some $n \geq 0$. By \eqref{EQ:local_global_est4}, we have
\begin{equation}\label{EQ:local_global_q_est2}
q_{n+1}\Big(h + \varepsilon - \sum_{i=0}^{n} \delta_i \Big) \leq c_0' l_n^2 \Big(q_n \Big(h + \varepsilon - \sum_{i=0}^{n-1} \delta_i \Big)^2 + \epsilon_n \Big).
\end{equation}
We bound each of the two terms appearing on the right-hand side separately.  By the BTIS-inequality \eqref{EQ:BTIS} and the choice of $\beta_n$ in \eqref{EQ:local_global_beta_n}, 
\begin{equation} \label{EQ:local_global_q_est3}
c_0' l_n^2 \epsilon_n \leq (2l_{n+1}^3)^{-1}.
\end{equation}
Moreover,
\begin{equation}\label{EQ:local_global_q_est4}
c_0' l_n^2 q_n \Big(h + \varepsilon - \sum_{i=0}^{n-1} \delta_i \Big)^2 \stackrel{\substack{\text{induction} \\ \text{hypothesis}}}{ \leq} c_0' l_n^{-4} \stackrel{\eqref{EQ:local_global_c_2_choose}}{\leq} (2l_{n+1}^3)^{-1}.
\end{equation}
Substituting \eqref{EQ:local_global_q_est3} and \eqref{EQ:local_global_q_est4} into \eqref{EQ:local_global_q_est2} yields $q_{n+1}(h + \varepsilon - \sum_{i=0}^{n} \delta_i) \leq l_{n+1}^{-3}$, as desired. This completes the proof of \eqref{EQ:local_global_q_est1}, and thus of Lemma \ref{L:LOCAL_GLOBAL_q}. \hfill $\square$
\end{proof3}

\bigskip
We now complete the proof of Theorem \ref{T:LOCAL_GLOBAL}. Let $\varepsilon > 0$ and $(h(d))_{d\geq 3}$ be a sequence of levels such that \eqref{EQ:local_global} holds. The latter implies that
\begin{equation*}
\begin{split}
l_0^3 \, q_0(h(d)+ \varepsilon) 
&\stackrel{\eqref{EQ:local_global_q}}{=} l_0^3 \, \mathbb{P} [D_{0,0}^{h(d)+ \varepsilon}] \\
&\stackrel{\eqref{EQ:local_global_events}}{\leq} l_0^3 \, \mathbb{P} \Big[ \bigcup_{y \in S_\infty^{(2)}  (0,L_0)} \{ \text{$y$ is $(h(d)+ \varepsilon)$-bad} \}\Big] \\
&\stackrel{\eqref{EQ:local_global_scales2}}{\leq} c L_0^{2+3a} \mathbb{P}\big[\big(G_0^{h(d)+ \varepsilon}\big)^c \big] \leq 1,
\end{split}
\end{equation*}
for all $d \geq c(\varepsilon)$, where the last step follows from the choice of $L_0$ in \eqref{EQ:local_global_L_0} and the assumption \eqref{EQ:local_global}. By Lemma \ref{L:LOCAL_GLOBAL_q}, we thus obtain
\begin{equation}\label{EQ:local_global_q_bound_final}
q_n(h(d)) \leq l_n^{-3}, \text{ for all $n \geq 0$ and $d \geq c(\varepsilon)$}.
\end{equation}
With this estimate at hand, we now prove the assertion \eqref{EQ:local_global3} by a standard planar duality argument. Let us call a $*$-\textit{circuit around $0$} any closed $*$-path $\gamma$ in $\mathbb{Z}^2$ such that the origin is contained in a finite connected component of $\mathbb{Z}^d \setminus \text{range}(\gamma)$. Denoting by $e_1$ the unit vector in the first coordinate, we have, recalling the definition of the events $D_{n,x}^{h}$ in \eqref{EQ:local_global_events},
\begin{equation*}
\begin{split}
&\big\{ \text{$0$ lies in a finite cluster of $\{y \in \mathbb{Z}^2; \, y \text{ is $h(d)$-good} \}$} \big\}  \\
&\subseteq \big\{ \text{$0$ is $h(d)$-bad}\big\} \cup  \big\{ \text{$\exists$ $*$-circuit of $h(d)$-bad vertices around $0$ intersecting $B_{\infty}^{(2)}(0,3L_0)$} \big\} \\
&\quad \;  \cup \bigcup_{n \geq 0} \big\{ \text{$\exists$ $*$-circuit of $h(d)$-bad vertices around $0$ intersecting $(3L_n, 3L_{n+1}]e_1 \cap \mathbb{Z}$} \big\} \\
&\subseteq \Big[ \bigcup_{x \in B_\infty^{(2)}(0,3L_0)} ( G_y^{h(d)} )^c \Big] \cup \bigcup_{n\geq 0} \  \bigcup_{y \in \mathbb{L}_n^{(2)} \cap (3L_n, 3L_{n+1}]e_1} D_{n,y}^{h(d)}.
\end{split}
\end{equation*}
By the choice of scales in \eqref{EQ:local_global_scales2} and \eqref{EQ:local_global_L_0}, this yields, for all $d\geq c(\varepsilon)$,
\begin{equation*}
\begin{split}
&\mathbb{P}[\text{$0$ lies in a finite cluster of $\{y \in \mathbb{Z}^2; \, y \text{ is $h(d)$-good} \}$}]  \\
&\begin{array}{rcl}
\qquad & \hspace{-1ex} \leq  & \hspace{-1ex} \displaystyle cL_0^2 \mathbb{P}[( G_0^{h(d)} )^c] + \sum_{n\geq 0} 3l_n \cdot q_n(h(d)) \\
\qquad & \hspace{-1ex} \stackrel{\eqref{EQ:local_global_q_bound_final}}{\leq} & \hspace{-1ex}  \displaystyle c' d^2 \mathbb{P}[( G_0^{h(d)} )^c]  + 3 \sum_{n\geq 0} l_n^{-2} \\
\qquad  & \hspace{-1ex} \leq  & \hspace{-1ex} \displaystyle c' d^2 \mathbb{P}[( G_0^{h(d) + \varepsilon} )^c] + \sum_{n\geq 0} (c''d)^{-2a(1+a)^n} \\
\qquad & \hspace{-1ex} \stackrel{\eqref{EQ:local_global}}{<} & \hspace{-1ex}1,
\end{array}
\end{split}
\end{equation*}
where we have also used in the penultimate step that $G_0^h$ is decreasing (in $\varphi$) and that $l_n > L_n^a \geq L_0^{a(1+a)^n}$ for all $n \geq 0$. This completes the proof of \eqref{EQ:local_global3}, hence of Theorem \nolinebreak \ref{T:LOCAL_GLOBAL}.
\end{proof}

\begin{remark}
By a more careful analysis, the condition \eqref{EQ:local_global} in Theorem \ref{T:LOCAL_GLOBAL} can be somewhat relaxed. Indeed, \eqref{EQ:local_global2} continues to hold under the weaker assumption that
\begin{equation*}
 \limsup_{d \to \infty} d^{2 + \delta} \mathbb{P} [( G_0^{h(d) + \varepsilon} )^c ] = 0, \text{ for some $\delta > 0$}
\end{equation*} 
(using a choice of $a$ in \eqref{EQ:local_global_scales1} depending on $\delta$). However, this will not be of importance, as our proof will show that for the relevant choice of $h(d) = h_{\text{as}}(d) (1 - 8\varepsilon)$, with $\varepsilon > 0$ arbitrary (the factor of $8$ is just for convenience), the above probability decays to $0$ as $d\to \infty$ faster than any polynomial, see Theorem \ref{T:giant_comp} below. \hfill $\square$
\end{remark}

\subsection{Constructing substantial components} \label{S:SUBST_COMP}

With Theorem \ref{T:LOCAL_GLOBAL} at hand, in order to prove the lower bound \eqref{EQ:ASYMPTOTICS_LB}, we will check that condition \eqref{EQ:local_global} holds at level $h(d) = h_{\text{as}}(d) (1 - 8\varepsilon)$, for arbitrary $\varepsilon > 0$ (the factor $8$ is immaterial, and merely reflects the fact that a few more sprinkling operations will be performed on the way). This will involve showing that $\mathbb{H}$ contains a giant component above this level with sufficiently high probability, cf. \eqref{EQ:goodEventDef}. We will construct this component by gluing together smaller building blocks, so-called substantial components, which, by definition, have cardinality growing like a polynomial in $d$ of sufficiently high degree. In this subsection, we show that most vertices in $\mathbb{H}$ are either neighboring or contained in such a substantial component with high probability, see Corollary \ref{C:number_subst_comps} below for the precise statement. The substantial component neighboring a given point in $\mathbb{H}$ will be built as a connected subset of a (large) deterministic tree embedded in $\mathbb{H}$ and rooted at this point. In particular, the ``perturbative'' representation of Lemma \ref{L:conds_exps2} will enable us to show that, conditionally on an event of high probability, the law of the restriction of $E_{\varphi}^{\geq  h_{\text{as}}(1- 2\varepsilon)}$ to this tree dominates a Galton-Watson process (with suitable binomial offspring distribution) on the same tree, cf. Lemma \ref{L:subt_comp_dom} below. 

Let $\varepsilon > 0$. For the sake of clarity, in the remainder of this article, the dependence of constants on $\varepsilon$ will be kept implicit. We introduce a parameter
\begin{equation}\label{EQ:subst_comp_b}
b=b(\varepsilon)= 1+ \frac{11}{\varepsilon}  \ (\geq 1).
\end{equation}
Given $K \subset \mathbb{Z}^d$, we call $C$ a \textit{substantial component of $K$} if $C$ is a connected subset of $K$ containing at least $\lfloor d^\varepsilon / b \rfloor^{b-1}$ points. 
\begin{theorem} \label{P:SUBST_COMP} $(d \geq 3 , \,  0< \varepsilon < 1/3, \, x \in \mathbb{H})$
\begin{equation} \label{P:SUBST_COMP1} 
\begin{split}
\mathbb{P}[&\text{a neighbor of $x$ is contained in a }\\
&\text{substantial component of $\mathbb{H} \cap E_\varphi^{\geq h_{\textnormal{as}}(1- 2\varepsilon)}$}] \geq 1 - c e^{-c' d^\varepsilon}.
\end{split}
\end{equation}
\end{theorem}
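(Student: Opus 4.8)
The plan is to embed into $\mathbb{H}$ a deterministic rooted $r$-regular tree $\mathbb{T}$, with root at a neighbor $x'$ of the given vertex $x$, of branching number $r = r(\varepsilon, d)$ comparable to $d$ and of bounded depth $D = D(\varepsilon)$, and then to show that the restriction of $E_\varphi^{\geq h_{\textnormal{as}}(1-2\varepsilon)}$ to $\mathbb{T}$ stochastically dominates a supercritical Galton-Watson process on $\mathbb{T}$ with binomial offspring law; from this the probability of survival down to generation $D$ (producing a cluster of size $\geq \lfloor d^\varepsilon/b\rfloor^{b-1}$) is bounded below by $1 - ce^{-c'd^\varepsilon}$. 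To set up the embedding, I would use the large codimension of $\mathbb{H} = \{0,1\}^d$: a root and its children are chosen among the $d$ coordinate flips, and at each subsequent level one only flips coordinates not yet used along the current branch, which keeps the $r^D$ tree vertices \emph{distinct} in $\mathbb{H}$ as long as $rD \ll d$; this forces $r$ to be of order $d/D(\varepsilon)$, hence comparable to $d$ for each fixed $\varepsilon$. The key point is that along the tree, since all tree vertices lie within $\ell^1$-distance $2D$ of the root, and since the whole tree has only polynomially-in-$d$ many vertices, the perturbative representation of Lemma \ref{L:conds_exps2} applies with small error.

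The core step is the stochastic domination, carried out via the recursive construction \eqref{L:conds_exps2_phi}--\eqref{L:conds_exps2_psi}: enumerating the tree vertices in breadth-first order, we write $\varphi_{x_n} = \psi_{x_n} + \mu_{x_n}^{K_n}$ where $\psi_{x_n}$ is the independent field with variance $g_{K_n^c}(x_n,x_n) \leq g(0) = 1 + o(1)$, and the shift $\mu_{x_n}^{K_n}$ is controlled on a good event $\mathcal{G}$ on which $\max_{y \in \mathbb{T}}|\varphi_y|$ is not too large (say $\leq e^{cd}$ or a polynomial in $d$; by \eqref{EQ:BTIS} this holds with probability $\geq 1 - ce^{-c'd\log d}$, which is much smaller than the target error). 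On $\mathcal{G}$, the dominant contribution to $\mu_{x_n}^{K_n}$ comes from the \emph{parent} of $x_n$ in $\mathbb{T}$ (at $\ell^1$-distance $1$, giving weight essentially $g(0)^{-1}$, i.e. close to $1$), while the contributions of all other already-discovered tree vertices are negligible: they sit at $\ell^1$-distance $\geq 2$, and by \eqref{return_probas1}, \eqref{var_bound2}, \eqref{var_bound3} (as in the proof of Lemma \ref{L:zeta_i_bound}) the total weight $\sum p_{x_n,y}^{K_n}$ over such $y$ is $O(|\mathbb{T}|/d) = O(d^{-1+o(1)})$, which multiplied by the crude bound on $\max|\varphi_y|$ is still $o(h_{\textnormal{as}})$ after choosing the tree depth/size appropriately. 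Consequently, writing $\varphi_{\text{parent}} \geq h_{\textnormal{as}}(1-2\varepsilon)$ and $\varphi_{\text{parent}}$ not too large, one gets that the event $\{\varphi_{x_n} \geq h_{\textnormal{as}}(1-2\varepsilon)\}$ contains the event $\{\psi_{x_n} \geq h_{\textnormal{as}}(1-2\varepsilon) - \varphi_{\text{parent}} \cdot (g(0)^{-1} + o(1)) + (\text{small})\}$, and since $\varphi_{\text{parent}} \geq h_{\textnormal{as}}(1-2\varepsilon)$ this threshold is at most roughly $h_{\textnormal{as}}(1-2\varepsilon)(1 - g(0)^{-1}) + o(h_{\textnormal{as}})$, which is $o(h_{\textnormal{as}})$ — in particular much below $h_{\textnormal{as}}\sqrt{\text{const}}$. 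Hence, conditionally on the parent being open and on $\mathcal{G}$, each of the $r$ children of $x_n$ is independently open (the $\psi$'s being independent) with probability $\geq p := \mathbb{P}[\mathcal{N}(0, g(0)) \geq \theta h_{\textnormal{as}}]$ for a $\theta = \theta(\varepsilon) < 1$ bounded away from $1$; by the Gaussian tail estimate \eqref{GRV_basic_estimate} and \eqref{EQ:h_as} this gives $p \geq d^{-\theta^2(1+o(1))} \geq c d^{-\varepsilon}$ for $\varepsilon$ small (adjusting $\theta$), so that $rp \to \infty$, i.e. the dominating Galton-Watson process is supercritical with mean offspring $\to \infty$.

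Once the domination by a $\text{Bin}(r,p)$-offspring Galton-Watson tree is in place on $\mathcal{G}$, the conclusion is standard: with $\lambda := rp \geq cd^{1-\varepsilon} \gg 1$, a Chernoff/Azuma estimate shows that each generation $k$ of the Galton-Watson tree has at least $(\lambda/2)^k$ vertices except with probability exponentially small in $\lambda^{k-1}$, so after $D = D(\varepsilon) = \lceil b \rceil$ generations the cluster has at least $(\lambda/2)^D \geq \lfloor d^\varepsilon/b\rfloor^{b-1}$ vertices (here one checks the exponents: $\lambda^D$ is a power of $d$ exceeding $b-1$ times the $d^\varepsilon$-scale, which is exactly why $b$ is chosen as in \eqref{EQ:subst_comp_b} and the offspring is of order $d^{1-\varepsilon}$ rather than $d^\varepsilon$ — one has plenty of room), with failure probability $\leq c e^{-c'\lambda^{D-1}} \leq c e^{-c' d^\varepsilon}$. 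Intersecting with $\mathbb{P}[\mathcal{G}^c] \leq ce^{-c'd\log d}$ and recalling that the root of $\mathbb{T}$ is a neighbor of $x$ yields \eqref{P:SUBST_COMP1}. The main obstacle is the bookkeeping in the domination step: one must simultaneously (i) keep the tree small enough (polynomial in $d$, bounded depth) that the accumulated shift from non-parent ancestors stays $o(h_{\textnormal{as}})$ and the good event $\mathcal{G}$ has overwhelming probability, yet (ii) keep the branching $r \asymp d$ and depth $\asymp b$ large enough that $(\lambda/2)^D$ genuinely reaches the substantial-component threshold $\lfloor d^\varepsilon/b\rfloor^{b-1}$, and (iii) verify that along \emph{every} branch the set of previously-discovered vertices is exactly the ancestors plus some already-processed cousins, all at $\ell^1$-distance $\geq 2$ except the parent — this last point uses crucially that the embedding flips disjoint coordinate-sets along a branch so that no non-parent ancestor is a lattice-neighbor.
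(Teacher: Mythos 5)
Your overall architecture matches the paper's: a deterministic tree $\mathbb{T}\subset\mathbb{H}$ rooted at a neighbor of $x$, branching $\asymp d$ and depth $O_\varepsilon(1)$ built from disjoint coordinate blocks, discovered via Lemma~\ref{L:conds_exps2}, with a Galton--Watson domination and then a Chernoff bound. But there is a genuine quantitative error at the heart of the domination step, and in fact the heuristic picture you are using is backwards in high dimension.

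You claim the shift $\mu_{x_n}^{K_n}$ is dominated by a near-$1$ contribution from the parent, ``at $\ell^1$-distance $1$, giving weight essentially $g(0)^{-1}$, i.e.\ close to $1$.'' This is wrong: $P_{x_n}[H_{\text{parent}}<\infty]=g(e_1)/g(0)=(g(0)-1)/g(0)\approx 1/(2d)$ by~\eqref{g(0)}, so the parent's weight tends to $0$, not $1$. (More strongly, by~\eqref{return_probas1} the \emph{entire} quantity $\sum_{y\in K_n}p_{x_n,y}^{K_n}=P_{x_n}[H_{K_n}<\infty]\le c/d$, since $K_n$ lies in a bounded $\ell^1$-ball, cf.~\eqref{L:subst_comp_hit_probas}; in particular there is no separation of the parent from the other ancestors, and the estimate $O(|\mathbb{T}|/d)$ for the remainder is incorrect bookkeeping — it is a single hitting probability, not a sum of $|\mathbb{T}|$ separate $O(1/d)$ terms.) As a consequence, your deduction that the threshold for $\psi_{x_n}$ drops to $o(h_{\text{as}})$, hence $p\gtrsim d^{-\varepsilon}$, fails. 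The conditioning on the parent being open contributes at most $O(h_{\text{as}}/d)$ to the shift and is essentially irrelevant. The correct conclusion, and what the paper proves in Lemma~\ref{L:subt_comp_dom}, is that $\{\psi_{x_n}\ge h_{\text{as}}(1-\tfrac32\varepsilon)\}\subseteq\{\varphi_{x_n}\ge h_{\text{as}}(1-2\varepsilon)\}$ once the accumulated $O(h_{\text{as}}/d)$ error is absorbed, yielding an offspring probability of only $p\approx d^{-(1-3\varepsilon/2)}$. The argument survives because the branching ratio $r=\lfloor d/b\rfloor$ is large, giving mean offspring $rp\approx d^{3\varepsilon/2}/b$, which still exceeds the $d^\varepsilon/b$ children per vertex needed to reach the $\lfloor d^\varepsilon/b\rfloor^{b-1}$ threshold after $b-1$ generations — but your reasoning about \emph{why} the process is supercritical is wrong, and your claimed $p$ is off by nearly a full power of $d$. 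A related point: the paper's good event $M$ constrains the independent $\psi$'s (rather than $\max|\varphi|$), which combines more cleanly with the recursive construction and the coefficient bound~\eqref{L:subst_comp_alpha_bound}; your $\max|\varphi|$ route can be made to work but requires the $\varphi$-to-$\psi$ translation anyway.
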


\begin{proof}
By symmetry, it suffices to consider the case $x=0$. We begin with some notation. Let $\varepsilon \in (0,1/3)$. We assume without loss of generality that $d \geq  b(\varepsilon) = b$, with $b$ as defined in \eqref{EQ:subst_comp_b} (it suffices to show Theorem \ref{P:SUBST_COMP} for $d \geq c'$, since the remaining cases can be taken care of by adapting the constant $c$ appearing in \eqref{P:SUBST_COMP1}), and introduce $b$ consecutive subsets
\begin{equation*}
I_k = \{(k-1) \lfloor d/b \rfloor + 1, \ldots, k \lfloor d/b \rfloor  \}, \text{ $1 \leq k \leq b$},
\end{equation*}
of $\{1, 2, \ldots, d\}$. We will interpret a part of $\mathbb{H}$ as a tree by considering
\begin{equation} \label{EQ:subst_comp_tree}
\mathbb{T} =
 \Big\{  \sum_{k=1}^j e_{i_k} \in  \mathbb{H} \;  ;  \;  1 \leq j \leq b \text{ and $i_k \in I_k$ for all $1\leq k \leq j$}  \Big\},
\end{equation}
where $e_i$ denotes the canonical unit vector in the $i$-th direction, for $1\leq i \leq d$. Given $x \in \mathbb{T}$, we refer to $j$ in the (unique) decomposition of $x =  \sum_{k=1}^j e_{i_k}$, with $i_k \in I_k$, $1\leq k \leq j$, as the generation of $x$, and to the set $\{ x + e_{i} ; \; i \in I_{j+1} \}$ (understood as $\emptyset$ if $j=b$) as the children of $x$. Thus, every vertex in $\mathbb{T} $ in generation smaller than $b$ has precisely $\lfloor d/b \rfloor$ children, and $b$ corresponds to the depth of $\mathbb{T} $. Furthermore, it will be convenient to set $ \mathbb{T}_0 = \mathbb{T}  \cup \{0\}$. We now show that
\begin{equation}\label{EQ:subst_comp1}
\begin{split}
\mathbb{P}[&\text{$0$ has a neighbor in $\mathbb{T}$ contained in a }\\
&\text{substantial component of $\mathbb{H} \cap E_\varphi^{\geq h_{\textnormal{as}}(1- 2\varepsilon)}$}] \geq 1 - c e^{-c' d^\varepsilon},
\end{split}
\end{equation}
for all $d \geq c$, which implies \eqref{P:SUBST_COMP1}. To this end, we enumerate the elements of $\mathbb{T}$ as a sequence $x_n$, $1 \leq n \leq |\mathbb{T}|$ in a hierarchical way, i.e. such that $|x_{n}|_1 \leq |x_{n+1}|_1$ for all $1 \leq n < |\mathbb{T}|$, and set $K_n = \{ x_1,\dots , x_{n-1}\}$, for $1 \leq n \leq |\mathbb{T}|$. We construct the field $(\varphi_x)_{x \in \mathbb{T}}$ using Lemma \ref{L:conds_exps2} according to this ordering. Thus, introducing a family $(\psi_x)_{x \in \mathbb{T}}$ of independent random variables with
\begin{equation*}
\psi_{x_n} \sim \mathcal{N}(0, g_{K_n^c}(x_n,x_n)), \text{ for all $1 \leq n \leq |\mathbb{T}|$,}
\end{equation*}
we define the field $(\varphi_x)_{x \in \mathbb{T}}$ in terms of $(\psi_x)_{x \in \mathbb{T}}$ as in \eqref{L:conds_exps2_phi}. Moreover, letting 
\begin{equation} \label{EQ:subst_comp_const}
c_3' = \max_{0 \leq k \leq b} c(k), \text{ with $c(k)$, $k \geq 0$, as appearing in \eqref{return_probas1}},
\end{equation} 
we introduce the independent events $\widehat{M}_n = \{ \psi_{x_n} \geq - \frac{\varepsilon }{ 4 c_3'} d h_{\text{as}}(d) \}$, for $1 \leq n \leq |\mathbb{T}|$, and define 
\begin{equation} \label{EQ:subst_comp_M}
M_n = \bigcap_{i=1}^n \widehat{M}_i = \Big\{ \min_{x \in K_{n+1}} \psi_{x} \geq - \frac{\varepsilon }{ 4 c_3'} d h_{\text{as}}(d) \Big\}, \text{ for $0 \leq n \leq  |\mathbb{T}|$, and }  M \stackrel{\text{def.}}{=}  M_{|\mathbb{T}|} 
\end{equation}
(with the convention $M_0 = \mathbb{R}^{\mathbb{Z}^d}$). Thus, the events $M_n$ decrease towards $ M$. Observing that $|\mathbb{T}| \leq c \lfloor d/ b \rfloor^b$, cf. \eqref{EQ:subst_comp_tree}, and that $\text{Var}(\psi_x) \geq 1$ for all $x \in \mathbb{T}$, \eqref{GRV_basic_estimate} yields that
\begin{equation*}
\mathbb{P}[M^c] \leq |\mathbb{T}| \cdot \sup_{x \in \mathbb{T}} \mathbb{P}[\psi_{x} > (4c_3')^{-1} \varepsilon d h_{\text{as}}(d)] \leq e^{-c d^2 \log d},
\end{equation*}
for all $d \geq 3$. Hence, by looking separately at the intersection of the event 
\begin{equation*}
\{\text{no neighbor of $0$ in $\mathbb{T}$ is contained in a substantial component of $\mathbb{H} \cap E_\varphi^{\geq h_{\textnormal{as}}(1- 2\varepsilon)}\}$}
\end{equation*}
with $M$ and its complement, respectively, we deduce that in order to prove \eqref{EQ:subst_comp1}, it suffices to show that
\begin{equation} \label{EQ:subst_comp3}
\begin{split}
\mathbb{P}[&\text{$0$ has a neighbor in $\mathbb{T} $  which is contained in a }\\
&\text{substantial component of $\mathbb{H} \cap E_\varphi^{\geq h_{\textnormal{as}}(1- 2\varepsilon)}$} | M] \geq 1 -  c e^{-c'd^\varepsilon}.
\end{split}
\end{equation}
holds for $d \geq c$. We now show that, conditionally on $M$, the collection of variables 
$$
Y_x \stackrel{\text{def.}}{=} 1\{\varphi_x \geq h_{\textnormal{as}}(1-2\varepsilon)\}, \quad x \in \mathbb{T},
$$
stochastically dominates a Galton-Watson chain on $\mathbb{T}$ with suitable (binomial) offspring distribution. More precisely, we prove the following.

\begin{lemma} \label{L:subt_comp_dom} $(0< \varepsilon < 1/3)$

\medskip
\noindent Let $\mathbf{b}_x$, $x \in \mathbb{T}$, denote a family of independent Bernoulli variables under some auxiliary probability measure $\mathbf{P}$ such that $\mathbf{P}[\mathbf{b}_x = 1]= 1- \mathbf{P}[\mathbf{b}_x = 0]= d^{-(1- \frac{3}{2}\varepsilon)}$ for $x\in \mathbb{T}$. Then, for all $d\geq c$,
\begin{equation} \label{L:subt_comp_dom1}
(Y_x)_{x \in \mathbb{T}} \circ \mathbb{P}[\, \cdot \, | M] \; \geq_{\textnormal{st.}} \; (\mathbf{b}_x)_{x \in \mathbb{T}} \circ \mathbf{P}. 
\end{equation}
\end{lemma}

\begin{proof4}
We begin with the following remark. From \eqref{L:conds_exps2_phi}, one immediately infers that $\varphi_{x_{n}} = \sum_{k=1}^{n} \alpha_{n,k} \psi_{x_k}$, for $1 \leq n \leq |\mathbb{T}|$, where
\begin{equation*} \label{L:subst_comp_alpha}
\alpha_{n,n}=1, \quad \alpha_{n,k} = \sum_{l=k}^{n-1} p_{x_n,x_l}^{K_n} \alpha_{l,k}, \text{ for $1 \leq n \leq |\mathbb{T}|$ and $1 \leq k < n$, }
\end{equation*}
with $p_{x_n,x_l}^{K_n}$, $1 \leq l < n$, as defined in \eqref{EQ:p_x,y^K}. We claim that
\begin{equation} \label{L:subst_comp_alpha_bound}
\sum_{k=1}^n  \alpha_{n,k} \leq 2, \text{ for all $ 1 \leq n \leq |\mathbb{T}|$ and $d \geq c$}.
\end{equation}
Indeed,
\begin{equation} \label{L:subst_comp_alpha}
\sum_{k=1}^n  \alpha_{n,k} = 1 + \sum_{l=1}^{n-1} p_{x_n,x_l}^{K_n}  \sum_{k=1}^l \alpha_{l,k} \leq 1+ P_{x_n}[H_{K_n} < \infty]  \cdot \sup_{1 \leq l < n } \sum_{k=1}^l \alpha_{l,k},
\end{equation}
for all $1 \leq n \leq |\mathbb{T}|$. Since the elements of the tree $\mathbb{T}$ are enumerated in a hierarchical fashion, $K_n \subset B_1(0, |x_{n-1}|_1)$ for all $ 1 < n \leq |\mathbb{T}|$, and therefore
\begin{equation} \label{L:subst_comp_hit_probas}
P_{x_{n}}[H_{K_n} < \infty] \leq P_{x_{n}}[\widetilde{H}_{B_1(0, |x_{n-1}|_1)} < \infty] \stackrel{\eqref{return_probas1}, \eqref{EQ:subst_comp_const}}{\leq} c_3'/d
\end{equation}
(by construction, $|z|_1 \leq b$, for $z \in \mathbb{T}$, and the depth $b$ does not depend on $d$, cf. \eqref{EQ:subst_comp_b}). In particular, this probability is less than $1/2$, uniformly in $n$ for all $ 1 < n \leq |\mathbb{T}|$, whenever $d \geq c$. Inserting this into 
\eqref{L:subst_comp_alpha} and a trivial inductive argument yield \eqref{L:subst_comp_alpha_bound}. 

We proceed with the proof of \eqref{L:subt_comp_dom1}. Let
$$
Z_x = 1\{ \psi_x \geq h_{\text{as}}(1-\frac{3}{2}\varepsilon) \}, \quad \text{ for $x \in \mathbb{T}$}.
$$
We first aim at showing that for all $d \geq c$,
\begin{equation}\label{L:subst_comp_dom11}
\begin{split}
&\text{the law of $(Y_x)_{x\in \mathbb{T}}$ under $\mathbb{P}[\, \cdot \, |M]$, stochastically} \\
&\text{dominates the law of $(Z_x)_{x\in \mathbb{T}}$ under $\mathbb{P}[\, \cdot \, |M]$}.
\end{split}
\end{equation}
To this end, we claim that for all $d \geq c$ and $ 1 \leq n \leq |\mathbb{T}|$,
\begin{equation} \label{L:subst_comp_dom1}
\{ \varphi_{x_{n}} \geq h_{\text{as}}(1-2\varepsilon) \} \supseteq M_{n-1} \cap \{ \psi_{x_n} \geq h_{\text{as}}(1-\frac{3}{2}\varepsilon) \}.
\end{equation}
Indeed, this is trivial for $n=1$ (recall that $\varphi_{x_1} = \psi_{x_1}$, cf. \eqref{L:conds_exps2_phi} and $M_0 = \mathbb{R}^{\mathbb{Z}^d})$. By construction, for all $ 1 < n \leq |\mathbb{T}|$, on the event $M_{n-1} \cap \{ \psi_{x_{n}} \geq h_{\text{as}}(1-\frac{3}{2}\varepsilon) \}$,
\begin{equation*}
\begin{array}{rcl}
\varphi_{x_{n}} \hspace{-1ex} & \stackrel{\eqref{L:conds_exps2_phi}}{\geq} & \hspace{-1ex} \psi_{x_{n}} +  P_{x_{n}}[H_{K_{n}} < \infty] \cdot \displaystyle \min_{1\leq i < n} \varphi_{x_i} \\
& \geq & \hspace{-1ex} \psi_{x_{n}} +  P_{x_{n}}[H_{K_{n}} < \infty] \cdot \displaystyle \min_{1\leq i < n} \big[ \sum_{k=1}^i \alpha_{i,k} \cdot \min_{1 \leq l < n} \psi_{x_l} \big] \\
&\stackrel{\eqref{EQ:subst_comp_M}}{\geq} & \hspace{-1ex} \displaystyle h_{\text{as}}(1-\frac{3}{2}\varepsilon) -  \frac{\varepsilon}{4 c_3'} d h_{\text{as}} \cdot P_{x_{n}}[H_{K_{n}} < \infty]  \cdot \max_{1\leq i < n}  \sum_{k=1}^i \alpha_{i,k}.
\end{array}
\end{equation*}
Inserting the bounds \eqref{L:subst_comp_alpha_bound}, \eqref{L:subst_comp_hit_probas} into the last line immediately yields that $\varphi_{x_n} \geq h_{\text{as}}(1-2\varepsilon)$, for $d\geq c$, and \eqref{L:subst_comp_dom1} follows. But since the sets $M_n$ decrease towards $M$, \eqref{L:subst_comp_dom1} actually implies that, 
$$
(M \cap\{ \varphi_{x_{n}} \geq h_{\text{as}}(1-2\varepsilon) \}) \, \supseteq \, (M \cap \{ \psi_{x_n} \geq h_{\text{as}}(1-\frac{3}{2}\varepsilon) \}),
$$
for all $1\leq n \leq |\mathbb{T}|$ and $d\geq c$, or, equivalently, that
$$
\mathbb{P}[Y_x \geq Z_x, \, \text{for all } x\in \mathbb{T}\,|\, M]=1, \text{ whenever $d\geq c$}.
$$
By a classical theorem of Strassen, see \cite{Stra}, the existence of this monotone coupling is equivalent to the asserted stochastic domination in \eqref{L:subst_comp_dom11}.

Finally, we explain how \eqref{L:subt_comp_dom1} follows from \eqref{L:subst_comp_dom11}. First, notice that by definition of $M$, see \eqref{EQ:subst_comp_M}, the variables $Z_x$, $x\in \mathbb{T}$ are still (conditionally) independent under $\mathbb{P}[\, \cdot \, |M]$. Next, since $1 \leq \mathbb{E}[\psi_{x_n}^2] \leq g(0)$ for all $1 \leq n \leq |\mathbb{T}|$, we can arrange that $\tilde{g}_n \stackrel{\text{def.}}{=} g(0)/ \mathbb{E}[\psi_{x_n}^2] \leq (1-2\varepsilon)/(1-\frac{3}{2}\varepsilon)^2$ (note that this last quantity is always greater than $1$ for $\varepsilon \in (0,1/3)$). Moreover, recalling the definition of $\widehat{M}_n$ above \eqref{EQ:subst_comp_M}, we have that
\begin{equation*}
\begin{array}{rcl}
\mathbb{P}[\psi_{x_n} \geq  h_{\text{as}}(1-\frac{3}{2}\varepsilon)| M] & = & \hspace{-1ex}\mathbb{P}[\psi_{x_n} \geq  h_{\text{as}}(1-\frac{3}{2}\varepsilon)| \widehat{M}_n] \\
 & \geq & \hspace{-1ex} \mathbb{P}[\psi_{x_n} \geq  h_{\text{as}}(1-\frac{3}{2}\varepsilon)] \\
\hspace{-1ex} & \stackrel{\eqref{GRV_basic_estimate}}{\geq} & \hspace{-1ex} \displaystyle \frac{c}{h_{\text{as}}(1-\frac{3}{2}\varepsilon)} \cdot d^{-\tilde{g}_n(1-\frac{3}{2}\varepsilon)^2}  \geq d^{-(1-\frac{3}{2}\varepsilon)},
\end{array}
\end{equation*} 
for all $d\geq c'$. A standard coupling then yields that the law of $(Z_x)_{x\in \mathbb{T}}$ under $\mathbb{P}[\, \cdot \, | M]$ dominates the law of $(\mathbf{b}_x)_{x\in \mathbb{T}}$ under $\mathbf{P}$, for all sufficiently large $d$. Together with \eqref{L:subst_comp_dom11}, this implies \eqref{L:subt_comp_dom1}, and thus completes the proof of Lemma \nolinebreak \ref{L:subt_comp_dom}.
\hfill $\square$
\end{proof4}
\bigskip
\noindent We continue with the proof of \eqref{EQ:subst_comp3}. We introduce, for $x \in \mathbb{T}_0 \cap B_1(0,b-1)$, the variable
\begin{equation*} 
\mathbf{N}(x) = \sum_{i \in I_{|x|_1 +1} } \mathbf{b}_{x + e_i},
\end{equation*}
which can be interpreted as the number of ``existing'' children of $x$ in the Galton-Watson chain $(\mathbf{b}_x)_{x\in \mathbb{T}}$. We will need the following estimate.
\begin{lemma} \label{L:subst_comp_gwbound} $( \varepsilon \in (0,1/3), \, d\geq c)$
\begin{equation}\label{EQ:subst_comp_gwbound}
\mathbf{P}[\mathbf{N}(x) < d^\varepsilon / b, \text{ for some $x \in \mathbb{T}_0 \cap B_1(0,b-1)$}] \leq  e^{-c d^\varepsilon}.
\end{equation}
\end{lemma}

\begin{proof5}
Fix some $x \in \mathbb{T}_0 \cap B_1(0,b-1)$ and $\varepsilon \in (0,1)$. It suffices to show that 
\begin{equation} \label{EQ:chern1}
\mathbf{P}[\mathbf{N}(x) < d^\varepsilon / b] \leq  e^{-c d^\varepsilon}, \text{ when $d \geq c$,}
\end{equation}
for \eqref{EQ:subst_comp_gwbound} then follows with a simple union bound, observing that $|\mathbb{T}_0| \leq c \lfloor d/b \rfloor^b$. Since $\mathbf{N}(x)$ is a sum of independent $\{0,1\}$-valued random variables, a classical Chernov estimate (see for example \cite{Kle}, Ch. 5.3, p.111) gives
$$
\mathbf{P}[\mathbf{N}(x) < (1-\delta)\mathbf{E}[\mathbf{N}(x)]\, ]\leq e^{-\frac{\delta^2 \mathbf{E}[\mathbf{N}(x)]}{2}}.
$$
Observing that $\mathbf{E}[\mathbf{N}(x)]\geq \lfloor \frac{d}{b}  \rfloor d^{-(1-\frac{3}{2}\varepsilon)} \geq c d^{\frac{3}{2}\varepsilon}$, for all $x\in \mathbb{T}$, this bound (with, say, $\delta =\frac{1}{2}$) is more than enough to deduce \eqref{EQ:chern1}. \hfill $\square$
\bigskip
\end{proof5}

\noindent With Lemmas \ref{L:subt_comp_dom} and \ref{L:subst_comp_gwbound} at hand, the claim \eqref{EQ:subst_comp3} follows readily. First, observe that if the event  $\{ \mathbf{N}(x) \geq d^\varepsilon / b, \text{ for all $x \in \mathbb{T}_0 \cap B_1(0,b-1)$} \}$ occurs, then the origin has a neighbor in $\mathbb{T}_0$ (in fact even $ \lfloor d^\varepsilon / b \rfloor$ such) which belongs to a connected component of $\{ x\in \mathbb{T}; \, \mathbf{b}_x =1 \}$ containing at least
\begin{equation*}
1 + \sum_{k=2}^b \Big\lfloor \frac{d^\varepsilon}{ b} \Big\rfloor^{k-1} \geq \;  \Big\lfloor \frac{d^\varepsilon}{ b} \Big\rfloor^{b-1}
\end{equation*}
points, i.e., a substantial component of $\mathbb{T}$ (recall the definition below \eqref{EQ:subst_comp_b}). Hence, we obtain, for all $d\geq c$, applying Lemmas \ref{L:subt_comp_dom} and \ref{L:subst_comp_gwbound},
\begin{equation*}
\begin{array}{cl}
 & \hspace{-1ex}\mathbb{P}[\text{$0$ has a neighbor in $\mathbb{T}_0 $ which is contained in a substantial component} \\
 & \hspace{-1ex }\text{of $\mathbb{H} \cap E_\varphi^{\geq h_{\textnormal{as}}(1- 2\varepsilon)}$} | M] \\
\stackrel{\eqref{L:subt_comp_dom1}}{\geq} 
&\mathbf{P}[\text{$0$ has a child in $\mathbb{T}_0 $  which is contained in a substantial component}\\
 & \hspace{-1ex} \text{of $\{x\in \mathbb{T} ; \, \mathbf{b}_x = 1 \}$}] \\
 \geq & \hspace{-1ex} \mathbf{P}[\mathbf{N}(x) \geq d^\varepsilon / b, \text{ for all $x \in \mathbb{T}_0 \cap B_1(0,b-1)$}] \\
\stackrel{\eqref{EQ:subst_comp_gwbound}}{\geq} & \hspace{-1ex} 1 -  e^{-cd^\varepsilon},
\end{array}
\end{equation*}
which is \eqref{EQ:subst_comp3}. The proof of Theorem \ref{P:SUBST_COMP} is now complete.
\end{proof}

\noindent For future reference, we introduce  the random set
\begin{equation}\label{EQ:event_E}
\begin{split}
B_{\varepsilon}=
&\{ x\in \mathbb{H}; \; \text{$x$ has no neighbor in $\mathbb{H}$ contained} \\
&\text{in a substantial component of $\mathbb{H} \cap E_{\varphi}^{\geq h_{\text{as}}(1-2\varepsilon)}$}\}, 
\end{split}
\end{equation}
for arbitrary $\varepsilon \in (0,1/3)$, and think of the points in $B_{\varepsilon}$ as \textit{bad} points in $\mathbb{H}$. Theorem \ref{P:SUBST_COMP} has the following immediate
\begin{corollary}\label{C:number_subst_comps}
$(d\geq 3$, $0< \varepsilon<1/3)$
\begin{equation}\label{EQ:number_subst_comps}
\mathbb{P}[|B_{\varepsilon}| > |\mathbb{H}|e^{-c_4'd^\varepsilon}] \leq ce^{-c'd^\varepsilon}.
\end{equation}
\end{corollary}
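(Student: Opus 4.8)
The plan is to deduce Corollary \ref{C:number_subst_comps} from Theorem \ref{P:SUBST_COMP} by a straightforward first-moment argument. First I would observe that, by the very definition of $B_\varepsilon$ in \eqref{EQ:event_E} (and noting that any neighbor of $x$ lying in a substantial component of $\mathbb{H}\cap E_\varphi^{\geq h_{\text{as}}(1-2\varepsilon)}$ automatically belongs to $\mathbb{H}$, since such a component is a subset of $\mathbb{H}$), the event $\{x\in B_\varepsilon\}$ is precisely the complement of the event considered in \eqref{P:SUBST_COMP1}. Hence Theorem \ref{P:SUBST_COMP}, which holds for every $x\in\mathbb{H}$, gives $\mathbb{P}[x\in B_\varepsilon]\leq c\,e^{-c'd^\varepsilon}$ for all $x\in\mathbb{H}$ and $d\geq 3$, with $c,c'$ the constants appearing in \eqref{P:SUBST_COMP1}.

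Next, by linearity of expectation, $\mathbb{E}[|B_\varepsilon|]=\sum_{x\in\mathbb{H}}\mathbb{P}[x\in B_\varepsilon]\leq |\mathbb{H}|\,c\,e^{-c'd^\varepsilon}$. I would then fix $c_4'\stackrel{\text{def.}}{=}c'/2$ and apply Markov's inequality to obtain
\[
\mathbb{P}\big[|B_\varepsilon|>|\mathbb{H}|e^{-c_4'd^\varepsilon}\big]\leq\frac{\mathbb{E}[|B_\varepsilon|]}{|\mathbb{H}|e^{-c_4'd^\varepsilon}}\leq c\,e^{-(c'-c_4')d^\varepsilon}=c\,e^{-(c'/2)d^\varepsilon},
\]
which is exactly \eqref{EQ:number_subst_comps} after relabeling the constant $c'$ (the $c$ here being unchanged, the $c'$ there being $c'/2$).

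There is essentially no obstacle — the corollary is genuinely immediate, as the statement itself indicates — and the only point requiring the slightest care is the choice of $c_4'$: it must be taken strictly smaller than the decay exponent $c'$ furnished by Theorem \ref{P:SUBST_COMP}, so that the loss $e^{c_4'd^\varepsilon}$ incurred in the Markov step is still beaten by the first-moment bound $e^{-c'd^\varepsilon}$. Any $c_4'\in(0,c')$ works, and $c'/2$ is a convenient choice.
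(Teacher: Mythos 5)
Your proposal is correct and follows exactly the same route as the paper: a first-moment bound on $|B_\varepsilon|$ via the per-vertex estimate \eqref{P:SUBST_COMP1} and linearity of expectation, followed by Markov's inequality with the threshold $|\mathbb{H}|e^{-c_4'd^\varepsilon}$ (the paper calls this Chebyshev's inequality but it is the same first-moment argument). The small observation you make — that a neighbor lying in a substantial component of $\mathbb{H}\cap E_\varphi^{\geq h_{\textnormal{as}}(1-2\varepsilon)}$ is automatically in $\mathbb{H}$, so the two phrasings of the event agree — is correct and is exactly the point needed to align Theorem \ref{P:SUBST_COMP} with the definition \eqref{EQ:event_E}.
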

\begin{proof}
On account of \eqref{P:SUBST_COMP1}, Chebyshev's inequality in the form 
\begin{align*}
\mathbb P [ |B_{\varepsilon}| > \lambda] \leq \lambda^{-1} \sum_{x\in \mathbb{H}} \mathbb{P}[&x \text{ has no neighbor in $\mathbb{H}$ contained in a} \\
&\text{substantial component of $\mathbb{H} \cap E_{\varphi}^{\geq h_{\text{as}}(1-2\varepsilon)}$]},
\end{align*}
with $\lambda =  |\mathbb{H}|e^{-c_4'd^\varepsilon}$ and suitable $c_4'>0$, readily yields \eqref{EQ:number_subst_comps}.
\end{proof}

\subsection{Connecting substantial components} \label{S:CONNECTING_SUBST_COMPS}

In this subsection, we complete the proof of Theorem \ref{T:ASYMPTOTICS_LB}. In what follows, and in accordance with our definition in the paragraph following \eqref{EQ:goodEventDef}, given a (random) set $K \subset \mathbb{H}_x$, we call any connected subset $C$ of $K$ satisfying $|\overline{C}^{\mathbb{H}_x}|\geq (1-d^{-2})2^d$ a \textit{giant} component of $K$.

We will prove Theorem \ref{T:ASYMPTOTICS_LB} by verifying the finite-size criterion \eqref{EQ:local_global} when $h(d)= h_{\text{as}}(d)(1-8\varepsilon)$, for all sufficiently small $\varepsilon > 0$. In order to deduce that the event $G_0^{h_{\text{as}}(1-8\varepsilon)}$ appearing in this context (recall \eqref{EQ:goodEventDef}) occurs with sufficiently high probability, we will use isoperimetry considerations to patch together the substantial components of $E_{\varphi}^{\geq h_{\text{as}}(1-2\varepsilon)}$ we have just constructed, see Corollary \ref{C:number_subst_comps} above, first to form a giant component in $E_{\varphi}^{\geq h_{\text{as}}(1-5\varepsilon)}\cap \mathbb{H}$, and then to connect the latter to neighboring giant components within $E_{\varphi}^{\geq h_{\text{as}}(1-8\varepsilon)}$. This is the object of Theorem \ref{T:giant_comp}. The lower bound \eqref{EQ:ASYMPTOTICS_LB} then follows readily, by virtue of Theorem \ref{T:LOCAL_GLOBAL}.

\begin{theorem} \label{T:giant_comp} 
$(d\geq 3,$ $0< \varepsilon<1/3)$
\begin{equation} \label{EQ:giant_comps_conn}
\mathbb{P} [ G_0^{h_{\textnormal{as}}(1-8\varepsilon)}] \geq 1 - c e^{-c'd^{\varepsilon}}.
\end{equation}
\end{theorem}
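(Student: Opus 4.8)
The plan is to realize $G_0^{h_{\textnormal{as}}(1-8\varepsilon)}$ through \emph{two successive sprinkling operations}, starting from the substantial components at level $h_{\textnormal{as}}(1-2\varepsilon)$ produced in Theorem \ref{P:SUBST_COMP}. First one restricts attention to the event that, simultaneously for each of the five hypercubes $\mathbb{H}_{x'}$, $x'\in B_1^{(2)}(0,1)$, the associated bad set (see \eqref{EQ:event_E}) has cardinality $\le 2^d e^{-c_4'd^\varepsilon}$; by translation invariance and Corollary \ref{C:number_subst_comps} this holds with probability $\ge 1-ce^{-c'd^\varepsilon}$. On this event most vertices of $\mathbb{H}_{x'}$ are contained in or neighbor a substantial component of $\mathbb{H}_{x'}\cap E_\varphi^{\ge h_{\textnormal{as}}(1-2\varepsilon)}$, so that \emph{any} connected set $C_{x'}\subseteq E_\varphi^{\ge h}\cap\mathbb{H}_{x'}$ (with $h\le h_{\textnormal{as}}(1-2\varepsilon)$) that contains all these substantial components automatically satisfies $|\overline{C_{x'}}^{\mathbb{H}_{x'}}|\ge(1-e^{-c_4'd^\varepsilon})2^d\ge(1-d^{-2})2^d$, i.e. is a giant component. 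It thus suffices to show: (i) with probability $\ge 1-ce^{-c'd^\varepsilon}$, for each $x'$ all substantial components of $\mathbb{H}_{x'}\cap E_\varphi^{\ge h_{\textnormal{as}}(1-2\varepsilon)}$ are connected within $\mathbb{H}_{x'}\cap E_\varphi^{\ge h_{\textnormal{as}}(1-5\varepsilon)}$, hence lie in a single giant component $C_{x'}$ at level $h_{\textnormal{as}}(1-5\varepsilon)$; and (ii) with probability $\ge 1-ce^{-c'd^\varepsilon}$, the $C_{x'}$, $x'\in B_1^{(2)}(0,1)$, become connected to each other within $E_\varphi^{\ge h_{\textnormal{as}}(1-8\varepsilon)}\cap\bigcup_{x'}\mathbb{H}_{x'}$.

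\textbf{Step (i): merging to a giant component, via isoperimetry.} If, inside some $\mathbb{H}_{x'}$, two substantial components lay in distinct clusters of $E_\varphi^{\ge h_{\textnormal{as}}(1-5\varepsilon)}\cap\mathbb{H}_{x'}$, there would be a set $A\subsetneq\mathbb{H}_{x'}$, a union of clusters of the level set, such that both $A$ and $\mathbb{H}_{x'}\setminus A$ contain a substantial component---hence have cardinality $\ge\lfloor d^\varepsilon/b\rfloor^{b-1}\ge cd^{11}$ by the choice \eqref{EQ:subst_comp_b} of $b$---while every vertex of $\mathbb{H}_{x'}$ adjacent to $A$ but outside $A$ has $\varphi<h_{\textnormal{as}}(1-5\varepsilon)$. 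By the (edge/vertex) isoperimetric inequality in the hypercube (Harper's theorem and its consequences from \cite{BL}, as recorded in \cite{SUB}, Lemma 4.3), the boundary of such an $A$ is large, so this forces a large, structured family of vertices to be ``closed'' at level $h_{\textnormal{as}}(1-5\varepsilon)$ while the substantial components stay ``open'' at the higher level $h_{\textnormal{as}}(1-2\varepsilon)$. The delicate point---and the main obstacle---is to bound the probability of this: a crude union bound over all admissible boundaries combined with a product estimate for the field fails, since these pivotal vertices need not be well spread out and are too strongly correlated. Instead one reveals $\varphi$ along the relevant collection of vertices one at a time using the domain Markov property (Lemma \ref{L:conds_exps}), controlling via the escape-probability bounds \eqref{return_probas1}, \eqref{c_0} the bounded conditional bias incurred at each step, so that each additional vertex is still ``suitably open at the sprinkled level'' with conditional probability bounded below by a fixed negative power of $d$; this is precisely the mechanism of Lemma \ref{L:giant_event_H}, and it yields for the bad configuration a probability $\le ce^{-c'd^\varepsilon}$ (in fact far smaller). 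A union bound over the five hypercubes completes (i).

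\textbf{Step (ii): connecting neighboring giants, and conclusion.} Granting the event of (i), each $C_{x'}$ has closure covering all but a $d^{-2}$-fraction of $\mathbb{H}_{x'}$. Hence, across the common facet of $\mathbb{H}_0$ and a neighbouring translate $\mathbb{H}_{x'}$ (which carries $2^{d-1}$ crossing edges), at least $(1-cd^{-2})2^{d-1}\ge c2^d$ crossing edges $\{u,v\}$ satisfy that $u$ lies in or neighbors $C_0$ and $v$ lies in or neighbors $C_{x'}$; for such an edge, the event $R_{u,v}=\{\varphi_u\ge h_{\textnormal{as}}(1-8\varepsilon),\ \varphi_v\ge h_{\textnormal{as}}(1-8\varepsilon)\}$ places $C_0$ and $C_{x'}$ in the same cluster of $E_\varphi^{\ge h_{\textnormal{as}}(1-8\varepsilon)}$. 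Each $R_{u,v}$ has probability bounded below by a fixed power of $d$ (by FKG and \eqref{GRV_basic_estimate}), and one must rule out that all of them fail---again a ``many weakly successful, strongly correlated trials cannot all fail'' statement, handled by the same sequential domain-Markov bookkeeping as in (i), giving failure probability $\le ce^{-c'd^\varepsilon}$. Intersecting the good event from Corollary \ref{C:number_subst_comps} with those from (i) and (ii) over all $O(1)$ relevant translates and facets, and applying a union bound, yields $\mathbb{P}[G_0^{h_{\textnormal{as}}(1-8\varepsilon)}]\ge 1-ce^{-c'd^\varepsilon}$, as asserted. Throughout, the hard part is converting the isoperimetric abundance of pivotal vertices into a probabilistic estimate despite the long-range dependence of the free field, i.e. the quantitative control of conditional biases underlying Lemma \ref{L:giant_event_H}.
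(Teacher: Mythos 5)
Your overall blueprint (isoperimetry to glue substantial components into a giant one at a sprinkled level, then a second sprinkling to connect neighbouring giants across a facet, both controlled by the sequential domain–Markov conditioning of Lemma \ref{L:giant_event_H}) correctly identifies the skeleton of the paper's argument, and your Step (ii) is essentially the paper's second half. However, Step (i) contains a genuine gap.

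You assert that with probability $\geq 1-ce^{-c'd^\varepsilon}$, \emph{all} substantial components of $\mathbb{H}\cap E_\varphi^{\geq h_{\text{as}}(1-2\varepsilon)}$ merge into a single cluster at level $h_{\text{as}}(1-5\varepsilon)$, and you propose to prove this by applying the isoperimetric machinery to a partition $\{A,\mathbb{H}\setminus A\}$ with $\min(|A|,|\mathbb{H}\setminus A|)\geq c d^{11}$. This does not work, and the paper deliberately does not prove this stronger statement. The isoperimetric result quoted (\cite{SUB}, Lemma 4.3, reproduced in \eqref{EQ:isoperimetry_H_1}) requires both classes to satisfy $|K|\wedge|K'|\geq d^{-4}|\mathbb{H}|$ — i.e., polynomial fractions of $2^d$ — in order to force $m'\geq cd^{-6}|\mathbb{H}|$ bridges, and this lower bound is what beats the $2^{d^{-10}|\mathbb{H}|}$ entropy in \eqref{EQ:giant_partitions}. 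If $K$ is a \emph{single} substantial component ($|K|\asymp d^{11}$), there is no such guarantee: although $|B_\varepsilon|$ being small ensures every non-bad site in $\mathbb{H}$ lies in $\overline{K}^{\mathbb H}\cup\overline{K'}^{\mathbb H}$, a point $y\in\partial_{\text{out}}K$ is automatically in $\overline{K}^{\mathbb H}$ (and non-bad, since it neighbours $K$), but nothing forces $y$ to lie in $\partial_{\text{out}}K'$ as well. In other words, $K$ may sit strictly inside a ``buffer'' of sites that neighbour only $K$ and so are outside $\overline{K'}^{\mathbb H}$, in which case there are \emph{no} pivotal edges of the type used in \eqref{EQ:isoperimetry_H_1}; the constraint from $B_\varepsilon$ is useless in the lopsided regime. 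Consequently the claim ``this is precisely the mechanism of Lemma \ref{L:giant_event_H}'' overstates that lemma: it only rules out a balanced cut of the substantial components (the event $\mathcal{H}$ of \eqref{EQ:giant_event_H}, restricted to $\mathbf{P}(S)$).

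The fix in the paper is a separate combinatorial step: define the equivalence relation $\sim$ under connection at level $h_{\text{as}}(1-5\varepsilon)$, let $J_{\max}$ be the class whose aggregate $\bigcup_{j\in J_{\max}}S_j$ is largest, and use $\mathcal{H}$ together with $|S|\geq 4d^{-4}|\mathbb{H}|$ to show that \emph{the complement of $J_{\max}$ has aggregate cardinality $<d^{-4}|\mathbb{H}|$} (equations \eqref{EQ:giant_comb_3}--\eqref{EQ:giant_comb_4}). One then bounds $|\overline{\bigcup_{j\in J_{\max}}S_j}^{\mathbb{H}}|\geq|\mathbb{H}|-|B_\varepsilon|-d\cdot d^{-4}|\mathbb{H}|\geq(1-d^{-2})|\mathbb{H}|$, which is exactly the giant-component criterion. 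Thus the paper's $C_{x'}$ is allowed to miss a small (but possibly nonempty) set of substantial components, and the argument consciously avoids proving your all-merge claim. Replacing your ``all substantial components are connected'' by ``the substantial components outside some equivalence class have aggregate size $<d^{-4}|\mathbb{H}|$'' (together with the $J_{\max}$ extraction) would repair the proposal.
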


\begin{proof}
Fix $\varepsilon \in (0, 1/3)$. We start by showing
\begin{equation} \label{EQ:giant_comp}
\mathbb{P}[E_\varphi^{\geq h_{\text{as}}(1-5\varepsilon)} \cap \mathbb{H}_x \text{ contains a giant component}] \geq  1- c e^{-c'd^{\varepsilon}},
\end{equation}
for all $d\geq 3$ and $x\in \mathbb{Z}^2$. By translation invariance, it suffices to consider the case $x=0$. We denote by $S= (S_1,\dots, S_{N_{\text{subst}}})$ (with $S = \emptyset$ if $N_{\text{subst}}=0$) the collection of substantial components of $\mathbb{H} \cap E_{\varphi}^{\geq h_{\text{as}}(1-2\varepsilon)}$ (we assume for sake of definiteness that $S_1,\dots, S_{N_{\text{subst}}}$ are enumerated according to a specified procedure, e.g. using the lexicographic order induced by the points closest to the origin in each component). With a slight abuse of notation, $S$ will also be used to denote the set $\bigcup_{i=1}^{N_{\text{subst}}} S_i$, but the meaning will always be clear from the context. By definition, see \eqref{EQ:subst_comp_b}, the random sets $S_i$ satisfy $|S_i| \geq d^{10}$, for all $1\leq i \leq N_{\text{subst}}$, whenever $d \geq c$, and $\overline{S}_i^{ \mathbb{H}} \cap S_j = \emptyset$ for all $i \neq j$. We define the following set of partitions of $S$,
\begin{equation*}
\begin{split}
\mathbf{P}(S) = \
&\Big\{ \{K,K'\} ; \; K = \bigcup_{i \in I} S_i, \; K' = \bigcup_{i \in \{ 1,\dots,N_{\text{subst}}\} \setminus I}
 S_i \text{, for} \\ 
 &\text{some $I \subset \{ 1,\dots, N_{\text{subst}} \}$, and } |K| \wedge |K'| \geq d^{-4}|\mathbb{H}| \Big\}.
 \end{split} 
\end{equation*}
Since the number $N_{\text{subst}}$ of substantial components of $\mathbb{H}\cap E_{\varphi}^{\geq h_{\text{as}}(1-2\varepsilon)}$ is bounded by $d^{-10}|\mathbb{H}|$, when $d \geq c$, the (random) partition set  $\mathbf{P}(S)$ satisfies
\begin{equation} \label{EQ:giant_partitions}
|\mathbf{P}(S)|  \leq 2^{d^{-10}|\mathbb{H}|}, \text{ for all $d \geq c$}. 
\end{equation}
The main step towards proving \eqref{EQ:giant_comp} will consist of showing that the event 
\begin{equation}\label{EQ:giant_event_H}
\mathcal{H} = \bigcap_{\{ K,K'\} \in \mathbf{P}(S)} \{ K \stackrel{ \geq h_\text{as}(1-5\varepsilon)}{\longleftrightarrow} K' \}
\end{equation}
(with the convention that $\mathcal{H} $ is the whole space $\mathbb{R}^{\mathbb{Z}^d}$ whenever  $\mathbf{P}(S) = \emptyset$) occurs with sufficiently high probability. In words, $\mathcal{H}$ is the event that for any partition of the substantial components of $\mathbb{H} \cap E_{\varphi}^{\geq h_{\text{as}}(1-2\varepsilon)}$ into two ``sizeable'' classes (in the sense that the cardinality of the respective aggregate unions of substantial components is at least $d^{-4}|\mathbb{H}|$), one can find a substantial component in each class such that the two are connected in $E_\varphi^{\geq h_\text{as}(1-5\varepsilon)}$.

\begin{lemma} \label{L:giant_event_H} $(d\geq 3)$
\begin{equation} \label{EQ:giant_event_H_bound}
\mathbb{P}[\mathcal{H}] \geq 1- c e^{-c'd^{\varepsilon}}.
\end{equation}
\end{lemma}
\begin{proof6}
Let $\Pi \subset 2^\mathbb{H}$ denote the set of singletons and nearest-neighbor edges in $\mathbb{H}$, i.e. if $U \in \Pi$, then either $U=\{ x \}$ for some $x \in \mathbb{H}$ or $U = \{ x,y \}$ with $x,y \in \mathbb{H}$ and $x \sim y$. For  $U \in \Pi$, we define 
\begin{equation} \label{EQ:giant_E_U}
\mathcal{G}_U = \big\{ \sup_{x \in U} \big|\mu_x^{U^c} \big| < \varepsilon h_{\text{as}}  \big\}, 
\end{equation}
with $\mu_x^{U^c} =   \sum_{z \in U^c} P_x [H_{U^c} < \infty, X_{H_{U^c}} = z] \varphi_z$, for $x \in U$, as in \eqref{mu}. Since a non-vanishing contribution to the previous sum arises only from the points in $\partial_{\text{out}}U$, and since $|U| \leq 2$, Lemma \ref{L:zeta_i_bound} applies (with $\ell=4$ and $K,U$ appearing therein both equal to $\partial_{\text{out}}U$ here), thus yielding
\begin{equation}\label{EQ:giant_E_U_bound}
\mathbb{P}[\mathcal{G}_U^c] \leq e^{-c d \log d}, \text{ for all $U \in \Pi$, $d \geq 3$}.
\end{equation}
We also introduce the (good) event
\begin{equation} \label{EQ:giant_good_event}
\mathcal{G} = \{ |B_{\varepsilon}| \leq |\mathbb{H}|e^{-c_4'd^\varepsilon} \} \cap \bigcap_{U \in \Pi} \mathcal{G}_U, 
\end{equation}
(recall \eqref{EQ:event_E} for the definition of $B_{\varepsilon}$). On account of \eqref{EQ:number_subst_comps} and \eqref{EQ:giant_E_U_bound}, a union bound yields, for all $d \geq 3$,
\begin{equation} \label{EQ:giant_good_event_bound}
\mathbb{P}[\mathcal{G}^c] \leq ce^{-c' d^{\varepsilon}} + (1+ d)|\mathbb{H}| \cdot e^{-c'' d \log d} \leq c'''e^{-c' d^{\varepsilon}}.
\end{equation}
It will be convenient to specify configurations of the level set above $h_{\text{as}}(1-2\varepsilon)$ in the hypercube. Thus, given $K_+ \subset \mathbb{H}$, we abbreviate $\mathcal{C}(K_+) = \{ E_\varphi^{\geq h_{\text{as}}(1-2\varepsilon)} \cap \mathbb{H} = K_+ \}$, and write
\begin{equation}\label{EQ:giant_event_H_bound1}
\begin{array}{rcl}
\mathbb{P}[\mathcal{H}^c] \hspace{-1ex} & \leq & \hspace{-1ex} \displaystyle \mathbb{P}[\mathcal{G}^c] + \sum_{K_+ \subset \mathbb{H}} \mathbb{P}[\mathcal{G}, \; \mathcal{H}^c, \; \mathcal{C}(K_+)]   \\
\hspace{-1ex} & \stackrel{\eqref{EQ:giant_event_H}}{\leq} & \hspace{-1ex} \mathbb{P}[\mathcal{G}^c] + \displaystyle \sum_{K_+ \subset \mathbb{H}} |\mathbf{P}(S_{K_+})| \sup_{\{K,K'\} \in \mathbf{P}(S_{K_+})} \mathbb{P}[\mathcal{G}, K \nleftrightarrow K' \text{ in $E_\varphi^{\geq h_\text{as}(1-5\varepsilon)}$}, \, \mathcal{C}(K_+)] \\ 
\hspace{-1ex} & \stackrel{\eqref{EQ:giant_partitions}}{\leq} & \hspace{-1ex} \mathbb{P}[\mathcal{G}^c] + \displaystyle 2^{d^{-10}|\mathbb{H}|} \sup_{K_+ \subset \mathbb{H}} \; \sup_{\{K,K'\} \in \mathbf{P}(S_{K_+})} \mathbb{P}[\mathcal{G}, K \nleftrightarrow K' \text{ in $E_\varphi^{\geq h_\text{as}(1-5\varepsilon)}$} \, | \, \mathcal{C}(K_+)],
\end{array}
\end{equation}
for all $d \geq c$, where the set $S_{K_+}$ in the second and third line refers to the (deterministic) family of substantial components associated to the configuration $\mathcal{C}(K_+)$. In order to  bound the conditional probability appearing on the right-hand side of \eqref{EQ:giant_event_H_bound1}, we rely on isoperimetry considerations for subsets of the hypercube by Bollob\'as and Leader \cite{BL}. Thus, let $K_+ \subset \mathbb{H}$ be such that $\mathbf{P}(S_{K_+})$ is non-empty and $\{K,K'\} \in \mathbf{P}(S_{K_+})$ be fixed. First, observe that $B_\varepsilon$ is a deterministic set under $\mathbb{P}[\, \cdot \, |\; \mathcal{C}(K_+)]$. By construction, 
\begin{equation} \label{EQ:isoperimetry_H_cond1}
{B}_\varepsilon \cup \overline{K}^{\mathbb{H}} \cup \overline{K'}^{\mathbb{H}} = \mathbb{H} \text{ and } K \cap \overline{K'}^{\mathbb{H}} = K' \cap \overline{K}^{\mathbb{H}} = \emptyset
\end{equation}
(recall that $ \overline{K}^{\mathbb{H}}$ denotes the $\ell^1$-closure of $K$ in $\mathbb{H}$).
Moreover, by definition of $\mathbf{P}(S_{K_+})$, and on the event $\mathcal{G}$, cf. \eqref{EQ:giant_good_event},
\begin{equation} \label{EQ:isoperimetry_H_cond2}
|K| \wedge |K'| \geq d^{-4}|\mathbb{H}| \text{ and } |B_\varepsilon| \leq e^{-c_4' d^{\varepsilon}}|\mathbb{H}|.
\end{equation}
On account of \eqref{EQ:isoperimetry_H_cond1} and \eqref{EQ:isoperimetry_H_cond2}, Lemma 4.3 in \cite{SLB} (itself a consequence of Corollary 4 in \cite{BL}) yields that for all $d \geq c$, 
\begin{equation}\label{EQ:isoperimetry_H_1}
\begin{split}
&\text{there exist disjoint subsets $U'_1,\dots, U'_{m'}$ in $ \Pi \cap (\mathbb{H}\setminus B_\varepsilon)$ with $m' \geq c d^{-6}|\mathbb{H}|$} \\
&\text{such that for all $1 \leq k \leq m'$, either $U'_k = \{y_k\}$ with $y_k \in \partial_{\text{out}}K \cap \partial_{\text{out}}K'$,} \\
&\text{or $U'_k = \{y_k, z_k \}$ with $y_k \in \partial_{\text{out}}K$, $z_k \in \partial_{\text{out}}K' $ (and $|y_k -z_k|_1 = 1$)}.
\end{split}
\end{equation}
Among the family $U'_k$, $1 \leq k \leq m'$, we may select $m \geq c d^{-7}|\mathbb{H}|$ sets $U_1, \dots , U_m$ satisfying the additional assumption
\begin{equation} \label{EQ:isoperimetry_H_2}
d_1(U_k, U_l) \geq 2, \text{ for all $1 \leq k< l \leq m$}.
\end{equation}
Now, if the event $\mathcal{G} \cap \{ K \nleftrightarrow K' \text{ in $E_\varphi^{\geq h_\text{as}(1-5\varepsilon)}$} \}$ occurs (conditionally on $\mathcal{C}(K_+)$), then by construction, the field value in at least one of the sites in each set $U_k$ cannot exceed $h_\text{as}(1-5\varepsilon)$, for otherwise $U_k$ forms a path connecting $K$ to $K'$ in the level set $E_\varphi^{\geq h_\text{as}(1-5\varepsilon)}$, cf. \eqref{EQ:isoperimetry_H_1}. Thus, setting $\mathcal{F}_k = \bigcup_{y\in U_k} \{ \varphi_y < h_\text{as}(1-5\varepsilon) \}$, for $1 \leq k \leq m$, and $\widetilde{\mathcal{G}}=  \bigcap_{k=1}^m \mathcal{G}_{U_k}$, which contains $\mathcal{G}$, see \eqref{EQ:giant_good_event}, we obtain, for all $d \geq c$,
\begin{equation} \label{EQ:giant_event_H_bound2}
\begin{split}
&\mathbb{P}[\mathcal{G}, K \nleftrightarrow K' \text{ in $E_\varphi^{\geq h_\text{as}(1-5\varepsilon)}$} \, | \, \mathcal{C}(K_+)] \\
&\qquad \leq \mathbb{P} [\widetilde{\mathcal{G}} ,  \mathcal{F}_k, 1\leq k \leq m \, | \, \mathcal{C}(K_+)] \\
& \qquad = \Big( \prod_{k=1}^m \mathbb{P} [ \mathcal{F}_k \, | \,  \widetilde{\mathcal{G}}, \,\mathcal{F}_l, k < l \leq m, \, \mathcal{C}(K_+) ] \Big) \cdot \mathbb{P}[\widetilde{\mathcal{G}} \, | \, \mathcal{C}(K_+)].
\end{split}
\end{equation}
Next, we consider a single factor $\mathbb{P} [ \mathcal{F}_k \, | \,  \widetilde{\mathcal{G}}, \,\mathcal{F}_l, k < l \leq m, \, \mathcal{C}(K_+) ]$ in this product, and show that it doesn't converge too rapidly (in terms of $d$) to $1$ as $d \to \infty$. By definition of $\mathcal{G}_{U_k}$, cf. \eqref{EQ:giant_E_U}, and on account of \eqref{EQ:isoperimetry_H_2}, the event $\widetilde{\mathcal{G}}$ is measurable with respect to the $\sigma$-algebra generated by $\varphi_z$,  $z \in \overline{\mathbb{H}} \setminus \big( \bigcup_{1\leq k \leq m} U_k \big)$, (recall that $\overline{\mathbb{H}}$ denotes the $\ell^1$-closure of $\mathbb{H}$), hence for every $1 \leq k \leq m$,
\begin{equation*}
\mathcal{D}_k = \{ \widetilde{\mathcal{G}}, \mathcal{F}_l, k < l \leq m, \varphi_{|_{K_+}} \geq h_{\text{as}}(1-2\varepsilon), \varphi_{|_{\mathbb{H} \setminus (K_+ \cup U_k)}} < h_{\text{as}}(1-2\varepsilon) \} \in \sigma(\varphi_z ; \; z \in \overline{\mathbb{H}} \setminus U_k)
\end{equation*}
(here and in what follows, we use the shorthand $\{ \varphi_{|_K} \in B\} = \bigcap_{x \in K} \{ \varphi_x \in B\}$, for any Borel set $B \subset \mathbb{R}$ and $K \subset \mathbb{Z}^d$). Since all elements $y \in U_k$ lie on the exterior boundary of a substantial component of $E_{\varphi}^{\geq h_{\text{as}}(1-2\varepsilon)}$,  we necessarily have that $\varphi_y < h_{\text{as}}(1-2\varepsilon)$. Thus, for all $1 \leq k \leq m$,
\begin{equation*}
\mathbb{P} [ \mathcal{F}_k \, | \,  \widetilde{\mathcal{G}}, \mathcal{F}_l, k < l \leq m, \mathcal{C}(K_+) ] =  1-  \frac{\mathbb{P} [ \,  \mathbb{P} [\varphi_{|_{U_k}} < h_{\text{as}}(1-2\varepsilon) , \mathcal{F}_k^c\, | \, (\varphi_x)_{x \in \overline{\mathbb{H}} \setminus U_k}], \, \mathcal{D}_k]}{\mathbb{P} [ \, \mathbb{P} [\varphi_{|_{U_k}} < h_{\text{as}}(1-2\varepsilon) \, | \, (\varphi_x)_{x \in \overline{\mathbb{H}} \setminus U_k}], \, \mathcal{D}_k]}.
\end{equation*}
Recall from \eqref{phi_cond_exps} that $\mathbb{P}[\, \cdot \, | \, (\varphi_x)_{ x \in \mathbb{Z}^d \setminus U_k}] = \widetilde{\mathbb{P}}[\widetilde{\varphi}^{k} +\mu^{U_k^c} \in \cdot \,]$, where we have abbreviated  $\widetilde{\varphi}^{U_k^c}$ by $ \widetilde{\varphi}^{k}$, which we view as a 2-dimensional Gaussian vector with covariance $g_{U_k}(\cdot, \cdot)$ under $\widetilde{\mathbb{P}}$. Thus, by definition of the event $\mathcal{G}_{U_k}$ in \eqref{EQ:giant_E_U}, we obtain, for all $1 \leq k \leq m$, all \textit{decreasing} events $A \subset \{0, 1\}^{U_k}$ and $h \in \mathbb{R}$, that $\mathbb{P}$-a.s. on \nolinebreak $\mathcal{G}_{U_k}$,
\begin{equation*}
\begin{split}
&\widetilde{\mathbb{P}}[A^{h - \varepsilon h_{\text{as}}}(\widetilde{\varphi}^{k})] \leq \widetilde{\mathbb{P}}[A^h(\widetilde{\varphi}^{k}+ |\mu^{U_k^c}|)]  \leq \mathbb{P}[A^h \, | \, (\varphi_x)_{ x \in \mathbb{Z}^d \setminus U_k}] \leq \widetilde{\mathbb{P}}[A^h(\widetilde{\varphi}^{k}- |\mu^{U_k^c}|)] \leq \widetilde{\mathbb{P}}[A^{h + \varepsilon h_{\text{as}}}(\widetilde{\varphi}^{k})] 
\end{split}
\end{equation*}
(cf. \eqref{EQ:event_A_Ah} for notation). We apply this separately to the numerator and denominator above. To do this, we note that $\mathcal{D}_k \subset \widetilde{\mathcal{G}} \subset \mathcal{G}_{U_k}$, for all $1 \leq k \leq m$, and that, despite not being decreasing, the event $\{ \varphi_{|_{U_k}} < h_{\text{as}}(1-2\varepsilon) \} \cap \mathcal{F}_k^c = \{h_{\text{as}}(1-5\varepsilon)  \leq \varphi_{|_{U_k}} < h_{\text{as}}(1-2\varepsilon) \}$ appearing in the numerator can be rewritten using $1\{ \varphi_{|_{U_k}} \in [h,h')\}  = 1\{ \varphi_{|_{U_k}} <h' \} - 1\{ \bigcup_{y \in U_k} \{ \varphi_{|_{U_k}} <h\} \}$, for all $h<h'$ (the events appearing on the right-hand side are both decreasing). All in all, we infer, setting $a = h_{\text{as}}(1-4\varepsilon)$ and $ b = h_{\text{as}}(1-3\varepsilon)$, that 
\begin{equation} \label{EQ:giant_event_H_bound_3}
\begin{split}
\mathbb{P} [ \mathcal{F}_k \, | \,  \widetilde{\mathcal{G}}, \mathcal{F}_l, k < l \leq m, \mathcal{C}(K_+) ] 
&\leq 1-\frac{\widetilde{\mathbb{P}}[\widetilde{\varphi}^{k}  \in [a,b)]\cdot \mathbb{P}[\mathcal{D}_k]}{ \widetilde{\mathbb{P}}[\widetilde{\varphi}^{k} < h_{\text{as}}(1-\varepsilon)] \cdot \mathbb{P}[\mathcal{D}_k]} \\
&\leq 1 -  \widetilde{\mathbb{P}}[\widetilde{\varphi}^{k}  \in [a,b)], \\
\end{split}
\end{equation} 
for all $1 \leq k \leq m$ (in writing $\widetilde{\mathbb{P}}[\widetilde{\varphi}^{k}  \in [a,b)]$, we obviously mean that both components of $\widetilde{\varphi}^{k}$ should lie in $[a,b)$). Moreover, letting $U_k = \{y_k, z_k \}$, and denoting by $\Phi$ the distribution function of a standard Gaussian variable, we can bound this probability as
\begin{equation*}
\begin{split}
\widetilde{\mathbb{P}}[\widetilde{\varphi}^{k}  \in [a,b)] &=  \widetilde{\mathbb{P}}\big[ \, \widetilde{\mathbb{P}}[a<\widetilde{\varphi}^{k}_{y_k} <b \, | \, \widetilde{\varphi}^{k}_{z_k} ], \,a<\widetilde{\varphi}^{k}_{z_k} <b \big] \\
&= \widetilde{\mathbb{E}}\Big[\Big(\Phi\Big(b - \frac{1}{2d}\widetilde{\varphi}^{k}_{z_k} \Big) - \Phi \Big(a - \frac{1}{2d} \widetilde{\varphi}^{k}_{z_k}\Big)\Big)1\{a<\widetilde{\varphi}^{k}_{z_k} <b \} \Big]\\
& \geq  (\Phi(b)-\Phi(a)) \cdot \widetilde{\mathbb{P}}[ a<\widetilde{\varphi}^{k}_{z_k} <b] \geq c \big((b - a) e^{- {b}^2/ 2 } \big)^2 \geq c' h_{\text{as}}^2 \, d^{-2} \geq d^{-2},
\end{split}
\end{equation*}
for all $d\geq c$ and $1 \leq k \leq m$ (here, the first inequality in the third line follows because the linear shift $\frac{1}{2d}\widetilde{\varphi}^{k}_{z_k}$ produced by conditioning on $\widetilde{\varphi}^{k}_{z_k}$ is in fact between, say, $0$ and $1$, when $d$ is large enough, since $\widetilde{\varphi}^{h}_{z_k} \in [a,b)$, while $a,b \to \infty$ as $d \to \infty$). Inserting this bound into \eqref{EQ:giant_event_H_bound_3}, and in view of \eqref{EQ:giant_E_U_bound}, \eqref{EQ:giant_event_H_bound2}, we obtain, for all $d \geq c$, $K_+ \subset \mathbb{H}$ and $\{ K, K' \} \in \mathbf{P}(S_{K_+})$,
\begin{equation*}
\mathbb{P}[\mathcal{G}, K \nleftrightarrow K' \text{ in $E_\varphi^{\geq h_\text{as}(1-3\varepsilon)}$} \; | \;\mathcal{C}(K_+)] \leq (1-d^{-2})^{c d^{-7} |\mathbb{H}|} \leq 2^{-cd^{-9}|\mathbb{H}|},
\end{equation*}
where we have also used that $m$ as appearing in \eqref{EQ:giant_event_H_bound2} is bounded from below by $c d^{-7} |\mathbb{H}|$, cf. above \eqref{EQ:isoperimetry_H_2}. Finally, going back to \eqref{EQ:giant_event_H_bound1}, and on account of \eqref{EQ:giant_good_event_bound}, we see that $\mathbb{P}[\mathcal{H}^c] \leq ce^{-c' d^\varepsilon}$, for all $d \geq c''$, and thus for all $d \geq 3$ by adjusting the constant $c$. This completes the proof of Lemma \ref{L:giant_event_H}. \hfill $\square$

\bigskip
\noindent
\end{proof6}
\noindent As we will now see, \eqref{EQ:giant_comp} follows from Lemma \ref{L:giant_event_H} by virtue of a counting argument. Specifically, it suffices to show that for all $d\geq c$,
\begin{equation}\label{EQ:giant_comb_1}
(\mathcal{H} \cap \{ |B_\varepsilon| \leq |\mathbb{H}|e^{-c_4' d^\varepsilon} \}) \subseteq \{ E_\varphi^{\geq h_{\text{as}}(1-5\varepsilon)} \cap \mathbb{H} \text{ contains a giant component} \},
\end{equation}
which, on account of \eqref{EQ:number_subst_comps} and \eqref{EQ:giant_event_H_bound}, implies \eqref{EQ:giant_comp}. We now show \eqref{EQ:giant_comb_1}. Recall that $S_1,\dots, S_{N_{\text{subst}}}$ are the (ordered) substantial components of $E_\varphi^{\geq h_{\text{as}}(1-2\varepsilon)} \cap \mathbb{H}$, and denote their union by $S$. By definition of $B_{\varepsilon}$, cf. \eqref{EQ:event_E}, 
\begin{equation*}
|S| = |\overline{S}^{\mathbb{H}}| - |\partial_{\text{out}}S \cap \mathbb{H}| \geq |\mathbb{H}\setminus B_\varepsilon| - d|S|,
\end{equation*}
and therefore certainly
\begin{equation}\label{EQ:giant_comb_2}
|S| \geq 4 d^{-4}|\mathbb{H}|, \text{ on the event $ \{ |B_\varepsilon| \leq |\mathbb{H}|e^{-c_4' d^\varepsilon} \}$, for $d\geq c$}.
\end{equation}
Consider the equivalence relation $\sim$ on the set $\{1,\dots, N_{\text{subst}}\}$ defined as $i \sim j$ if and only if $\{ S_i \stackrel{ \geq h_{\text{as}}(1-5\varepsilon)}{\longleftrightarrow} S_j \}$, and let $\pi$ be the (random) partition of $\{1,\dots, N_{\text{subst}}\}$ induced by its equivalence classes. Denoting by $J_{\text{max}}$ the equivalence class maximizing the quantity $\Big| \bigcup_{j\in J}  S_j \Big|$, for $J \in \pi$, we claim that on the event $\mathcal{H} \cap \{ |B_\varepsilon| \leq |\mathbb{H}|e^{-c_4' d^\varepsilon} \}$ and for $d\geq c$,
\begin{align}
& \Big| \bigcup_{j\in J_{\text{max}}}  S_j \Big| \geq d^{-4}|\mathbb{H}|, \label{EQ:giant_comb_3} \\
& \Big| \bigcup_{j\in  \{1,\dots, N_{\text{subst}} \} \setminus J_{\text{max}}}  S_j \Big| < d^{-4}|\mathbb{H}| \label{EQ:giant_comb_4}.
\end{align}
Indeed, if \eqref{EQ:giant_comb_3} did not hold, i.e. $\Big| \bigcup_{j\in J}  S_j \Big| < d^{-4}|\mathbb{H}|$ for all $J \in \pi$, on account of \eqref{EQ:giant_comb_2}, one could easily construct a set $I \subset \{ 1,\dots, N_{\text{subst}} \}$  obtained as union of some of the sets $J \in \pi$ with the property $ d^{-4}|\mathbb{H}| \leq \Big| \bigcup_{i\in I}  S_i \Big| < 2 d^{-4}|\mathbb{H}|$. But by \eqref{EQ:giant_comb_2}, this would yield, setting $K =  \bigcup_{i\in I}  S_i, $
$K' = \bigcup_{i\in  \{1,\dots, N_{\text{subst}} \} \setminus  I}  S_i$, that $\{ K,K' \} \in \mathbf{P}(S)$. Because $\mathcal{H}$ is assumed to occur, this would imply that one could find substantial components $S$ and $S'$ belonging to $K$ and $K'$ respectively, such that $\{ S \stackrel{ \geq h_{\text{as}}(1-5\varepsilon)}{\longleftrightarrow} S' \}$, thus contradicting the definition of $\sim$ (observe that by construction, $S$ and $S'$ belong to different equivalence classes). Hence \eqref{EQ:giant_comb_3} holds. Similarly, \eqref{EQ:giant_comb_3} implies \eqref{EQ:giant_comb_4}, for the joint occurrence of $\mathcal{H}$, \eqref{EQ:giant_comb_3} and $ \{ \big| \bigcup_{j\in  \{1,\dots, N_{\text{subst}} \} \setminus J_{\text{max}}}  S_j \big| \geq d^{-4}|\mathbb{H}| \}$ would also violate the definition of $\sim$. Having established \eqref{EQ:giant_comb_3} and \eqref{EQ:giant_comb_4}, we observe that on $\mathcal{H} \cap \{ |B_\varepsilon| \leq |\mathbb{H}|e^{-c_4' d^\varepsilon} \}$ and for $d\geq c$,
\begin{equation*}
\begin{split}
\Big|\overline{\bigcup_{j \in J_{\text{max}}} S_j}^{\mathbb{H}} \Big| &\geq |\mathbb{H}| - |B_\varepsilon| - \Big|\overline{\bigcup_{j \in  \{1,\dots, N_{\text{subst}} \} \setminus J_{\text{max}}} S_j}^{\mathbb{H}} \Big| \\
&\geq |\mathbb{H}| - |\mathbb{H}|e^{-c_4' d^\varepsilon} - d^{-3}|\mathbb{H}| \\
&\geq |\mathbb{H}|(1-d^{-2}).
\end{split}
\end{equation*}
Thus, on the event $\mathcal{H} \cap \{ |B_\varepsilon| \leq |\mathbb{H}|e^{-c_4' d^\varepsilon} \}$ and for $d\geq c$, the set $C = \bigcup_{j \in J_{\text{max}}} S_j$ forms a connected component of $E_\varphi^{\geq h_{\text{as}}(1-5\varepsilon)} \cap \mathbb{H}$ with $|C| \geq  |\mathbb{H}|(1-d^{-2})$. By definition, cf. \eqref{EQ:goodEventDef}, it is therefore a giant component of $E_\varphi^{\geq h_{\text{as}}(1-5\varepsilon)} \cap \mathbb{H}$. This yields \eqref{EQ:giant_comb_1}, which completes the proof of \eqref{EQ:giant_comp} (for $x = 0$, and thus all $x \in \mathbb{Z}^2$ by translation invariance).

\bigskip

\noindent With \eqref{EQ:giant_comp} at hand, we proceed with the proof of \eqref{EQ:giant_comps_conn}, which is similar, but simpler. For $x \in \mathbb{Z}^2$, we denote by $C_x$ the giant component of $E_\varphi^{\geq h_{\text{as}}(1-5\varepsilon)} \cap \mathbb{H}_x$, cf. \eqref{D:hypercube}, whenever it exists. By \eqref{EQ:goodEventDef},
\begin{equation*}
\begin{split}
\mathbb{P}[(G_0^{h_{\textrm{as}}(1-5\varepsilon)})^c] \leq \sum_{\substack{x \in \mathbb{Z}^2:}{|x|_1 \leq 1}} & (\mathbb{P}[\text{$C_{x}$ does not exist}] \\
&+ \mathbb{P}[\text{$C_0$, $C_{x}$ exist and } C_0 \nleftrightarrow C_{x} \text{ in $E_\varphi^{\geq h_{\text{as}}(1-8\varepsilon)}$}]).
\end{split}
\end{equation*}
On account of \eqref{EQ:giant_comp}, it thus suffices to prove that for all $d \geq 3$ and $x \in \mathbb{Z}^2$ with $|x|_1 = 1$,
\begin{equation} \label{EQ:giant_conn_1}
\mathbb{P}[\text{$C_0$, $C_{x}$ exist and } C_0 \nleftrightarrow C_{x} \text{ in $E_\varphi^{\geq h_{\text{as}}(1-8\varepsilon)}$}] \leq ce^{-c'd^\varepsilon}.
\end{equation}
For arbitrary $x \in \mathbb{Z}^2$ with $|x|_1 = 1$, let $\Pi_x$ denote the singletons and nearest-neighbor edges in $x + \{ 0,1 \}^{\mathbb{Z}^d}$, and denote by $\hat{\mathcal{G}} = \bigcap_{U \in \Pi_x} \{ \sup_{x \in U} |\mu_x^{U^c}| < \varepsilon h_{\text{as}}\}$, with $\mu_x^{U^c}$ as defined in \eqref{mu}. As in \eqref{EQ:giant_good_event_bound}, we have $\mathbb{P}[\hat{\mathcal{G}}^c] \leq c e^{-c' d^{\varepsilon}}$. Thus, we see that \eqref{EQ:giant_conn_1} follows at once if we show that 
\begin{equation} \label{EQ:giant_conn_2}
\sup_{K_+ \subset (\mathbb{H} \cup \mathbb{H}_x)} \mathbb{P}[\hat{\mathcal{G}}, \text{ $C_0$ and $C_{x}$ exist and } C_0 \nleftrightarrow C_{x} \text{ in $E_\varphi^{\geq h_{\text{as}}(1-8\varepsilon)}$}\, | \, \hat{\mathcal{C}}(K_+)] \leq ce^{-c'd^\varepsilon},
\end{equation}
for all $d\geq 3$, where $\hat{\mathcal{C}}(K_+) =  \{E_\varphi^{\geq h_{\text{as}}(1-5\varepsilon)} \cap (\mathbb{H} \cup \mathbb{H}_x) = K_+  \}$ specifies the configuration of $E_\varphi^{\geq h_{\text{as}}(1-5\varepsilon)}$ in $\mathbb{H} \cup \mathbb{H}_x$. Fix $x\in \mathbb{Z}^2$ with $|x|_1 = 1$ and $K_+ \subset \mathbb{H} \cup \mathbb{H}_x$. Letting $x + \{0,1 \}^{\mathbb{Z}^d} =  \mathbb{H}^0 \cup  \mathbb{H}^1$, with $ \mathbb{H}^0 = (x + \{0,1 \}^{\mathbb{Z}^d}) \cap \mathbb{H}$, $ \mathbb{H}^1 = (x + \{0,1 \}^{\mathbb{Z}^d}) \cap \mathbb{H}_x$, cf. \eqref{D:hypercube}, observe that, whenever $C_0$ and $C_{x}$ exist, the (disjoint) sets $\mathbb{H}^0 \setminus \overline{C}_0^{\mathbb{H}}$ and $\mathbb{H}^1 \setminus \overline{C}_{x}^{\mathbb{H}_{x}}$ each contain at most $d^{-2}|\mathbb{H}|$ elements. Moreover, the joint occurrence of $C_0$ and $C_{x}$ implies immediately that for all $d \geq 3$,
\begin{equation} \label{EQ:giant_conn_3}
\begin{split}
&\text{there exist disjoint sets $U_k = \{ y_k, z_k \} \subset (x + \{ 0,1 \}^{\mathbb{Z}^d})$, for $1 \leq k \leq m$,} \\
&\text{with $m \geq cd^{-1}( |\mathbb{H}|/2 - 2 d^{-2} |\mathbb{H}|) \ ( \geq c'd^{-1}|\mathbb{H}|)$, and such that} \\
&\text{$x_k \in \overline{C}_0^{\mathbb{H}} \cap \mathbb{H}^1$, $y_k \in \overline{C}_{x}^{\mathbb{H}_{x}} \cap \mathbb{H}^2$, $|x_k -y_k|_1 =1$, for all $1 \leq k \leq m$} \\
&\text{and $d_1(U_k,U_l) \geq 2$ for $1\leq k< l \leq m$}.
\end{split}
\end{equation} 
For $U_k$ not to form a path joining $C_0$ and $C_{x}$ in the level set $E_\varphi^{\geq h_{\text{as}}(1-8\varepsilon)}$, at least one of the two sites in $U_k$ must have a field value smaller than $h_{\text{as}}(1-8\varepsilon)$, i.e. setting $\hat{\mathcal{F}}_k = \bigcup_{y \in U_k} \{ \varphi_y < h_{\text{as}}(1-8\varepsilon) \}$, we obtain
\begin{align*}
&\mathbb{P}[\hat{\mathcal{G}}, \text{ $C_0$ and $C_{x}$ exist and } C_0 \nleftrightarrow C_{x} \text{ in $E_\varphi^{\geq h_{\text{as}}(1-8\varepsilon)}$}\, | \, \hat{\mathcal{C}}(K_+)] \\
&\qquad \leq \mathbb{P}[\hat{\mathcal{G}}, \hat{\mathcal{F}}_k, 1 \leq k \leq m \, | \, \hat{\mathcal{C}}(K_+)].
\end{align*}
Now, on account of \eqref{EQ:giant_conn_3}, an analysis similar to the one below \eqref{EQ:giant_event_H_bound2} yields that this last quantity is bounded from above by $2^{-cd^{-3}|\mathbb{H}|}$, for all $d\geq c$, which is more than enough to deduce \eqref{EQ:giant_conn_2}. This completes the proof of Theorem \ref{T:giant_comp}.
 \end{proof}

\noindent We now conclude the proof of the lower bound.

\begin{proof7}
Let $\varepsilon \in (0,1/3)$. It follows from Theorem \ref{T:giant_comp} that $\mathbb{P} [ G_0^{h_{\textrm{as}}(1-8\varepsilon)}] \geq 1 - c e^{-c'd^{\varepsilon}}$, for all $d\geq 3$. In particular, $\limsup_{d\to \infty} d^{2+(3/10)} \mathbb{P} [ (G_0^{h_{\textrm{as}}(1-8\varepsilon)})^c] = 0$, and Theorem \ref{T:LOCAL_GLOBAL} yields that $h_*(d) \geq h_{\textrm{as}}(d)(1-9\varepsilon)$, for all $d\geq c \ (=c(\varepsilon))$. The claim \eqref{EQ:ASYMPTOTICS_LB} follows. Moreover, by construction, cf. \eqref{EQ:good_sites}, percolation then occurs in $\mathbb{H} + \mathbb{Z}^2$, which implies \eqref{EQ:ASYMPTOTICS_LB_COR}. \hfill $\square$
\end{proof7}

\medskip 

\begin{proof8}
The claim \eqref{EQ:ASYMPTOTICS_DENSITY} is a direct consequence of \eqref{GRV_basic_estimate} and the fact that $h_*(d) = h_{\text{as}}(d)(1 + o(1))$ as $d \to \infty$, which follows immediately from \eqref{EQ:ASYMPTOTICS_UB} and \eqref{EQ:ASYMPTOTICS_LB}. \hfill $\square$
\end{proof8}

\begin{remark} $\quad$ 

\medskip
\noindent1) A thorough review of the proofs reveals that the long-range dependence present in the model is a serious impediment and considerably hinders any efforts to obtain more precise results than the leading exponential order given by Theorem \ref{T:ASYMPTOTICS_DENSITY}.

\noindent 2) The heuristic paradigm requiring for this model to exhibit strong similarities to a corresponding one on the $(2d)$-regular tree is evidently inherent to many of the above proofs. Specifically, this behavior manifests itself when we investigate the local connectivity of the level set, essentially because the ``tree-like'' structure of the lattice determines the behavior of the random walk at short range, see the proofs of Lemma \ref{L:zeta_i_bound} and Theorem \ref{T:LOCAL_CB} (which crucially rely on \eqref{return_probas1}) for the upper bound, and the proof of Theorem \ref{P:SUBST_COMP} for the lower bound. Accordingly, we would like to compare \eqref{EQ:ASYMPTOTICS_DENSITY} to the critical density of the same model on this tree. We hope to come back to this point in the future. \hfill $\square$
\end{remark}

\noindent{\textit{Note added in proof:}} a recent preprint of Lupu \cite{Lu14}, see Theorem 3 therein, shows that for any $u>0$, there exists a coupling between random interlacements $\mathcal{I}^u$ at level $u$, see \cite{S1} for the definition, and the free field $\varphi$ such that $E_{\varphi}^{\geq \sqrt{2u}} \subseteq \mathcal{V}^u$, where $ \mathcal{V}^u = \mathbb{Z}^d \setminus  \mathcal{I}^u$ is the so-called vacant set (at level $u$). In particular, this readily implies that $h_{*} \leq \sqrt{2u_*}$, where $u_*$ denotes the critical parameter for interlacement percolation. Together with our lower bound \eqref{EQ:ASYMPTOTICS_LB}, this shows that $\liminf_{d \to \infty} u_*(d)/ \log(d) \geq 1$, which matches precisely the lower bound obtained by Sznitman in \cite{SLB}, and thus provides an alternative proof of this result. Similarly, the inequality $h_{**} \leq \sqrt{2u_{**}}$ and Sznitman's asymptotic upper bound on $u_{**}$, cf. \cite{SUB}, Theorem 0.1 and Remark 4.1, yield another proof of \eqref{EQ:ASYMPTOTICS_UB}.

\bigskip

\noindent{\textbf{Acknowledgements.}} A. D. and P.-F. R. would like to thank the Forschungsinstitut f\"ur Mathematik at ETH Z\"urich and the Columbia University Mathematics Department, respectively, for their hospitality and financial support. We are  grateful to R. Rosenthal, A. Sapozhnikov and A.-S. Sznitman for their comments on an earlier version of this paper. We also thank an anonymous referee for various remarks on the manuscript.

\end{document}